\pgfplotsset{compat=1.14}
\DeclareMathOperator{\at}{\bigg\vert}
\newcommand{\vb}[1]{\mathbf{#1}}
\DeclareMathOperator{\Dc}{\textrm{D}^\mathrm{curl}}
\DeclareMathOperator{\curl}{\mathrm{curl}_{\mathrm{2D}}}
\DeclareMathOperator{\di}{\mathrm{div}}
\DeclareMathOperator{\sym}{\mathrm{sym}}
\DeclareMathOperator{\skw}{\mathrm{skew}}
\DeclareMathOperator{\Le}{\mathit{L}^2}
\DeclareMathOperator{\Lez}{\mathit{L}^2_0}
\DeclareMathOperator{\Hone}{\mathit{H}^1}
\DeclareMathOperator{\Honez}{\mathit{H}_0^1}
\DeclareMathOperator{\Hc}{\mathit{H}(\mathrm{curl}}
\DeclareMathOperator{\Hcz}{\mathit{H}_0(\mathrm{curl}}
\DeclareMathOperator{\V}{\mathit{V}}
\DeclareMathOperator{\U}{\mathit{U}}
\DeclareMathOperator{\Q}{\mathit{Q}}
\DeclareMathOperator{\X}{\mathit{X}}
\DeclareMathOperator{\Pe}{\mathit{P}}
\DeclareMathOperator{\C}{\mathit{C}}
\newcommand{\dd}{\mathrm{d}}
\newcommand{\mue}{\mu_\mathrm{e}}
\newcommand{\mumi}{\mu_\mathrm{micro}}
\newcommand{\muma}{\mu_\mathrm{macro}}
\newcommand{\Lc}{L_\mathrm{c}}
\newtheorem{theorem}{Theorem}
\newtheorem{lemma}{Lemma}
\newtheorem{remark}{Remark}
\newcommand{\AS}[1]{{\color{black} #1}}
\let\@fnsymbol\@arabic
\crefname{Problem}{Problem.}{Problem.}
\title{A hybrid $\Hone \times \Hc)$ finite element formulation for a relaxed micromorphic continuum \AS{model of antiplane shear}
}
\author{\normalsize{Adam Sky}\thanks{Corresponding author: Adam Sky, Chair of Statics and Dynamics, Technische Universit\"at Dortmund, August-Schmidt-Str. 8, 44227 Dortmund, Germany, email: Adam.chejanovsky@tu-dortmund.de}
	, \quad
	\normalsize{Michael Neunteufel}\thanks{Michael Neunteufel, Institute for Analysis and Scientific Computing, Technische Universit\"at Wien, Wiedner Hauptstr. 8-10 , 1040 Wien, Austria, email: michael.neunteufel@tuwien.ac.at}
	, \quad
	\normalsize{Ingo M\"unch}\thanks{Ingo M\"unch, Head of Chair of Statics and Dynamics, Technische Universit\"at Dortmund, August-Schmidt-Str. 8, 44227 Dortmund, Germany, email: ingo.muench@tu-dortmund.de}
	, \quad
	\normalsize{Joachim Sch\"oberl}\thanks{Joachim Schöberl, Institute for Analysis and Scientific Computing, Technische Universit\"at Wien, Wiedner Hauptstr. 8-10 , 1040 Wien, Austria, email: joachim.schoeberl@tuwien.ac.at}
	\\
	and \quad
	\normalsize{Patrizio Neff}\thanks{Patrizio Neff,  \ \ Head of Chair for Nonlinear 
		Analysis and Modelling, Faculty of Mathematics, Universit\"{a}t Duisburg-Essen,
		Thea-Leymann Str. 9, 45127 Essen, Germany, email: patrizio.neff@uni-due.de}
}
\begin{document}

\maketitle

\begin{abstract}
One approach for the simulation of metamaterials is to extend an associated continuum theory concerning its kinematic equations, and the relaxed micromorphic continuum represents such a model. It incorporates
the Curl of the nonsymmetric microdistortion in the free energy function. This suggests the existence of
solutions not belonging to $\Hone$, such that standard nodal $\Hone$-finite elements yield unsatisfactory convergence
rates and might be incapable of finding the exact solution. Our approach is to use base functions stemming
from both Hilbert spaces $\Hone$ and $\Hc)$, demonstrating the central role of such combinations for this
class of problems. For simplicity, a reduced two-dimensional relaxed micromorphic continuum \AS{describing antiplane shear} is introduced,
preserving the main computational traits of the three-dimensional version. This model is then used for the
formulation and a multi step investigation of a viable finite element solution, encompassing examinations of
existence and uniqueness of both standard and mixed formulations and their respective convergence rates.
\\
\vspace*{0.25cm}
\\
{\bf{Key words:}}  relaxed micromorphic continuum, \and edge elements, \and N\'{e}d\'{e}lec elements, \and Curl based energy, \and mixed formulation, \and combined Hilbert spaces, \and metamaterials.  \\

\end{abstract}

\tableofcontents

\section{Introduction}
Materials with a pronounced microstructure such as metamaterials, see e.g. \cite{Madeo2018,Neff_low,AIVALIOTIS201999,d'Agostino2020}, porous media, composites etc., activate micro-motions which are not accounted for in classical continuum mechanics, where each material point is equipped with only three translational degrees of freedom. Therefore, several approaches to model such materials can be found in literature, such as multi-scale finite element methods \cite{multi,ABDULLE2006,Eidel2018} or generalized continuum theories. The latter can be classified into higher gradient theories \cite{highergradient,Kirchner2006,Mindlin1968,Neff2009} and so called micromorphic continuum theories \cite{Steigmann2012,Neff2007}. These theories extend the kinematics of the material point. Depending on the extension one obtains for example micropolar \cite{Munch2011,Jeong2010,Munch2017}, microstretch \cite{Romeo2020} or microstrain \cite{Forest2006,Hutter2016} theories. In its most general setting, as introduced by Eringen und Mindlin \cite{book, article}, a micromorphic continuum theory allows the material point to undergo an affine distortion independent of its macroscopic deformation arising from the displacement field. Consequently, in the micromorphic theory a material point is considered with $3 + 9 = 12$ degrees of freedom, of which the microdistortion $\bm{P}$ encompasses $9$. The various micromorphic theories differ in their proposition of the free energy functional. While classical theories incorporate the full gradient of the microdistortion $\nabla \bm{P}$ into the energy function \cite{Neff2014}, the relaxed micromorphic theory \cite{Neff2014,Neff2020,Neff_wave,Madeo2016} considers only $\text{Curl}\bm{P}$. The incorporation of the Curl of the microdistortion, formally known as the dislocation density, into the free energy functional relaxes the continuity assumptions on the microdistortion and enlarges the space of possible weak solutions, i.e. $[\Hone]^3 \times \mathit{H}(\text{Curl})$. Furthermore, the relaxed micromorphic theory aspires to capture the entire spectrum of mechanical behaviour between the macro and micro scale of the material. This is achieved via homogenization of the material parameters and the introduction of the characteristic length $\Lc$ \cite{Neff2019,Madeo2018}, which determines the influence of the dislocation density in the free energy functional.
\AS{Specific analytical solutions to the full isotropic relaxed micromorphic model are presented in \cite{Rizzi2021,rizzi2020analytical,rizzi2020analytical_2}.}

For non-trivial boundary value problems, solutions of continuum theories are approximated via the finite element method. While the standard Lagrange elements are well suited for solutions in $\Hone$, solutions in $\mathit{H}(\text{curl})$ may require a different class of elements, depending on the problem at hand. The lowest class of finite elements in $\mathit{H}(\text{curl})$, sometimes called edge elements, have been derived by N\'{e}d\'{e}lec $\cite{Nedelec1980, Ned2}$. Extensions to higher order element formulations can be found in \cite{Zaglmayr2006,Joachim2005,dem1,dem2}. In this paper we consider finite element formulations employing either $\Hone \times [\Hone]^2$ or $\Hone \times \Hc)$ and investigate their validity in correctly approximating results in the relaxed micromorphic continuum. Furthermore, we test both a primal and mixed formulation of the corresponding boundary problem for increasingly large values of the characteristic length $\Lc$. To that end, we consider a planar version of the relaxed micromorphic continuum, namely of antiplane shear \cite{Voss2020}. More precisely, the matrix-Curl in 3D reduces to a scalar-curl of the microdistortion in 2D. However, the results of our investigation directly apply to the \AS{full} three-dimensional version. 

The paper is organized as follows: In the following section we introduce the planar relaxed micromorphic continuum. \cref{ch:2} is devoted to prove solvability of the primal and mixed problem and discussing properties in the limit case $\Lc\to\infty$, in both the continuous and discrete settings, respectively. In \cref{ch:4} we present appropriate base functions for $\Hc)$, the corresponding covariant Piola transformation for N\'{e}d\'{e}lec finite elements and the resulting stiffness matrices. Finally, we present several numerical examples to confirm the theoretical results.

\section{The planar relaxed micromorphic continuum}
\label{intro}
The free energy functional of the relaxed micromorphic continuum \cite{Neff2019,Neff2014} incorporates the gradient of the displacement field, the microdistortion and its Curl
\begin{gather}
\begin{align} 
I(\vb{u}, \bm{P}) =  & \, \dfrac{1}{2} \int_{\Omega} \langle \mathbb{C}_{\textrm{e}} \sym(\nabla \vb{u} - \bm{P}) , \, \sym(\nabla \vb{u} - \bm{P}) \rangle
 +  \langle \mathbb{C}_{\textrm{micro}} \sym\bm{P} , \, \sym\bm{P} \rangle \notag \\ 
& + \langle \mathbb{C}_{\textrm{c}} \skw(\nabla \vb{u} - \bm{P}) , \, \skw (\nabla \vb{u} - \bm{P}) \rangle
 + \dfrac{\mu_\text{macro} \, \Lc^2}{2} \, \| \text{Curl}\bm{P} \|^2 - \langle \vb{f} , \, \vb{u} \rangle  - \langle \bm{M} , \, \bm{P} \rangle \, \dd X \, ,  
\end{align}
\label{eq:1} \\[2ex]
\nabla \vb{u} = \begin{bmatrix}
u_{1,1} & u_{1,2} & u_{1,3} \\
u_{2,1} & u_{2,2} & u_{2,3} \\
u_{3,1} & u_{3,2} & u_{3,3} 
\end{bmatrix}
\, , \quad
\text{Curl}\bm{P} = \begin{bmatrix}
	(\text{curl} \begin{bmatrix}
	P_{11} & P_{12} & P_{13} 
	\end{bmatrix})^T \\[1ex]
	(\text{curl} \begin{bmatrix}
	P_{21} & P_{22} & P_{23} 
	\end{bmatrix})^T \\[1ex]
	(\text{curl} \begin{bmatrix}
	P_{31} & P_{32} & P_{33} 
	\end{bmatrix})^T
	\end{bmatrix} \, , \quad 
\text{curl}\vb{v} = \nabla \times \vb{v} \, , 
\end{gather}
with $\vb{u} : \Omega \subset \mathbb{R}^3 \to \mathbb{R}^3$ and $\bm{P}: \Omega \subset \mathbb{R}^3 \to \mathbb{R}^{3 \times 3}$ representing the displacement and the non-symmetric microdistortion, respectively. Here, $\mathbb{C}_{\textrm{e}}$ and $\mathbb{C}_{\textrm{micro}}$ are standard elasticity tensors and $\mathbb{C}_{\textrm{c}}$ is a positive semi-definite coupling tensor for rotations. The macroscopic shear modulus is denoted by $\mu_\text{macro}$ and the parameter $\Lc \geq 0$ represents the characteristic length scale motivated by the microstructure.

From now on, we consider the planar reduction of this continuum \AS{to antiplane shear, still} capturing the main mathematical aspects of the three-dimensional version, namely the additional microdistortion and the curl
\begin{gather}
I (u,\bm{\zeta}) = \int_{\Omega}  \mue \|\nabla u - \bm{\zeta}\|^2 + \mumi \|\bm{\zeta}\|^2  + \muma \dfrac{\Lc^2}{2}  \|\curl\bm{\zeta}\|^2 - \langle u , \, f \rangle - \langle \bm{\zeta} , \, \bm{\omega} \rangle\, \dd X \, , \quad \Omega \subset \mathbb{R}^2 \, ,\label{eq:2} 
\end{gather}
where we employ the two-dimensional definitions of the curl and gradient operators
\begin{align}
\curl \bm{\zeta} = \zeta_{2,1} - \zeta_{1,2} \, , \quad \bm{\zeta} \in \mathbb{R}^2 \, , \qquad \Dc (u) = \begin{bmatrix}
	u_{,2} \\ -u_{,1}
\end{bmatrix} \, ,  u \in \mathbb{R} \, , \qquad \nabla u = \begin{bmatrix}
u_{,1} \\ u_{,2}
\end{bmatrix} \, , \quad u \in \mathbb{R} \, .
\end{align}

In \cref{eq:2} we reduced the displacement to a scalar field $u:\Omega \subset \mathbb{R}^2 \to \mathbb{R}$ and the microdistortion $\bm{P}$ to a vector field $\bm{\zeta} : \Omega \subset \mathbb{R}^2 \to \mathbb{R}^2$. \AS{The displacement field $u$ is now perpendicular to the plane of the domain}. The elasticity tensors $\mathbb{C}_\text{e}$ and $\mathbb{C}_\text{micro}$ are replaced by the scalars $\mue, \, \mumi > 0$ and $\mathbb{C}_\text{c}$ no longer appears.
\AS{\begin{remark}
		The simplification of the model to antiplane shear serves to facilitate the mathematical analysis of the model and allows for a thorough investigation of the numerical behaviour of finite element solutions in the relaxed micromorphic theory. Whether this \textbf{reduced} model can be applied to real-world metamaterials is unclear at this time. For applications of the \textbf{full} three-dimensional theory see \cite{Madeo2018, Madeo2016}.
\end{remark}} 
In order to find functions minimizing the potential energy $I$ we calculate the variations with respect to $u$ and $\bm{\zeta}$
\begin{subequations}
	\begin{align}
	&\int_{\Omega}  2 \mue \langle (\nabla u - \bm{\zeta}) , \, \nabla \delta u \rangle \,  \dd X = \int_{\Omega}   \langle \delta u , \, f \rangle \, \dd X \, , \label{eq:a} \\
	&\int_{\Omega}  2\mue \, \langle (\nabla u - \bm{\zeta}) , \, (-\delta \bm{\zeta}) \rangle + 2\mumi \,  \langle \bm{\zeta} , \, \delta \bm{\zeta} \rangle + \muma \, \Lc ^2 \, \langle \curl\bm{\zeta} , \, \curl\delta \bm{\zeta} \rangle \, \dd X = \int_{\Omega}   \langle \delta \bm{\zeta} , \, \bm{\omega} \rangle \, \dd X \, .
	\label{eq:3} 
	\end{align}
\end{subequations}
Partial integration of \cref{eq:a} and \cref{eq:3} yields the strong form including boundary conditions (see \cref{ap:B} for more details)
\begin{subequations}
\label{eq:strong}
\begin{align}
-2\mue\di(\nabla u - \bm{\zeta}) & = f &&\text{ in } \Omega \, ,  \label{eq:strong1} \\
-2\mue (\nabla u - \bm{\zeta}) + 2\mumi \bm{\zeta} + \muma \Lc^2 \, \Dc(  \curl\bm{\zeta} )  & =  \bm{\omega} &&\text{ in } \Omega  \, , \label{eq:strong2}\\
u & = \widetilde{u} && \text{ on } \Gamma_D^u\, ,\\
\langle \bm{\zeta} , \, \bm{\tau} \rangle & = \langle \widetilde{\bm{\zeta}} , \,  \bm{\tau} \rangle &&  \text{ on } \Gamma_D^{\zeta}\,, \label{eq:consisten}\\
\langle \nabla u , \, \bm{\nu} \rangle & = \langle \bm{\zeta} , \, \bm{\nu} \rangle &&  \text{ on } \Gamma_N^u\,, \label{eq:neumann}\\
\curl\bm{\zeta} & =0 && \text{ on }\Gamma_N^\zeta \,, \label{eq:cur}
\end{align}  
\end{subequations}
where $\bm{\tau}$ and $\bm{\nu}$ denote the outer tangent and normal vector on the boundary, see \cref{fig:domian}, and with $\widetilde{u}$ and $\widetilde{\bm{\zeta}}$ the displacement and microdistortion fields on $\Gamma_D^u$ and $\Gamma_D^\zeta$ are prescribed.
From a mathematical point of view, it is possible to prescribe the tangential components of the microdistortion $\bm{\zeta}$ on the boundary $\Gamma_D^\zeta$. This is used to test our numerical formulation in \cref{ch:7}.
However, from the point of view of physics it is impossible to control the microdistortion of the continuum with no direct relation to the displacement $u$
and as such, the \textbf{consistent coupling condition} $\langle \bm{\zeta} , \, \bm{\tau} \rangle = \langle \nabla \widetilde{u} , \, \bm{\tau} \rangle$ arises on the Dirichlet boundary, being common to both $u$ and $\bm{\zeta}$, enforcing the condition $\Gamma_D^{\zeta}\subset\Gamma_D^u$. Furthermore, Dirichlet boundary data for the microdistortion $\bm{\zeta}$ are not required for the existence of a unique solution here, as coercivity in the appropriate spaces is still determined.
\begin{figure}
	\centering
	\definecolor{ududff}{rgb}{0.30196078431372547,0.30196078431372547,1.}
	\definecolor{ffqqff}{rgb}{1.,0.,1.}
	\definecolor{qqqqff}{rgb}{0.,0.,1.}
	\definecolor{qqzzcc}{rgb}{0.,0.6,0.8}
	\begin{tikzpicture}[scale = 1][line cap=round,line join=round,>=triangle 45,x=1.0cm,y=1.0cm]
	\clip(-2.2,5.1) rectangle (11,9);
	\draw [shift={(7.752719419156214,6.940151305514022)},line width=2.pt]  plot[domain=-1.1063259343647651:1.395759665287226,variable=\t]({1.*1.0428913505061734*cos(\t r)+0.*1.0428913505061734*sin(\t r)},{0.*1.0428913505061734*cos(\t r)+1.*1.0428913505061734*sin(\t r)});
	\draw [shift={(4.49857469685139,14.245386765916496)},line width=2.pt,dash pattern=on 2pt off 2pt,color=qqzzcc]  plot[domain=4.38530463822533:5.136692472017892,variable=\t]({1.*9.039185235700698*cos(\t r)+0.*9.039185235700698*sin(\t r)},{0.*9.039185235700698*cos(\t r)+1.*9.039185235700698*sin(\t r)});
	\draw [shift={(2.5378976991138265,7.679147499509712)},line width=2.pt,color=qqzzcc]  plot[domain=3.3948617433976085:4.2703956526624305,variable=\t]({1.*2.2056806476805906*cos(\t r)+0.*2.2056806476805906*sin(\t r)},{0.*2.2056806476805906*cos(\t r)+1.*2.2056806476805906*sin(\t r)});
	\draw [shift={(1.3889089532431642,7.412102085635455)},line width=2.pt,color=qqqqff]  plot[domain=1.0825549335295008:3.423473403538445,variable=\t]({1.*1.0268530941355785*cos(\t r)+0.*1.0268530941355785*sin(\t r)},{0.*1.0268530941355785*cos(\t r)+1.*1.0268530941355785*sin(\t r)});
	\draw [shift={(3.9595594229361315,11.434728967844318)},line width=2.pt,dash pattern=on 4pt off 4pt,color=qqqqff]  plot[domain=4.121766213863017:4.931963289100204,variable=\t]({1.*3.7512335218833015*cos(\t r)+0.*3.7512335218833015*sin(\t r)},{0.*3.7512335218833015*cos(\t r)+1.*3.7512335218833015*sin(\t r)});
	\draw [shift={(4.49857469685139,14.245386765916496)},line width=2.pt,dash pattern=on 4pt off 4pt]  plot[domain=4.38530463822533:5.136692472017892,variable=\t]({1.*9.039185235700698*cos(\t r)+0.*9.039185235700698*sin(\t r)},{0.*9.039185235700698*cos(\t r)+1.*9.039185235700698*sin(\t r)});
	\draw [shift={(6.673267623030113,2.6856616638565862)},line width=2.pt,color=ffqqff]  plot[domain=1.3364120818142247:1.927613934165801,variable=\t]({1.*5.429913001008422*cos(\t r)+0.*5.429913001008422*sin(\t r)},{0.*5.429913001008422*cos(\t r)+1.*5.429913001008422*sin(\t r)});
	\draw (8.89,7.118323494687127) node[anchor=north west] {$\Gamma_D^u$};
	\draw [color=qqzzcc](0.1,6.45) node[anchor=north west] {$\Gamma_D^\zeta$};
	\draw [color=ududff](0.4,9) node[anchor=north west] {$\Gamma_N^\zeta$};
	\draw [color=ffqqff](7.0453915781188545,8.661023219205033) node[anchor=north west] {$\Gamma_N^u$};
	\draw [->,line width=2.pt] (5.953519343975546,8.067661070519344) -- (5.834656954071715,8.894905719780734);
	\draw [->,line width=2.pt] (5.953519343975546,8.067661070519344) -- (5.140880177239844,7.950897264825448);
	\draw (4.32205430932704,6.69329397874852) node[anchor=north west] {$\Omega$};
	\draw (5.4,7.9) node[anchor=north west] {$\boldsymbol{\tau}$};
	\draw (6,8.6) node[anchor=north west] {$\boldsymbol{\nu}$};
	\draw [shift={(3.9595594229361315,11.434728967844318)},line width=2.pt,dash pattern=on 2pt off 2pt,color=ffqqff]  plot[domain=4.121766213863017:4.931963289100204,variable=\t]({1.*3.7512335218833015*cos(\t r)+0.*3.7512335218833015*sin(\t r)},{0.*3.7512335218833015*cos(\t r)+1.*3.7512335218833015*sin(\t r)});
	\end{tikzpicture}
	\caption{Outer tangent $\bm{\tau}$ and normal vector $\bm{\nu}$ on the boundary of the domain $\Omega$.}
	\label{fig:domian}
\end{figure}
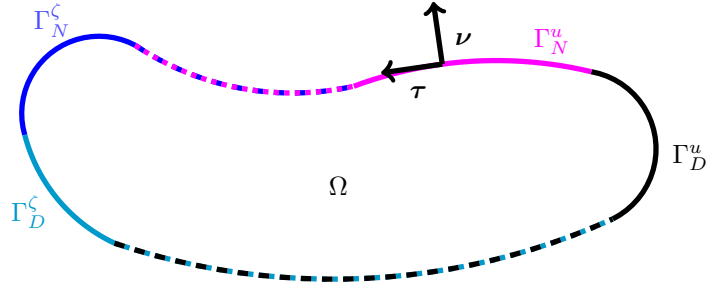

\section{Solvability and limit problems} \label{ch:2}
\subsection{Continuous case} 
In this section we prove the existence and uniqueness of the weak form of the planar relaxed micromorphic continuum. Further, the corresponding mixed formulation is presented, whose coercivity constant is independent of $\Lc$. Finally, we study necessary and sufficient conditions such that $\nabla u=\bm{\zeta}$ is guaranteed in the limit $\Lc\to\infty$. For simplicity, we assume homogeneous Dirichlet conditions on the entire boundary throughout this section, i.e., $u=0$ and $\langle \bm{\zeta} , \, \bm{\tau} \rangle =0$ on $\Gamma^u_D=\Gamma^{\zeta}_D=\partial \Omega$, and mention that the proof can be readily adapted for inhomogeneous and mixed boundary conditions as long as the Dirichlet boundary for the displacements is non-trivial, $|\Gamma^u_D|>0$, \cite{Neff_existence}.

We define the following Hilbert spaces and their respective norms
\begin{subequations}
	\begin{align}
	\Hone(\Omega) &= \{ u \in \Le(\Omega) \, | \,  \nabla u \in \Le(\Omega) ^2 \} \, , \quad \|u \|_{\Hone}^2 = \|u\|_{\Le}^2 + \| \nabla u \|_{\Le}^2 \,,\\
	\Honez(\Omega) &= \{ u \in \Hone(\Omega) \, | \, u=0 \text{ on }\partial \Omega \} \, ,\\
	\Hc,\Omega) &= \{ \bm{\zeta} \in \Le(\Omega)^2 \, | \curl\bm{\zeta} \in \Le(\Omega)   \} \, , \quad \| \bm{\zeta} \|_{\Hc)}^2 = \| \bm{\zeta} \|_{\Le}^2 + \| \curl\bm{\zeta} \|_{\Le}^2 \, ,\\
	\Hcz,\Omega) &= \{ \bm{\zeta} \in \Hc , \, \Omega) \, | \, \langle \bm{\zeta} , \, \bm{\tau} \rangle =0  \text{ on }\partial \Omega \} \, ,
	\end{align}
\end{subequations}
which are based on the Lebesgue norm and space
\begin{align}
\| u \|_{\Le}^2 = \int_{\Omega} \| u \|^2\, \dd X \,,\quad \Le(\Omega)=\{u:\Omega\to\mathbb{R}\,|\, \|u\|_{\Le}<\infty\}\, , \quad \Lez(\Omega)= \left \{u\in\Le(\Omega)\,|\, \int_{\Omega}u\,\dd X = 0 \right \} \, .
\end{align} 

Further, we use the product space $\X=\Honez(\Omega)\times \Hcz,\Omega)$ with the norm
\begin{align}
&\| \{u, \bm{\zeta}\} \|_{\X} = \|u \|_{\Hone} + \| \bm{\zeta} \|_{\Hc )} \, ,
\end{align}
to define the following minimization problem\footnote{Note carefully that $u$ and $\bm{\zeta}$ are two independent variables and lead to a minimization problem despite the resemblance to mixed formulations, i.e. saddle-point problems.}: Find $\{u,\bm{\zeta}\}\in X$ such that for all $\{\delta u,\delta \bm{\zeta}\}\in X$
\begin{align}
\underbrace{\int_{\Omega} 2\mue \langle (\nabla u - \bm{\zeta}) , \, (\nabla \delta u -\delta \bm{\zeta}) \rangle + 2\mumi \,  \langle \bm{\zeta} , \, \delta \bm{\zeta} \rangle + \muma \, \Lc ^2 \, \langle \curl\bm{\zeta} , \, \curl\delta \bm{\zeta} \rangle \, \dd X}_{\displaystyle =a(\{u,\bm{\zeta}\},\{\delta u,\delta\bm{\zeta}\})} = 
\int_{\Omega}   \langle \delta u , \, f \rangle + \langle \delta \bm{\zeta} , \, \bm{\omega} \rangle \, \dd X \, ,
\label[Problem]{eq:weak}
\end{align}
In order to show the existence of unique solutions we consider the Lax--Milgram theorem.  
\begin{theorem}
	\label{thm:primal_hcurl_solve}
	If $\mue, \mumi, \muma, \Lc > 0$, then \cref{eq:weak} has a unique solution $\{u,\bm{\zeta}\}\in X$ and there holds the stability estimate
	\begin{align*}
	\|\{u,\bm{\zeta}\}\|_X \leq \frac{1}{\beta}\Big(\|f\|_{\Le}+\|\bm{\omega}\|_{\Le}\Big)\,,
	\end{align*}
	with $\beta=\beta(\mue,\mumi,\muma,\Lc) > 0$ .
\end{theorem}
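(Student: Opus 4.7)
The plan is to apply the Lax--Milgram theorem on the Hilbert space $\X=\Honez(\Omega)\times \Hcz,\Omega)$ with the bilinear form $a$ and linear functional already identified in \cref{eq:weak}. Continuity of the right-hand side is an immediate consequence of Cauchy--Schwarz:
$\int_\Omega \langle \delta u, f\rangle + \langle \delta\bm{\zeta},\bm{\omega}\rangle \,\dd X \le (\|f\|_{\Le}+\|\bm{\omega}\|_{\Le})\|\{\delta u,\delta\bm{\zeta}\}\|_{\X}$. Continuity of $a$ on $\X\times \X$ also follows directly from Cauchy--Schwarz together with the elementary bound $\|\nabla u - \bm{\zeta}\|_{\Le}\le \|\nabla u\|_{\Le}+\|\bm{\zeta}\|_{\Le}$; the constant here depends on $\mue,\mumi,\muma,\Lc$.

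The main obstacle is \emph{coercivity}, because the $\mue$-term in $a(\{u,\bm{\zeta}\},\{u,\bm{\zeta}\})=2\mue\|\nabla u-\bm{\zeta}\|_{\Le}^2 + 2\mumi\|\bm{\zeta}\|_{\Le}^2+\muma \Lc^2\|\curl\bm{\zeta}\|_{\Le}^2$ controls only the \emph{difference} $\nabla u-\bm{\zeta}$, and not $\nabla u$ by itself. My plan is to split this term and absorb a small multiple of $\|\bm{\zeta}\|_{\Le}^2$ into the $\mumi$-term. Concretely, for any $\varepsilon>0$, Young's inequality gives
\begin{align*}
\|\nabla u\|_{\Le}^2 \le (1+\varepsilon)\|\nabla u -\bm{\zeta}\|_{\Le}^2 + (1+\tfrac{1}{\varepsilon})\|\bm{\zeta}\|_{\Le}^2,
\end{align*}
so that $2\mue\|\nabla u-\bm{\zeta}\|_{\Le}^2 \ge \tfrac{2\mue}{1+\varepsilon}\|\nabla u\|_{\Le}^2 - \tfrac{2\mue}{\varepsilon}\|\bm{\zeta}\|_{\Le}^2$. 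Choosing $\varepsilon>\mue/\mumi$ then yields
\begin{align*}
a(\{u,\bm{\zeta}\},\{u,\bm{\zeta}\}) \ge \tfrac{2\mue}{1+\varepsilon}\|\nabla u\|_{\Le}^2 + \bigl(2\mumi -\tfrac{2\mue}{\varepsilon}\bigr)\|\bm{\zeta}\|_{\Le}^2 + \muma\Lc^2 \|\curl\bm{\zeta}\|_{\Le}^2,
\end{align*}
with both coefficients strictly positive.

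To upgrade this into coercivity on the full $\X$-norm I would invoke the Poincar\'e--Friedrichs inequality on $\Honez(\Omega)$, $\|u\|_{\Le}\le C_P\|\nabla u\|_{\Le}$, which turns the first term into control of $\|u\|_{\Hone}^2$. The remaining two terms already combine to give control of $\|\bm{\zeta}\|_{\Hc)}^2$. Defining
\begin{align*}
\beta := \min\Bigl\{\tfrac{2\mue}{(1+\varepsilon)(1+C_P^2)},\; 2\mumi-\tfrac{2\mue}{\varepsilon},\; \muma \Lc^2\Bigr\} > 0
\end{align*}
yields $a(\{u,\bm{\zeta}\},\{u,\bm{\zeta}\})\ge \beta \|\{u,\bm{\zeta}\}\|_{\X}^2$. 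Lax--Milgram then provides existence and uniqueness, and the usual argument (test with the solution itself and apply Cauchy--Schwarz on the right-hand side) gives the stated stability estimate. I would also emphasise at the end that $\beta$ degenerates as $\Lc\to 0$, which motivates the $\Lc$-robust mixed reformulation treated next.
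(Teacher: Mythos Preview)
Your proposal is correct and follows essentially the same route as the paper: Lax--Milgram, with continuity from Cauchy--Schwarz, coercivity from a Young-inequality splitting of the $\mue$-term combined with Poincar\'e--Friedrichs. The only cosmetic difference is that the paper expands $\|\nabla u-\bm{\zeta}\|_{\Le}^2$ and applies Young to the cross term $-2\langle\nabla u,\bm{\zeta}\rangle_{\Le}$ (choosing $\mue/(\mue+\mumi)<\varepsilon<1$), whereas you bound $\|\nabla u\|_{\Le}^2$ by $\|\nabla u-\bm{\zeta}\|_{\Le}^2$ and $\|\bm{\zeta}\|_{\Le}^2$ directly (choosing $\varepsilon>\mue/\mumi$); the two are algebraically equivalent.
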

\begin{proof}
Using Cauchy--Schwarz and triangle  inequality yields the continuity of $a(\cdot,\cdot)$
\begin{align}
	| a(\{u, \bm{\zeta}\}, \{ \delta u, \delta \bm{\zeta} \}) | & \leq
	  2 \mue \| \nabla u - \bm{\zeta} \|_{\Le}  \| \nabla \delta u - \delta \bm{\zeta} \|_{\Le} + 2 \mumi \| \bm{\zeta} \|_{\Le} \| \delta \bm{\zeta} \|_{\Le}
	  + \muma \Lc^2 \| \curl\bm{\zeta} \|_{\Le} \, \| \curl\delta \bm{\zeta} \|_{\Le} \notag \\[2ex]
	& \leq
	c_1 \bigg ( \Big ( \| \nabla u \|_{\Le} + \| \bm{\zeta} \|_{\Le} \Big ) \Big (\| \nabla \delta u \|_{\Le} + \| \delta \bm{\zeta} \|_{\Le} \Big )  
	+ \| \bm{\zeta} \|_{\Le} \| \delta \bm{\zeta} \|_{\Le} + \| \curl \bm{\zeta} \|_{\Le} \| \curl \delta \bm{\zeta} \|_{\Le} \bigg ) \notag \\[2ex]
	& \leq 3 \, c_1 \| \{u, \bm{\zeta}\} \|_{\X} \| \{\delta u,  \delta \bm{\zeta}\} \|_{\X}\, ,
	\label{eq:conti}
\end{align}
for all $\{u, \bm{\zeta}\}, \{\delta u, \delta\bm{\zeta}\}\in X$ with the constant $c_1 = \max\left\{2\mue, 2\mumi,\muma \Lc^2\right\}$.

By employing Young's\footnote{Young: $- v\, w \geq - \left ( \dfrac{\varepsilon \, v^2}{2} + \dfrac{w^2}{2 \varepsilon} \right ) \, , \quad \forall \varepsilon > 0 \, , v,w\in \mathbb{R}$} and Poincar\'{e}-Friedrich's\footnote{Poincar\'{e}-Friedrich: $\exists c_F > 0 : \quad \|v \|_{\Le} \leq c_F \| \nabla v \|_{\Le} \, , \quad \forall v \in \mathit{H}^1_0(\Omega)$} inequalities we show the bilinear form to be coercive
\begin{align}
	a(\{u, \bm{\zeta}\}, \{ u, \bm{\zeta} \})& = 2 \mue \Big( \| \nabla u \|_{\Le}^2 + \| \bm{\zeta} \|_{\Le}^2 -2 \langle \nabla u, \bm{\zeta} \rangle_{\Le} \Big ) + 2\mumi \| \bm{\zeta} \|_{\Le}^2 + \muma \, \Lc^2 \| \curl\bm{\zeta} \|_{\Le}^2 \notag \\
	& \geq 2\mue \Big (\| \nabla u \|_{\Le}^2 + \| \bm{\zeta} \|_{\Le}^2 - \varepsilon \| \nabla u \|_{\Le}^2 - \dfrac{1}{\varepsilon} \| \bm{\zeta} \|_{\Le}^2 \Big ) + 2\mumi \| \bm{\zeta} \|_{\Le}^2 +  \muma \, \Lc^2 \| \curl\bm{\zeta} \|_{\Le}^2 \notag \\
	&\geq c_3\Big (\|\nabla u\|^2_{\Le}+\|\bm{\zeta}\|^2_{\Le}+\|\curl\bm{\zeta} \|^2_{\Le}\Big)\notag\\
	& \geq \frac{c_3}{2}\min\left\{1,\frac{1}{1+c_F^2}\right\} \| \{u , \bm{\zeta} \} \|_{\X}^2 \, ,
\end{align} 
when the constant $\varepsilon$ is chosen as $1 > \varepsilon >\dfrac{\mue}{\mue + \mumi}$, which is possible for $\mue,\mumi>0$. Consequently, the coercivity constant reads
\begin{align}
	\beta = \frac{c_3}{2}\min\left\{1,\frac{1}{1+c_F^2}\right\},\quad c_3=\min \left \{2\mue (1 - \varepsilon), 2\mue\left(1 - \dfrac{1}{\varepsilon}\right) + 2 \mumi, \, \muma \Lc^2 \right \} \, .
\end{align}
This finishes the proof. 
\end{proof}

\begin{remark}\label{rem:discussion_h1_hcurl}
Note, that the proof fails when taking instead $\X=\Honez(\Omega)\times [\Honez(\Omega)]^2$ as $a(\cdot,\cdot)$ is then no longer coercive in this space because one cannot find a constant $c>0$ such that $\|\bm{\zeta}\|^2_{\Le}+\|\curl \bm{\zeta}\|^2_{\Le}\geq c\,\|\bm{\zeta}\|^2_{\Hone}$, for all $\bm{\zeta}\in[\Honez(\Omega)]^2$. As $[\Hone(\Omega)]^2$ is dense in $\Hc,\Omega)$, we might expect convergence for $\bm{\zeta}\in [\Hone(\Omega)]^2$, however, at the cost of sub-optimal convergence rates in the discretized setting. We present numerical examples, where the exact solution is in $\Hc,\Omega)$ but not in $[\Hone(\Omega)]^2$ observing only slow convergence. If the exact solution is smooth, i.e. $\bm{\zeta}$ is also in $[\Hone(\Omega)]^2$, optimal convergence is observed.
\end{remark}

An important aspect of the relaxed micromorphic continuum is its relation to the classical continuum theory (linear elasticity). This relation is governed by the material constants, where the characteristic length $\Lc$ plays a significant role. We are therefore interested in robust computations with respect to $\Lc$. 

The following result characterizes the conditions when a trivial solution with respect to $\Lc$ is expected.
\begin{theorem}
	\label{thm:omega_gradfield}
	Assume that the requirements of \cref{thm:primal_hcurl_solve} are fulfilled. Further, let $\bm{\omega} = \nabla r$ be a gradient field, then, the microdistortion $\bm{\zeta}$ is compatible, i.e. $\bm{\zeta} = \nabla \chi$ and the solution $\{u,\bm{\zeta}\}\in X$ is independent of the parameter $\Lc$.
\end{theorem}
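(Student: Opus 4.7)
The plan is to build an $\Lc$-independent candidate solution via the ansatz $\bm{\zeta} = \nabla \chi$ with $\chi \in \Honez(\Omega)$, and then appeal to the uniqueness assertion of \cref{thm:primal_hcurl_solve} to identify this candidate with the true solution.

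Under this ansatz one has $\curl \bm{\zeta} = \curl \nabla \chi = 0$, so the entire $\muma\Lc^2$-term in the bilinear form vanishes identically. Restricting the test space to pairs $\{\delta u, \nabla \delta \chi\}$ with $\delta u, \delta \chi \in \Honez(\Omega)$ and using $\bm{\omega} = \nabla r$, \cref{eq:weak} reduces to: find $\{u^*, \chi^*\} \in [\Honez(\Omega)]^2$ such that
\begin{align*}
\int_\Omega 2\mue \langle \nabla(u^* - \chi^*),\, \nabla(\delta u - \delta \chi)\rangle + 2\mumi \langle \nabla \chi^*,\, \nabla \delta \chi\rangle \, \dd X = \int_\Omega f\,\delta u + \langle \nabla r,\, \nabla \delta \chi\rangle \, \dd X
\end{align*}
for all $\delta u, \delta \chi \in \Honez(\Omega)$. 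This reduced bilinear form is continuous, and by the same Young--Poincar\'e manipulation as in the proof of \cref{thm:primal_hcurl_solve}, now applied on $[\Honez(\Omega)]^2$ with the positive contribution from the $2\mumi$-term taking the place of the vanished $\curl$-term, it is also coercive. Lax--Milgram then delivers a unique pair $\{u^*, \chi^*\}$, and this pair is manifestly independent of $\Lc$.

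The main obstacle is the remaining step: showing that the lifted pair $\{u^*, \nabla \chi^*\} \in X$ solves the full weak problem \cref{eq:weak} tested against every $\{\delta u, \delta \bm{\zeta}\} \in X$, not merely those whose second component is a gradient. To handle this I would Helmholtz-decompose an arbitrary test field as $\delta \bm{\zeta} = \nabla \delta \phi + \Dc(\delta \psi)$, with $\delta \phi \in \Honez(\Omega)$ and $\delta \psi$ satisfying the natural Neumann condition $\partial_\nu \delta \psi = 0$ that ensures $\Dc(\delta \psi) \in \Hcz,\Omega)$. The gradient piece is already covered by the reduced problem. For the $\Dc(\delta \psi)$ piece I would apply the integration-by-parts identity relating $\int \langle \bm{v},\, \Dc(\delta \psi)\rangle \, \dd X$ to $\int (\curl \bm{v})\, \delta \psi \, \dd X$, exploiting $\curl \nabla \chi^* = 0$, $\curl \nabla r = 0$, and the vanishing tangential traces of $\nabla u^*$ and $\nabla \chi^*$ on $\partial \Omega$, so that both sides of \cref{eq:weak} collapse to the same expression. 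Once the lift is shown to solve \cref{eq:weak}, the uniqueness in \cref{thm:primal_hcurl_solve} forces the actual solution $\{u, \bm{\zeta}\}$ to coincide with $\{u^*, \nabla \chi^*\}$; hence $\bm{\zeta} = \nabla \chi^*$ is compatible and, since $\{u^*, \chi^*\}$ is $\Lc$-independent, so is $\{u, \bm{\zeta}\}$.
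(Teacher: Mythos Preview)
Your approach shares the paper's core idea---make the ansatz $\bm{\zeta}=\nabla\chi$, build an $\Lc$-independent candidate, and invoke uniqueness---but the paper bypasses your coupled-problem/Helmholtz machinery by a simpler observation. Once $\curl\bm{\zeta}=0$, the weak form tested with $\delta u=0$ and \emph{arbitrary} $\delta\bm{\zeta}\in\Hcz,\Omega)$ contains no differential operator acting on $\bm{\zeta}$; it is the pointwise identity $-2\mue(\nabla u-\nabla\chi)+2\mumi\nabla\chi=\nabla r$, which gives directly $\nabla\chi=\tfrac{1}{2(\mue+\mumi)}(\nabla r+2\mue\nabla u)$. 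Substituting this into the $\delta\bm{\zeta}=0$ equation yields a single Poisson problem for $u$. Because the algebraic relation holds pointwise, the candidate automatically satisfies the full weak form for every test pair, so no separate Helmholtz verification is needed. Your route through a coupled bilinear form on $[\Honez(\Omega)]^2$ followed by a Helmholtz decomposition of the test field is heavier than necessary, and one step deserves care: after integrating $\int_\Omega\langle\nabla r,\Dc(\delta\psi)\rangle\,\dd X$ by parts you are left with the boundary term $-\oint_{\partial\Omega}\delta\psi\,\partial_\tau r\,\dd s$, which does not vanish just because $\delta\psi$ satisfies a Neumann condition; the claim that ``both sides collapse to the same expression'' therefore needs more than what you wrote.
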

\begin{proof}
	We make the ansatz $\bm{\zeta}=\nabla\chi$, $\chi\in\Hone(\Omega)$ and insert it in \cref{eq:weak} choosing $\delta u=0$
	\begin{align*}
	\int_{\Omega}2\mue\langle \nabla\chi-\nabla u, \, \delta\bm{\zeta}\rangle + 2\mumi\langle\nabla\chi, \, \delta\bm{\zeta}\rangle\,\dd X  = \int_{\Omega}\langle\nabla r, \delta\bm{\zeta}\rangle\,\dd X\quad \text{ for all }\delta\bm{\zeta}.
	\end{align*}
	We can express
	\begin{align}
	\nabla\chi =\dfrac{1}{2(\mue+\mumi)}\left(\nabla r+2\mue\nabla u\right)\label{eq:zeta_gradfield}
	\end{align}
	and inserting into \cref{eq:weak} choosing $\delta \bm{\zeta}=0$ gives the following Laplace problem for $u$
	\begin{align*}
	\int_{\Omega}\dfrac{2\mue \, \mumi}{\mue+\mumi}\langle\nabla u,\nabla\delta u\rangle\,\dd X = \int_{\Omega}f\delta u+\dfrac{\mue}{\mue+\mumi}\langle\nabla r,\nabla \delta u\rangle\,\dd X \quad \text{ for all } \delta u \, ,
	\end{align*}
	which is uniquely solvable. Since by Lax--Milgram the solution is unique, $\bm{\zeta} = \nabla \chi$ and the resulting $u$ are the only possible solutions. According to \cref{eq:zeta_gradfield} the solution of \cref{eq:weak} is given independently of $\Lc$.
\end{proof}

Considering the limit case  $\Lc = 0$, the continuity of the bilinear form $a(\cdot, \cdot)$ follows automatically from \cref{eq:conti}. However, for coercivity to hold, the space for $\bm{\zeta}$ must be changed to $[\Le(\Omega)]^2$, i.e., the regularity of $\bm{\zeta}$ is lost.

\begin{theorem}
	\label{thm:zeta}
	If $\mue,\mumi>0$ and $\Lc=0$ \cref{eq:weak} has a unique solution $\{u,\bm{\zeta}\} \in \Honez(\Omega)\times[\Le(\Omega)]^2$. Further, if the right-hand side $\bm{\omega} = \nabla r$ is a gradient field with $r\in\Hone(\Omega)$, the microdistortion $\bm{\zeta}$ results in a gradient field $\bm{\zeta} = \nabla \chi$ with $\chi\in\Hone(\Omega)$. Especially, there holds the regularity result $\bm{\zeta}\in\Hc,\Omega)$.
\end{theorem}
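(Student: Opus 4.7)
The plan is to mirror the structure of the proof of \cref{thm:primal_hcurl_solve} with the hybrid space $X_0=\Honez(\Omega)\times [\Le(\Omega)]^2$ equipped with norm $\|\{u,\bm{\zeta}\}\|_{X_0}=\|u\|_{\Hone}+\|\bm{\zeta}\|_{\Le}$, and then to exploit the loss of the curl term to obtain extra pointwise structure on $\bm{\zeta}$.

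For the first assertion I would invoke Lax--Milgram on $X_0$. Continuity is immediate from Cauchy--Schwarz, since the $\muma\Lc^2$-term is absent and the remaining two contributions are controlled by $\|u\|_{\Hone}$ and $\|\bm{\zeta}\|_{\Le}$ only. For coercivity I would repeat verbatim the Young--Poincar\'e computation used for \cref{thm:primal_hcurl_solve}, observing that the bound
\begin{align*}
a(\{u,\bm{\zeta}\},\{u,\bm{\zeta}\}) \geq 2\mue(1-\varepsilon)\|\nabla u\|_{\Le}^2 + \bigl(2\mue(1-\tfrac{1}{\varepsilon}) + 2\mumi\bigr)\|\bm{\zeta}\|_{\Le}^2
\end{align*}
with the admissible choice $1 > \varepsilon > \mue/(\mue+\mumi)$ never invokes $\|\curl\bm{\zeta}\|_{\Le}$; Poincar\'e--Friedrich on $u\in\Honez(\Omega)$ then upgrades $\|\nabla u\|_{\Le}$ to the full $\Hone$-norm, giving coercivity in the $X_0$-norm. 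The right-hand side $\{f,\bm{\omega}\}\mapsto\int_\Omega f\,\delta u+\langle\bm{\omega},\delta\bm{\zeta}\rangle\,\dd X$ is plainly continuous on $X_0$.

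For the second assertion the key observation is that, because $\Lc=0$, the Euler--Lagrange equation obtained from choosing $\delta u=0$ contains no derivatives of $\bm{\zeta}$ or $\delta\bm{\zeta}$, so it becomes a pointwise algebraic identity in $[\Le(\Omega)]^2$. Testing against arbitrary $\delta\bm{\zeta}\in[\Le(\Omega)]^2$ and inserting $\bm{\omega}=\nabla r$ gives
\begin{align*}
2\mue(\bm{\zeta}-\nabla u) + 2\mumi\bm{\zeta} = \nabla r \qquad \text{a.e.\ in }\Omega,
\end{align*}
which I would solve algebraically for $\bm{\zeta}$:
\begin{align*}
\bm{\zeta} = \frac{1}{2(\mue+\mumi)}\bigl(\nabla r + 2\mue\nabla u\bigr) = \nabla\chi, \qquad \chi\coloneqq\frac{r+2\mue u}{2(\mue+\mumi)} \in \Hone(\Omega).
\end{align*}
Since $u\in\Honez(\Omega)$ and $r\in\Hone(\Omega)$, $\chi$ indeed lies in $\Hone(\Omega)$. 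For the regularity claim I would simply note that $\curl\nabla\chi=\chi_{,21}-\chi_{,12}=0$ in the distributional sense, so $\curl\bm{\zeta}=0\in\Le(\Omega)$ and hence $\bm{\zeta}\in\Hc,\Omega)$.

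The only mildly subtle point is the coercivity of $a(\cdot,\cdot)$ on $X_0$; once one recognizes that the $2\mumi\|\bm{\zeta}\|_{\Le}^2$ mass term is precisely what replaces the missing curl control on $\bm{\zeta}$, everything else reduces to bookkeeping. The gradient-field and regularity statements are then essentially free consequences of the absence of the $\curl$-penalty, which turns the microdistortion equation into a pointwise relation.
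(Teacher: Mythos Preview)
Your proposal is correct and follows essentially the same approach as the paper: Lax--Milgram on $\Honez(\Omega)\times[\Le(\Omega)]^2$ via the Young/Poincar\'e coercivity argument of \cref{thm:primal_hcurl_solve}, followed by the algebraic identification $\bm{\zeta}=\tfrac{1}{2(\mue+\mumi)}(\nabla r+2\mue\nabla u)=\nabla\chi$ exactly as in \cref{thm:omega_gradfield}. The one minor presentational difference is that you derive the pointwise relation directly by testing against arbitrary $\delta\bm{\zeta}\in[\Le(\Omega)]^2$, whereas the paper refers back to the ansatz-then-uniqueness argument of \cref{thm:omega_gradfield}; your route is arguably cleaner here precisely because the enlarged test space $[\Le(\Omega)]^2$ makes the weak $\delta\bm{\zeta}$-equation genuinely equivalent to a pointwise identity.
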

\begin{proof}
	The proof of existence and uniqueness follows exactly the same lines as the proof of \cref{thm:primal_hcurl_solve}. If $\bm{\omega} = \nabla r$ we can conclude as in the proof of \cref{thm:omega_gradfield} that $\bm{\zeta}$ is a gradient field.
\end{proof}
\begin{remark}
	Using \cref{thm:omega_gradfield} and assuming $\bm{\omega}=0$, we can reformulate \cref{eq:strong2} to retrieve $\bm{\zeta}$ from the known field $u$ 
	\begin{align}
		\bm{\zeta} = \nabla \chi = \dfrac{\mue}{\mumi + \mue} \nabla u \, .
	\end{align}
	Furthermore, we can condensate \cref{eq:strong1} into the Poisson equation 
	\begin{gather}
	-\di \left (\dfrac{2 \mue \, \mumi}{\mue + \mumi} \nabla u \right ) = \underbrace{\left (\dfrac{ -2 \mue \, \mumi}{\mue + \mumi} \right )}_{= -2 \,\muma} \Delta u = - 2 \, \muma \, \Delta u = f \, , \label{eq:lap}
	\end{gather}
	where the homogenization of the material constants follows as in \cite{Neff2019}.
	We notice, that \cref{thm:omega_gradfield} and \cref{thm:zeta} imply the field $u$ is always independent of the microdistortion $\bm{\zeta}$ in this setting. \AS{In the condensed state, the relation of the model with antiplane shear for membranes is apparent.}
	\label{rem:membrane}
\end{remark}

\begin{remark}
We note that the previous result does not hold in the full three-dimensional relaxed micromorphic continuum, i.e. the absence of external moments does not automatically imply $\bm{P} = \nabla \bm{\chi}$ for $\bm{\chi} \in [\Hone(\Omega)]^3$.
\end{remark}

Having considered the limit of the characteristic length $\Lc \to 0$, we reformulate \cref{eq:weak} as an equivalent mixed formulation in order to examine its limit for $\Lc \to \infty$. We start by introducing the new variable 
\begin{align}
	m = \muma\,\Lc^2 \curl \bm{\zeta} \,\in \Lez(\Omega) ,\label{eq:def_p}
\end{align}
and constructing a new bilinear form by multiplying it with a test function
\begin{align}
	\int_{\Omega} \langle \curl \bm{\zeta} , \delta m \rangle - \dfrac{1}{\muma\,\Lc^2}\langle m , \delta m \rangle  \,\dd X = 0 \,  \quad \text{ for all } \delta m \in \Lez(\Omega) \, .
\end{align} 
The restriction to $m \in \Lez(\Omega)$ follows from the Stoke's theorem
	\begin{align}
	\int_{\Omega} \curl \bm{\zeta} \, \dd X = \oint_{\partial \Omega} \langle \bm{\zeta} ,\, \bm{\tau} \rangle \, \dd s = 0 \,  \quad \text{ for all } \bm{\zeta} \in \Hcz,\Omega) \, .
	\label{eq:stokes}
	\end{align}
We introduce the (bi-)linear forms
\begin{subequations}
\begin{align}
	a(\{u,\bm{\zeta}\},\{\delta u,\delta \bm{\zeta}\}) &= \int_{\Omega} 2\mue \langle (\nabla u - \bm{\zeta}) , (\nabla \delta u -\delta \bm{\zeta}) \rangle + 2\mumi \,  \langle \bm{\zeta} ,  \delta \bm{\zeta} \rangle \,\dd X \, , \\
	b(\{u,\bm{\zeta}\}, \delta m) &= \int_{\Omega} \langle \curl\bm{\zeta} , \delta m \rangle\, \dd X \, , \\
	c(m,\delta m) &= \int_{\Omega} \langle m , \delta m \rangle\, \dd X \, , \\
	d(\{\delta u, \delta \bm{\zeta}\}) &=  \int_{\Omega}   \langle \delta u , \, f \rangle + \langle \delta \bm{\zeta} , \, \bm{\omega} \rangle \, \dd X \, ,
\end{align}
\end{subequations}
and the resulting mixed formulation reads: find $(\{u,\bm{\zeta}\}, m)\in X\times \Lez(\Omega)$ such that
\begin{subequations}
\label{eq:mixed_problem}
\begin{align}
	 a(\{u,\bm{\zeta}\},\{\delta u,\delta \bm{\zeta}\}) +  b(\{\delta u, \delta \bm{\zeta}\},  m) &= d(\{\delta u, \delta \bm{\zeta}\}) \,  
	&&\text{ for all } \{\delta u,\delta \bm{\zeta}\} \in \X \, ,
	\label{eq:mixed_problem_a} \\
	 b(\{ u, \bm{\zeta}\}, \delta m)- \dfrac{1}{\muma\,\Lc^2}c(m,\delta m)  &= 0 \,  &&\text{ for all } \delta m \in \Lez(\Omega) \, , \label{eq:mixed_problem_b} 
\end{align}
\end{subequations}
where the Lagrange multiplier $m$ has the physical meaning of a moment stress tensor.

The limit case $\lim \Lc \to \infty$ of \cref{eq:mixed_problem} is well-defined, resulting in the problem: Find $(\{u_{\infty},\bm{\zeta}_{\infty}\},m_{\infty})\in X\times \Lez(\Omega)$ such that
\begin{subequations}
	\label{eq:mixed_problem_limit}
	\begin{align}
	a(\{u_\infty,\bm{\zeta}_\infty\},\{\delta u,\delta \bm{\zeta}\}) +  b(\{\delta u, \delta \bm{\zeta}\},  m_\infty) &= d(\{\delta u, \delta \bm{\zeta}\}) \,  
	&&\text{ for all } \{\delta u,\delta \bm{\zeta}\} \in \X \, ,\label{eq:mixed_problem_limit_a}
	\\
	b(\{ u_\infty, \bm{\zeta}_\infty\}, \delta m)  &= 0 \,  &&\text{ for all } \delta m \in \Lez(\Omega) \, . \label{eq:mixed_problem_limit_b}
	\end{align}
\end{subequations}
Consequently, at the limit $\lim \Lc \to \infty$ we have $\curl \bm{\zeta} = 0$.

We now show existence and uniqueness of both mixed problems and that in the limit case $\Lc\to\infty$ the solution of \cref{eq:mixed_problem} converges to the solution of \cref{eq:mixed_problem_limit} with quadratic convergence rate in $\Lc$.
\begin{theorem}
	For $\mue, \mumi , \muma, \Lc > 0$ \cref{eq:mixed_problem} has a unique solution $(\{u,\bm{\zeta}\},m)\in X\times \Lez(\Omega)$ satisfying for $(\muma\Lc^2)^{-1}\leq 1$ the stability estimate
	\begin{align}
	\|\{ u,\bm{\zeta} \}\|_{\X}+\|m\|_{\Le} \leq c_1 \Big(\|f\|_{\Le}+\|\bm{\omega}\|_{\Le}\Big) \, ,\label{eq:mixed_method_stab_est}
	\end{align}
	where $c_1$ is independent of $\Lc$. Further, let $(\{u_{\infty},\bm{\zeta}_{\infty}\},m_{\infty})\in X\times \Lez(\Omega)$ be the unique solution of \cref{eq:mixed_problem_limit}. Then, we have the estimate
	\begin{align}
	\| \{u_\infty - u,\bm{\zeta}_\infty\ - \bm{\zeta} \} \|_{\X} + \| m_\infty - m \|_{\Le} \leq \dfrac{c_2}{\Lc^2} \Big(\|f\|_{\Le}+\|\bm{\omega}\|_{\Le}\Big) \, ,\label{eq:mixed_method_conv_Lc}
	\end{align}
	where $c_2$ does not depend on $\Lc$.
\end{theorem}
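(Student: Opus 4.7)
The plan is to cast \cref{eq:mixed_problem} as a penalised saddle-point problem and to apply the classical Brezzi theorem together with its Braess extension covering a positive semi-definite perturbation $c(\cdot,\cdot)$; the convergence rate \cref{eq:mixed_method_conv_Lc} then follows from a standard error-equation argument. The key bookkeeping point is to check that the continuity and stability constants are independent of $\Lc$ once the Curl penalty has been isolated in $c$.

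First I would verify continuity of $a$, $b$, $c$ by Cauchy--Schwarz; the bound on $a$ carries only $\mue$ and $\mumi$, while $c$ has continuity constant $1$. Next, I would identify the kernel of $b$: since $\curl\bm{\zeta}\in\Lez(\Omega)$ whenever $\bm{\zeta}\in\Hcz,\Omega)$ by \cref{eq:stokes}, the orthogonality $b(\{u,\bm{\zeta}\},\delta m)=0$ for all $\delta m\in\Lez$ forces $\curl\bm{\zeta}=0$ (a function in $\Lez$ that is orthogonal to $\Lez$ must vanish). On this kernel $\|\bm{\zeta}\|_{\Hc)}=\|\bm{\zeta}\|_{\Le}$, so the Young and Poincar\'e--Friedrich argument of \cref{thm:primal_hcurl_solve} yields coercivity of $a$ with an $\Lc$-independent constant depending only on $\mue$, $\mumi$ and $c_F$.

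The third and most delicate ingredient is the inf-sup condition for $b$: given $q\in\Lez(\Omega)$ I need $\bm{\zeta}\in\Hcz,\Omega)$ with $\curl\bm{\zeta}=q$ and $\|\bm{\zeta}\|_{\Hc)}\lesssim\|q\|_{\Le}$. I would solve the Neumann Poisson problem $\Delta\psi=q$ on $\Hone(\Omega)/\mathbb{R}$ (well-posed because $\int_{\Omega}q=0$), set the rotated gradient $\bm{\zeta}^{\ast}=(-\psi_{,2},\psi_{,1})$ so that $\curl\bm{\zeta}^{\ast}=\Delta\psi=q$, and then correct the tangential trace by subtracting $\nabla\varphi$, where $\varphi$ satisfies the tangential ODE $\partial_{\bm{\tau}}\varphi=\bm{\zeta}^{\ast}\cdot\bm{\tau}$ along each component of $\partial\Omega$; its compatibility condition $\oint\bm{\zeta}^{\ast}\cdot\bm{\tau}\,\dd s=\int_{\Omega}q=0$ is precisely the zero-mean property of $q$. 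Testing $b$ against $\{0,\bm{\zeta}\}$ then produces the required inf-sup inequality with an $\Lc$-independent constant.

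With these three ingredients in hand the Brezzi--Braess theorem for penalised saddle-point problems delivers unique solvability of both \cref{eq:mixed_problem} and the limit problem \cref{eq:mixed_problem_limit}, together with the stability estimate \cref{eq:mixed_method_stab_est} uniformly in $\Lc$ under the mild assumption $(\muma\Lc^{2})^{-1}\leq 1$. For \cref{eq:mixed_method_conv_Lc} I would subtract the two systems; setting $E=\{u-u_\infty,\bm{\zeta}-\bm{\zeta}_\infty\}$ and $e=m-m_\infty$, the error pair solves the unperturbed saddle-point problem
\begin{align*}
a(E,\delta U)+b(\delta U,e)&=0\,, &&\text{for all }\delta U\in\X\,,\\
b(E,\delta m)&=\frac{1}{\muma\Lc^{2}}\,(m,\delta m)_{\Le}\,, &&\text{for all }\delta m\in\Lez(\Omega)\,,
\end{align*}
with right-hand side of size $(\muma\Lc^{2})^{-1}\|m\|_{\Le}$. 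Applying the $\Lc$-independent Brezzi stability of the limit problem to $(E,e)$ yields $\|E\|_{\X}+\|e\|_{\Le}\leq c\,\Lc^{-2}\|m\|_{\Le}$, and substituting the already-established bound $\|m\|_{\Le}\leq c_{1}(\|f\|_{\Le}+\|\bm{\omega}\|_{\Le})$ produces \cref{eq:mixed_method_conv_Lc}. The only genuinely non-routine step I expect is the inf-sup construction in paragraph three; the remainder is careful tracking of $\Lc$-dependence through the penalised Brezzi machinery.
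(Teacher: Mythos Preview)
Your proposal is correct and follows essentially the same route as the paper: the extended Brezzi theorem for the penalised saddle-point problem, kernel coercivity of $a$ inherited from \cref{thm:primal_hcurl_solve} on $\{\curl\bm{\zeta}=0\}$, an inf-sup bound for $b$, and continuous dependence on the penalty parameter for \cref{eq:mixed_method_conv_Lc}. The paper merely cites \cite{LS15} (the rotated 2D Stokes inf-sup) and \cite[Cor.~4.15]{Bra2013} where you spell out the Neumann-potential construction and the explicit error equation; one minor remark is that with homogeneous Neumann data for $\psi$ the tangential trace $\langle\bm{\zeta}^{\ast},\bm{\tau}\rangle=\partial_{\bm{\nu}}\psi$ already vanishes, so your $\nabla\varphi$ correction is superfluous.
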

\begin{proof}
Existence and uniqueness follows from the extended Brezzi theorem \cite[Thm. 4.11]{Bra2013}. The continuity of $a(\cdot,\cdot)$, $b(\cdot,\cdot)$, $c(\cdot,\cdot)$ and non-negativity of $a(\cdot,\cdot)$ and $c(\cdot,\cdot)$ are obvious. Therefore, we have to prove that $a(\cdot,\cdot)$ is coercive on the kernel of $b(\cdot,\cdot)$
\begin{align}
	\ker (b) = \Big \{ \{u, \bm{\zeta} \} \in \X \, | \, b(\{u, \bm{\zeta} \}, \delta m) = 0 \,\text{ for all } \delta m \in \Lez(\Omega) \Big \} = \Big \{ \{u, \bm{\zeta} \} \in \X \, | \, \curl \bm{\zeta} = 0 \Big \} \, .
\end{align}
However, we already know from \cref{thm:primal_hcurl_solve} that $a(\{u,\bm{\zeta}\},\{\delta u,\delta\bm{\zeta}\}) + \int_{\Omega}\langle\curl\bm{\zeta},\curl\delta\bm{\zeta}\rangle\,\dd X$ is coercive. This leaves us with the Ladyzhenskaya--Babu{\v{s}}ka--Brezzi (LBB) condition to be satisfied
\begin{align}
	\exists \, \beta_2 > 0 : \quad \sup_{\{u, \bm{\zeta}\} \in \X} \dfrac{b(\{u, \bm{\zeta}\},m)}{ \| \{u, \bm{\zeta}\} \|_{\X}} \geq \beta_2 \, \|m\|_{\Le} \,  \quad \text{ for all } m \in \Lez(\Omega) \, .
\end{align}
We choose $u=0$ and $\bm{\zeta}$ such that $\curl \bm{\zeta}=m$ with $\|\bm{\zeta}\|_{\Le}\leq c\|m\|_{\Le}$ leading to
\begin{align}
\dfrac{b(\{u, \bm{\zeta}\},m)}{ \| \{u, \bm{\zeta}\} \|_{\X}} &= \dfrac{\int_\Omega \langle m , \, \curl\bm{\zeta} \rangle \,\dd X}{ \| \bm{\zeta}\|_{\Le}+\| \curl\bm{\zeta}\|_{\Le}}\geq c \, \dfrac{\|m\|^2_{\Le}}{\|m\|_{\Le}} = c\,\|m\|_{\Le} \, ,
\end{align}
where the construction of $\bm{\zeta}$ is according to \cite{LS15}\footnote{The construction is derived directly from the 2D Stokes LBB condition with $H(\di)$-conforming elements and applies here since the $\curl$ operator is a rotated divergence operator in two dimensions.}. 
Thus, there exists a unique solution independent of $\Lc$ satisfying the stability estimate \cref{eq:mixed_method_stab_est}.\\

With the (classical) Brezzi-Theorem also the existence and uniqueness of \cref{eq:mixed_problem_limit} follows immediately and estimate \cref{eq:mixed_method_conv_Lc} due to the continuous dependence of the solution with respect to the parameter $\Lc$, \cite[Cor. 4.15]{Bra2013}.
\end{proof}

\begin{remark}
As mentioned in \cite{LS15} the space for $m$ must be chosen as $\Lez(\Omega)$, where its mean is zero, if Dirichlet data are prescribed on the whole boundary $\Gamma_D^{\zeta}=\partial \Omega$. This follows from \cref{eq:stokes} 
\begin{align}
\int_\Omega m \,\dd X =\muma\Lc^2\int_\Omega \curl \bm{\zeta}\,\dd X = \muma\Lc^2\int_{\partial \Omega} \langle \bm{\zeta} , \, \bm{\tau} \rangle \,\dd s = 0 \,  \qquad \text{ for all } \bm{\zeta}\in \Hcz,\Omega).
\end{align}
If also Neumann data is prescribed for $\bm{\zeta}$, the appropriate function space for $m$ is $\Le(\Omega)$.
\end{remark}
\begin{remark}
	In the full micromorphic continuum, where the gradient takes the place of the curl of the microdistortion
	\begin{align}
	\underbrace{\int_{\Omega} 2\mue \langle (\nabla u - \bm{\zeta}) , \, (\nabla \delta u -\delta \bm{\zeta}) \rangle + 2\mumi \,  \langle \bm{\zeta} , \, \delta \bm{\zeta} \rangle + \muma \, \Lc ^2 \, \langle \nabla\bm{\zeta} , \, \nabla\delta \bm{\zeta} \rangle \, \dd X}_{\displaystyle =a_\mathrm{grad}(\{u,\bm{\zeta}\},\{\delta u,\delta\bm{\zeta}\})} = 
	\int_{\Omega}   \langle \delta u , \, f \rangle + \langle \delta \bm{\zeta} , \, \bm{\omega} \rangle \, \dd X \, ,
	\label[Problem]{eq:weak_full}
	\end{align}
	existence and uniqueness follow similarly with the space $\mathit{X} = \Hone(\Omega) \times [\Hone(\Omega)]^2$. However, the limit case $\Lc \to \infty$ yields $\nabla \bm{\zeta} = 0$ and consequently $\bm{\zeta} = const.$, for which non-trivial boundary conditions cannot be considered, compare also \cref{subsec:num_ex_cons} for a numerical example.
	\label{rem:full}
\end{remark}

To conclude this section we investigate the necessary and sufficient conditions such that in the limit $\Lc\to\infty$ the solution satisfies $\nabla u = \bm{\zeta}$. This state represents a zoom into the microstructure in the three-dimensional theory with microscopic stiffness given by $\mumi$ \cite{Neff2019}.
In \cref{thm:omega_gradfield} we found sufficient conditions to obtain a gradient field for the microdistortion, which, however, does not have to be $\nabla u$. The following theorem states that only for a zero right-hand side $f$, but arbitrary $\vb{\omega}$, the desired behaviour is achieved.
\begin{theorem}
	\label{thm:limit_gradu_zeta}
	Let $\Omega$ be simply connected and $\Gamma^u_D=\Gamma_D^{\zeta}=\partial\Omega$. Then there holds for the solution $\{u,\bm{\zeta}\}\in X$ of \cref{eq:weak}
	\begin{align}
	\|\bm{\zeta}-\nabla u\|_{\Hc)}\leq \frac{c}{\Lc^2}\,,
	\end{align}
	if and only if $f=0$, where $c$ does not depend on $\Lc$.
\end{theorem}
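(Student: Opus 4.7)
The plan is to study the compatibility residual $\bm{e}_\Lc:=\bm{\zeta}-\nabla u$; by the homogeneous Dirichlet conditions on $u$ (forcing $\langle\nabla u,\bm{\tau}\rangle=0$ on $\partial\Omega$) and on $\bm{\zeta}$, one has $\bm{e}_\Lc\in\Hcz,\Omega)$, and since $\curl\nabla u=0$ also $\curl\bm{e}_\Lc=\curl\bm{\zeta}$. Proving the quadratic decay of $\|\bm{e}_\Lc\|_{\Hc)}$ is therefore equivalent to controlling $\|\bm{e}_\Lc\|_{\Le}$ and $\|\curl\bm{\zeta}\|_{\Le}$ each by $c/\Lc^{2}$.

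For the direction $f=0\Rightarrow$ estimate, I would first test \cref{eq:a} with $\delta\bm{\zeta}=0$ and arbitrary $\delta u\in\Honez(\Omega)$; when $f=0$ this rewrites as $\int_\Omega\langle\bm{e}_\Lc,\nabla\delta u\rangle\,\dd X = 0$, i.e.\ $\di\bm{e}_\Lc=0$ in the distributional sense. Next, reading off from the mixed formulation \cref{eq:mixed_problem_b} that $\curl\bm{\zeta}=m/(\muma\Lc^{2})$, the $\Lc$-independent stability bound \cref{eq:mixed_method_stab_est} on $\|m\|_{\Le}$ immediately gives $\|\curl\bm{\zeta}\|_{\Le}\leq c/\Lc^{2}$. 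To promote this to an $\Le$-bound on $\bm{e}_\Lc$ itself I would invoke the topology of $\Omega$: since $\Omega$ is simply connected and $\bm{e}_\Lc\in\Hcz,\Omega)$ is divergence-free, a 2D stream-function representation yields $\bm{e}_\Lc=\Dc\phi$ for some $\phi\in\Hone(\Omega)$, the tangential-trace condition $\langle\bm{e}_\Lc,\bm{\tau}\rangle=0$ translates into the homogeneous Neumann condition $\partial_{\bm{\nu}}\phi=0$ on $\partial\Omega$, and $\curl\bm{e}_\Lc=-\Delta\phi$. Normalizing $\int_\Omega\phi\,\dd X=0$ and combining $\|\nabla\phi\|_{\Le}^{2}=-\int_\Omega\phi\,\Delta\phi\,\dd X$ with Poincar\'{e}--Wirtinger then yields $\|\bm{e}_\Lc\|_{\Le}=\|\nabla\phi\|_{\Le}\leq C\|\curl\bm{\zeta}\|_{\Le}\leq c/\Lc^{2}$, completing this half.

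For the converse, I would return to \cref{eq:a} with $\delta\bm{\zeta}=0$ and rewrite it as $\int_\Omega f\,\delta u\,\dd X = -2\mue\int_\Omega\langle\bm{e}_\Lc,\nabla\delta u\rangle\,\dd X$, valid for every $\delta u\in\Honez(\Omega)$ and every admissible $\Lc$. If the announced estimate holds, the right-hand side is bounded by $(2\mue c/\Lc^{2})\|\delta u\|_{\Hone}$, while the left-hand side is independent of $\Lc$; sending $\Lc\to\infty$ forces $\int_\Omega f\,\delta u\,\dd X=0$ for all $\delta u\in\Honez(\Omega)$, hence $f=0$.

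The main obstacle I anticipate is the Friedrichs-type inequality $\|\bm{v}\|_{\Le}\leq C\|\curl\bm{v}\|_{\Le}$ for divergence-free $\bm{v}\in\Hcz,\Omega)$ on a simply connected planar domain: it is precisely at this step that the topological hypothesis on $\Omega$ and the homogeneous Dirichlet data on both unknowns are essential, and it is what converts the immediate rate on $\|\curl\bm{\zeta}\|_{\Le}$ supplied by the mixed-stability bound into the full $\Hc)$-rate on $\bm{\zeta}-\nabla u$. Everything else is algebraic manipulation of the weak and mixed forms already established.
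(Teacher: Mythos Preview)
Your argument is correct and takes a genuinely different route from the paper's. The paper proceeds via the \emph{limit problem} \cref{eq:mixed_problem_limit}: from $\curl\bm{\zeta}_\infty=0$ and $\bm{\zeta}_\infty\in\Hcz,\Omega)$ on a simply connected domain it extracts a potential $\Psi\in\Honez(\Omega)$ with $\bm{\zeta}_\infty=\nabla\Psi$, observes that the first limit equation then reads $\int_\Omega 2\mue\langle\nabla u_\infty-\nabla\Psi,\nabla\delta u\rangle\,\dd X=\int_\Omega f\,\delta u\,\dd X$, so that $u_\infty=\Psi$ (hence $\bm{\zeta}_\infty=\nabla u_\infty$) precisely when $f=0$, and finishes with the triangle inequality together with the already-established quadratic rate \cref{eq:mixed_method_conv_Lc}. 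Both directions of the equivalence are read off from this characterization of the limit.

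Your approach bypasses the limit problem entirely. You work at finite $\Lc$ with $\bm{e}_\Lc=\bm{\zeta}-\nabla u$, use $f=0$ to obtain $\di\bm{e}_\Lc=0$, pull the bound $\|\curl\bm{\zeta}\|_{\Le}\le c/\Lc^{2}$ directly from the definition of $m$ and the $\Lc$-independent stability \cref{eq:mixed_method_stab_est}, and then upgrade it to an $\Le$-bound on $\bm{e}_\Lc$ via a Friedrichs-type inequality for divergence-free fields in $\Hcz,\Omega)$ (your stream-function argument). The converse you give is arguably cleaner than the paper's implicit one: the identity $\int_\Omega f\,\delta u\,\dd X=-2\mue\int_\Omega\langle\bm{e}_\Lc,\nabla\delta u\rangle\,\dd X$ holds for every $\Lc$, so the assumed decay forces $f=0$ immediately. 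What the paper's route buys is economy---it reuses the machinery of \cref{eq:mixed_method_conv_Lc} already in place---whereas your route makes explicit where simple connectedness enters (the Friedrichs inequality) and avoids any reference to the limit solution; both are valid.
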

\begin{proof}
	From the limit solution $\{u_{\infty},\bm{\zeta}_{\infty}\}\in X$ of \cref{eq:mixed_problem_limit} we have that $\bm{\zeta}_{\infty}\in\Hcz,\Omega)$ and $\curl \bm{\zeta}_{\infty}=0$. This implies the existence of $\Psi\in \Honez(\Omega)$ such that $\bm{\zeta}=\nabla \Psi \in \ker(\curl)$. Inserting this into \cref{eq:mixed_problem_limit_a}, where $\delta\bm{\zeta}=0$ is chosen, yields
	\begin{align*}
	\int_{\Omega} 2\mue \langle \nabla u - \nabla \Psi ,\nabla \delta u  \rangle  \, \dd X = \int_{\Omega} \langle \delta u , \, f \rangle\ \, \dd X \, \quad \text{ for all }\delta u\in \Honez(\Omega). 
	\end{align*}
	Thus, $u=\Psi\in\Honez(\Omega)$ is the unique solution if and only if $f=0$ and correspondingly $\{u_{\infty},\bm{\zeta}_{\infty}\}=\{\Psi,\nabla\Psi\}$. The claim follows with the triangle inequality, \cref{eq:mixed_method_conv_Lc} and the equivalence of the mixed and primal problem
	
	\vspace{4mm}
	\hspace{12mm} $\|\bm{\zeta}-\nabla u\|_{\Hc)}\leq \|\bm{\zeta}-\bm{\zeta}_{\infty}\|_{\Hc)}+\|\underbrace{\bm{\zeta}_{\infty}-\nabla u_\infty}_{=0}\|_{\Hc)}+\|\nabla u_{\infty}-\nabla u\|_{\Le}\leq \dfrac{c}{\Lc^2} \, . $
\end{proof}
\begin{remark}
	We can weaken the assumptions of \cref{thm:limit_gradu_zeta} to $\Gamma_D^u=\Gamma_D^\zeta\neq\emptyset$. Further, also non-homogeneous Dirichlet data can be considered, provided the consistent coupling condition $\langle \bm{\zeta} , \, \bm{\tau} \rangle= \langle \nabla \widetilde{u} , \, \bm{\tau}\rangle$ on $\Gamma_D^{\zeta}$ holds. 
	
	From the proof of \cref{thm:limit_gradu_zeta} we obtain from the existence of a potential such that $\bm{\zeta}=\nabla\Psi$. Thus, $\bm{\zeta}$ is expected to be in $\Hc,\Omega)$ as in general $\nabla\Psi\notin [\Hone(\Omega)]^2$ for $\Psi\in\Hone(\Omega)$.
\end{remark}

\subsection{Discrete case}\label{ch:3}
Motivated by the de' Rham complex (see \cref{fig:deRham}) 
\begin{figure}
	\centering
	\begin{tikzpicture}[scale = 0.6][line cap=round,line join=round,>=triangle 45,x=1.0cm,y=1.0cm]
	\clip(3.4,3.5) rectangle (16,10);
	\draw (4.011829363359278,9) node[anchor=north west] {$\mathit{H}^1$};
	\draw (9.019960885184046,9) node[anchor=north west] {$\mathit{H} (\text{curl})$};
	\draw (9.019960885184046,4.5) node[anchor=north west] {$\mathit{U}^h$};
	\draw (4.033698933323579,4.5) node[anchor=north west] {$\mathit{V}^h$};
	\draw (14.531092516187721,9) node[anchor=north west] {$\mathit{L}^2$};
	\draw (14.531092516187721,4.5) node[anchor=north west] {$\mathit{Q}^h$};
	\draw [->,line width=1.5pt] (5.5,8.5) -- (8.5,8.5);
	\draw [->,line width=1.5pt] (4.5,7.5) -- (4.5,5.);
	\draw [->,line width=1.5pt] (5.5,4.) -- (8.5,4.);
	\draw [->,line width=1.5pt] (9.5,7.5) -- (9.5,5.);
	\draw [->,line width=1.5pt] (11.5,8.5) -- (14.,8.5);
	\draw [->,line width=1.5pt] (15.,7.5) -- (15.,5.);
	\draw [->,line width=1.5pt] (10.5,4.) -- (14.,4.);
	\draw (3.2306988241446713,6.8) node[anchor=north west] {$\Pi_g$};
	\draw (8.216960776005138,6.8) node[anchor=north west] {$\Pi_c$};
	\draw (13.728092407008813,6.8) node[anchor=north west] {$\Pi_0$};
	\draw (6.023829800074905,9.54081254265689) node[anchor=north west] {$\nabla$};
	\draw (12.016091970293186,9.54081254265689) node[anchor=north west] {$\text{curl }$};
	\draw (6.023829800074905,5.035681130011) node[anchor=north west] {$\nabla$};
	\draw (12.016091970293186,5.035681130011) node[anchor=north west] {$\text{curl }$};
	\end{tikzpicture}
	\caption{The de' Rham complex in two dimensions depicting Hilbert spaces and approximation spaces connected by differential and interpolation operators. The kernel of one differential operator is exactly the range of the previous differential operator on its space and the differential and projection operators commute.}
	\label{fig:deRham}
\end{figure}
we formulate a finite element combining base functions from both $\Hone(\Omega)$ and $\Hc,\Omega)$ (and $\Le(\Omega)$ for the mixed formulation) setting
\begin{gather}
u^h, \, \delta u^h \in \V^h \subset \Hone(\Omega) \, , \quad \bm{\zeta}^h , \, \delta \bm{\zeta}^h \in \U^h \subset \Hc, \Omega) \, , \quad m^h ,\delta m^h \in \Q^h \subset \Le(\Omega) \, .
\end{gather}
Throughout this work we will use meshes consisting of quadrilaterals. On each element we denote the set of quadrilateral polynomials by $\Q^{n,m}=\text{span}\{ x^ky^j\,|\, 0\leq k\leq n,\, 0\leq j\leq m\}$, compare also \cref{eq:Q}, and further the set of N\'{e}d\'{e}lec ansatz functions by
\begin{align}
\Pe^k = \begin{bmatrix} \Q^{k-1,k} \\ \Q^{k,k-1}\end{bmatrix}\,.\label{eq:Pe}
\end{align}

We start with the Lax--Milgram setting by defining $\X^h=V^h\times U^h$. We note that solvability of the discretized problem follows directly from the continuous one as $\X^h\subset \X$. Using Cea's lemma for the quasi-best approximation
\begin{align}
	\|\{ u,\bm{\zeta} \} - \{ u^h, \bm{\zeta}^h \} \|_{\X} \leq \dfrac{\alpha}{\beta}\inf\limits_{\{\delta u^h,\delta\bm{\zeta}^h\}\in \X^h} \| \{u, \bm{\zeta}\} - \{\delta u^h, \delta \bm{\zeta}^h\} \|_{\X} \, ,
\end{align}
we can generate convergence estimates a priori. 
\begin{lemma}
	Assume a smooth exact solution $\{u,\bm{\zeta}\}\in X$. Further, if on each element $\Q^{k,k}\subset V^h$ and $\Pe^k\subset U^h$, then the discrete solution $\{u^h, \bm{\zeta}^h\}\in X^h$ converges with the optimal convergence rate
	\begin{align}
	\|\{ u,\bm{\zeta} \} - \{ u^h, \bm{\zeta}^h \} \|_{\X} \leq c(\Lc^2, \mue, \mumi, \muma) \, h^k \, .\label{eq:conv_rate_primal}
	\end{align}
\end{lemma}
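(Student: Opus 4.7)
The plan is to reduce the discretization error to an interpolation error using the Cea-type quasi-best approximation bound displayed immediately before the lemma, and then to apply off-the-shelf polynomial approximation estimates for each of the two conforming component spaces separately. Because $X^h\subset X$, we have conformity and Cea's lemma applies with continuity constant $\alpha$ from \eqref{eq:conti} and coercivity constant $\beta$ from \cref{thm:primal_hcurl_solve}.

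First, I would choose concrete interpolants out of the de' Rham diagram in \cref{fig:deRham}: let $\Pi_g u \in V^h$ be the standard nodal interpolant built from the tensor-product space $\Q^{k,k}$, and let $\Pi_c\bm{\zeta}\in U^h$ be the canonical N\'ed\'elec interpolant onto $\Pe^k$. Plugging $\{\Pi_g u,\Pi_c\bm{\zeta}\}$ into the infimum on the right-hand side of Cea's estimate gives
\begin{align*}
\|\{u,\bm{\zeta}\}-\{u^h,\bm{\zeta}^h\}\|_X \leq \tfrac{\alpha}{\beta}\Bigl(\|u-\Pi_g u\|_{\Hone} + \|\bm{\zeta}-\Pi_c\bm{\zeta}\|_{\Hc)}\Bigr).
\end{align*}

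Second, I would invoke the classical polynomial approximation estimates for each interpolant on shape-regular quadrilateral meshes. Since $\Q^{k,k}$ contains all polynomials of total degree $k$ in both variables, one has for smooth $u$ the well-known bound $\|u-\Pi_g u\|_{\Hone}\leq c\,h^k|u|_{H^{k+1}(\Omega)}$. Analogously, the N\'ed\'elec interpolant onto $\Pe^k$ satisfies the commuting-diagram estimate $\|\bm{\zeta}-\Pi_c\bm{\zeta}\|_{\Hc)}\leq c\,h^k\bigl(|\bm{\zeta}|_{H^k(\Omega)}+|\curl\bm{\zeta}|_{H^k(\Omega)}\bigr)$, which uses precisely the commutativity of the differential and projection operators depicted in \cref{fig:deRham}. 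Combining these two estimates and absorbing the smoothness seminorms of $u$ and $\bm{\zeta}$ into the constant produces the claimed $h^k$-rate.

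Third, I would track the material dependence explicitly: from the proof of \cref{thm:primal_hcurl_solve} the continuity constant scales like $\alpha\sim \max\{2\mue,2\mumi,\muma\Lc^2\}$ while $\beta$ depends on $\min\{2\mue(1-\varepsilon),2\mue(1-1/\varepsilon)+2\mumi,\muma\Lc^2\}$, so the overall prefactor $\alpha/\beta$ grows in $\Lc^2$ but is otherwise harmless, which matches the stated dependence $c(\Lc^2,\mue,\mumi,\muma)$. The only nontrivial point is ensuring that the N\'ed\'elec interpolation estimate on quadrilaterals is available at rate $h^k$; on affine (parallelogram) meshes this is classical, while on general bilinearly-mapped quadrilaterals the correct argument uses the covariant Piola transform treated in \cref{ch:4}. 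I would therefore either restrict to the affine case or cite the Piola-transform analysis of Section 4 to justify the approximation bound on the physical element. Everything else is routine.
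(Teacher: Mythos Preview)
Your proposal is correct and follows essentially the same route as the paper: Cea's lemma applied to the conforming discretization, then insertion of the de' Rham interpolants $\Pi_g$, $\Pi_c$ and use of the commuting property $\curl\Pi_c=\Pi_0\curl$ to split the $\Hc)$-error into two $\Le$-interpolation errors, each bounded by $h^k$. The paper writes this out slightly more explicitly by isolating the step $\|\curl\bm{\zeta}-\curl\Pi_c\bm{\zeta}\|_{\Le}=\|(\mathrm{id}-\Pi_0)\curl\bm{\zeta}\|_{\Le}$, whereas you cite the resulting N\'ed\'elec estimate directly; your additional remarks on tracking $\alpha/\beta$ and on the Piola transform for non-affine quadrilaterals are sound extras that the paper does not spell out.
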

\begin{proof}
By inserting the interpolation operators associated through the commuting diagram we find 
\begin{align}
	\|\{ u,\bm{\zeta} \} - \{ u^h, \bm{\zeta}^h \} \|_{\X}^2 & \leq c \inf\limits_{\{\delta u^h,\delta\bm{\zeta}^h\}\in \X^h} \| \{u, \bm{\zeta}\} - \{\delta u^h, \delta \bm{\zeta}^h\} \|_{\X}^2 \notag \\
	& \leq c \, \Big ( \|u - \Pi_g u \|_{\Hone}^2 + \|\bm{\zeta} - \Pi_c \bm{\zeta} \|_{\Le}^2 + \| \curl\bm{\zeta} - \curl\Pi_c \bm{\zeta} \|_{\Le}^2 \Big ) \notag \\
	& = c \, \Big ( \|u - \Pi_g u \|_{\Hone}^2 + \|\bm{\zeta} - \Pi_c \bm{\zeta} \|_{\Le}^2 + \| (\text{id}-\Pi_0) \curl\bm{\zeta}\|_{\Le}^2 \Big ) \notag \\
	& \leq c \, h^{2k} \Big (|u|_{\mathit{H}^{k+1}}^2 + |\bm{\zeta}|_{\mathit{H}^k}^2 + |\curl\bm{\zeta}|_{\mathit{H}^k}^2 \Big ) \, ,
\end{align}
where $|\cdot|_{\mathit{H}^k}$ denotes the standard Sobolev semi-norm.
\end{proof}

Note that the constant $c$ in \cref{eq:conv_rate_primal} depends on $\Lc$. One may prove robust estimates in this setting. We, however, test for robustness with respect to $\Lc$ in the context of mixed methods and use the equivalence of both.

In general the solvability of the discretized mixed problem does not follow from the continuous one. However, thanks to the commuting property of the de' Rham complex, the discrete kernel coercivity and the LBB condition follow immediately. Thus, we obtain the quasi-best approximation error
\begin{align}
\|\{ u,\bm{\zeta} \} - \{ u^h, \bm{\zeta}^h \} \|_{\X} + \|m-m^h\|_{\Le} \leq c\inf\limits_{(\{\delta u^h,\delta\bm{\zeta}^h\},\delta \omega^h)\in \X^h\times Q^h} \Big(\| \{u, \bm{\zeta}\} - \{\delta u^h, \delta \bm{\zeta}^h\} \|_{\X} + \|m-\delta m^h\|_{\Le}\Big),
\end{align}
where $c$ is independent of $\Lc$.
\begin{lemma}
	Assume that the exact solution $(\{u,\bm{\zeta}\},m)\in \X\times\Le(\Omega)$ of \cref{eq:mixed_problem} is smooth and that on each element $\Q^{k,k}\subset V^h$, $\Pe^k\subset U^h$, and $\Q^{k-1,k-1}\subset Q^h$. Then the discrete solution $(\{u^h,\bm{\zeta}^h\},m^h)\in X_h\times Q_h$ satisfies the optimal convergence rate independent of $\Lc$ 
	\begin{align}
	\|\{ u,\bm{\zeta} \} - \{ u^h, \bm{\zeta}^h \} \|_{\X} + \|m-m^h\|_{\Le} \leq c \, h^k \, .\label{eq:mixed_method_conv_rate}
	\end{align} 
	Additionally, with $\{u_{\infty},\bm{\zeta}_{\infty}\}$ the (smooth) solution of the limit problem we obtain
	\begin{align}
	\|\{ u_{\infty},\bm{\zeta}_{\infty} \} - \{ u^h, \bm{\zeta}^h \} \|_{\X} + \|m_{\infty}-m^h\|_{\Le} \leq \dfrac{c_1}{\Lc^2} + c_2 \, h^k\,.\label{eq:conv_lc_h}
	\end{align}
\end{lemma}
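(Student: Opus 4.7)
The plan is to invoke the extended Brezzi framework in the discrete setting and then combine with the continuous $\Lc$-convergence result already established in \cref{eq:mixed_method_conv_Lc}. The first step is to verify the discrete analogues of the hypotheses used in the continuous existence/uniqueness proof of \cref{eq:mixed_problem}: continuity of $a,b,c$ on $X^h\times X^h$ and $X^h\times Q^h$ (inherited from the continuous forms), non-negativity of $a$ and $c$ (trivial), discrete kernel coercivity of $a$, and a discrete LBB condition for $b$, both uniform in $\Lc$. The discrete kernel is $\ker(b^h)=\{\{u^h,\bm{\zeta}^h\}\in X^h\,|\,\int_\Omega \langle\curl\bm{\zeta}^h,\delta m^h\rangle\,\dd X=0\ \forall \delta m^h\in Q^h\}$, and since $\curl \Pe^k\subset \Q^{k-1,k-1}\subset Q^h$, the commuting diagram of \cref{fig:deRham} implies $\curl\bm{\zeta}^h\in Q^h$, so $\curl\bm{\zeta}^h=0$ in the discrete kernel; coercivity on this kernel then follows verbatim from the argument of \cref{thm:primal_hcurl_solve} applied to $X^h\subset X$. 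The discrete LBB is obtained by the same Fortin-type construction as in the continuous proof, now using the N\'{e}d\'{e}lec interpolant $\Pi_c$: given $m^h\in Q^h$, lift it to $\bm{\zeta}\in\Hcz,\Omega)$ with $\curl\bm{\zeta}=m^h$ and $\|\bm{\zeta}\|_{\Le}\leq c\|m^h\|_{\Le}$, then set $\bm{\zeta}^h=\Pi_c\bm{\zeta}$ so that $\curl\bm{\zeta}^h=\Pi_0\curl\bm{\zeta}=\Pi_0 m^h=m^h$, with the interpolation operator bounded independently of $h$ on smooth enough lifts.

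Once both conditions hold, Brezzi's theorem yields the quasi-best approximation estimate
\begin{align*}
\|\{u,\bm{\zeta}\}-\{u^h,\bm{\zeta}^h\}\|_X + \|m-m^h\|_{\Le} \leq c \inf_{\{\delta u^h,\delta\bm{\zeta}^h,\delta m^h\}}\Big(\|\{u,\bm{\zeta}\}-\{\delta u^h,\delta\bm{\zeta}^h\}\|_X+\|m-\delta m^h\|_{\Le}\Big),
\end{align*}
with $c$ independent of $\Lc$. I would then choose $\delta u^h=\Pi_g u$, $\delta\bm{\zeta}^h=\Pi_c \bm{\zeta}$, $\delta m^h=\Pi_0 m$ and invoke standard polynomial interpolation estimates on quadrilateral meshes: $\|u-\Pi_g u\|_{\Hone}\leq c h^k|u|_{\mathit{H}^{k+1}}$, $\|\bm{\zeta}-\Pi_c\bm{\zeta}\|_{\Le}\leq c h^k|\bm{\zeta}|_{\mathit{H}^k}$, and crucially $\|\curl(\bm{\zeta}-\Pi_c\bm{\zeta})\|_{\Le}=\|(\mathrm{id}-\Pi_0)\curl\bm{\zeta}\|_{\Le}\leq c h^k|\curl\bm{\zeta}|_{\mathit{H}^k}$ via the commuting diagram, and $\|m-\Pi_0 m\|_{\Le}\leq c h^k|m|_{\mathit{H}^k}$. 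Summing these yields \cref{eq:mixed_method_conv_rate}.

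For the second estimate \cref{eq:conv_lc_h}, I apply the triangle inequality in the product norm against the continuous solution $(\{u,\bm{\zeta}\},m)$ of \cref{eq:mixed_problem},
\begin{align*}
\|\{u_\infty,\bm{\zeta}_\infty\}-\{u^h,\bm{\zeta}^h\}\|_X+\|m_\infty-m^h\|_{\Le} &\leq \|\{u_\infty,\bm{\zeta}_\infty\}-\{u,\bm{\zeta}\}\|_X+\|m_\infty-m\|_{\Le}\\
&\quad + \|\{u,\bm{\zeta}\}-\{u^h,\bm{\zeta}^h\}\|_X+\|m-m^h\|_{\Le},
\end{align*}
and bound the first line by $c_1/\Lc^2$ using \cref{eq:mixed_method_conv_Lc} and the second line by $c_2 h^k$ using \cref{eq:mixed_method_conv_rate}.

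The main obstacle is the $\Lc$-robustness of the discrete inf-sup, because a naive reuse of the continuous LBB would lose uniformity through the dependence of the Fortin operator on the lift. The resolution is precisely the de Rham structure: $\curl\Pe^k=\Q^{k-1,k-1}$ on quadrilaterals and $\Pi_0\circ\curl=\curl\circ\Pi_c$, which gives a Fortin operator whose norm is bounded by mesh-independent constants, thereby transferring the continuous LBB to the discrete level without any $\Lc$-dependent constants leaking in.
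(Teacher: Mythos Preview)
Your proposal is correct and follows essentially the same approach as the paper: the paper establishes discrete kernel coercivity and the discrete LBB condition via the commuting de~Rham complex (in the text preceding the lemma), obtains the quasi-best approximation, and then inserts the interpolants $\Pi_g,\Pi_c,\Pi_0$ for \cref{eq:mixed_method_conv_rate}, while \cref{eq:conv_lc_h} is obtained exactly as you do by adding and subtracting the continuous solution and invoking \cref{eq:mixed_method_conv_Lc}. One minor remark: in your final paragraph the issue with the discrete inf-sup is uniformity in $h$, not in $\Lc$, since the form $b$ is $\Lc$-independent; your Fortin argument via $\Pi_0\circ\curl=\curl\circ\Pi_c$ is the right device for that.
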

\begin{proof}
	Using the interpolation operators $\Pi_g$, $\Pi_c$, and $\Pi_0$ gives estimate \cref{eq:mixed_method_conv_rate}. Inequality \cref{eq:conv_lc_h} follows immediately by adding and subtracting the solution of the corresponding continuous solution $(\{u,\bm{\zeta}\},m)\in \X\times\Le(\Omega)$ for a fixed $\Lc$, using triangle inequality, \cref{eq:mixed_method_conv_Lc} and \cref{eq:mixed_method_conv_rate}.
\end{proof}

Inequality \cref{eq:conv_lc_h} states that, as long as the discretization error is not reached, we have quadratic convergence to the limit case $\lim \Lc\to\infty$. Due to the equivalence of the primal formulation \cref{eq:weak} and the mixed \cref{eq:mixed_problem} we can deduce that the solution of \cref{eq:weak} is also robust with respect to $\Lc$. As we will see in the numerical examples, the mixed formulation is better suited for extremely large values of $\Lc$ due to rounding errors.

\section{Finite element formulations} \label{ch:4}
\subsection{Appropriate base functions}
In the following we demonstrate the construction of the hybrid element in the linear case. The finite elements for the mixed formulation are employed directly using the open source finite element library NETGEN/NGSolve\footnote{www.ngsolve.org} \cite{Sch1997,Sch2014}. 

For the mapping of $x$ and $y$, see \cref{fig:amp}, we make use of linear quadrilateral Lagrange nodal base functions
\begin{gather}
\begin{align}
&N_1(\xi,\eta) = \dfrac{1}{4}(\xi - 1)(\eta-1) \, ,  \qquad
N_2(\xi,\eta) = \dfrac{1}{4}(\xi + 1)(1-\eta) \, , \nonumber \\
&N_3(\xi,\eta) = \dfrac{1}{4}(\xi + 1)(\eta+1) \, ,  \qquad
N_4(\xi,\eta) = \dfrac{1}{4}(1-\xi)(\eta+1) \, ,
\end{align} \\
x  = \bigcup^{n}_{e=1} \underbrace{\begin{bmatrix}
	N_1 & N_2 & N_3 & N_4
	\end{bmatrix} }_{\displaystyle \bm{H}(\xi,\eta)} \underbrace{\begin{bmatrix}
	x_{1} \\
	x_{2} \\
	x_{3} \\
	x_{4}
	\end{bmatrix}}_{\displaystyle \bar{\vb{x}}_e} \, , \qquad
y = \bigcup^{n}_{e=1} \bm{H}\vb{y}_e \, , \qquad  \vb{x} = \begin{bmatrix}
x & y
\end{bmatrix}^T \, ,
\label{eq:H}
\end{gather}
where $n$ is the number of finite elements in the mesh. As shown in \cref{fig:amp}, the elements are mapped via
\begin{gather}
\vb{x} : \Xi \mapsto \Omega \, , \qquad \Xi = [-1,1] \times [-1,1]  \, , \qquad \Omega = \bigcup^{n}_{e=1}  \Omega_e \subset \mathbb{R}^2 \, .
\end{gather}
\begin{figure}
	\centering
	\definecolor{xfqqff}{rgb}{0.4980392156862745,0.,1.}
	\definecolor{qqwwzz}{rgb}{0.,0.4,0.6}
	\begin{tikzpicture}[scale = 0.5][line cap=round,line join=round,>=triangle 45,x=1.0cm,y=1.0cm]
	\clip(-2,-2.5) rectangle (17,7);
	\fill[line width=0.4pt,color=qqwwzz,fill=qqwwzz,fill opacity=0.10000000149011612] (0.,0.) -- (4.,0.) -- (4.,4.) -- (0.,4.) -- cycle;
	\fill[line width=0.4pt,color=qqwwzz,fill=qqwwzz,fill opacity=0.10000000149011612] (12.,0.) -- (16.,2.) -- (14.,6.) -- (11.,4.) -- cycle;
	\draw [line width=0.4pt,color=qqwwzz] (0.,0.)-- (4.,0.);
	\draw [line width=0.4pt,color=qqwwzz] (4.,0.)-- (4.,4.);
	\draw [line width=0.4pt,color=qqwwzz] (4.,4.)-- (0.,4.);
	\draw [line width=0.4pt,color=qqwwzz] (0.,4.)-- (0.,0.);
	\draw [->,line width=1.2pt] (2.,4.) -- (2.,6.);
	\draw [->,line width=1.2pt] (4.,2.) -- (6.,2.);
	\draw [line width=0.4pt] (0.,-1.)-- (4.,-1.);
	\draw [line width=0.4pt] (-1.,0.)-- (-1.,4.);
	\draw [->,line width=1.2pt] (10.,-2.) -- (10.,0.);
	\draw [->,line width=1.2pt] (10.,-2.) -- (12.,-2.);
	\draw [line width=0.4pt,color=qqwwzz] (12.,0.)-- (16.,2.);
	\draw [line width=0.4pt,color=qqwwzz] (16.,2.)-- (14.,6.);
	\draw [line width=0.4pt,color=qqwwzz] (14.,6.)-- (11.,4.);
	\draw [line width=0.4pt,color=qqwwzz] (11.,4.)-- (12.,0.);
	\draw [->,line width=1.2pt,color=xfqqff] (3.,4.) -- (3.,5.);
	\draw [->,line width=1.2pt,color=xfqqff] (4.,0.5) -- (4.,1.5);
	\draw [->,line width=1.2pt,color=xfqqff] (13.012220419365061,0.5061102096825305) -- (14.038157043516751,1.0190785217583755);
	\draw [->,line width=1.2pt,color=xfqqff] (14.62875816842749,4.74248366314502) -- (15.591685024265983,5.223947091064266);
	\draw [color=qqwwzz](0.3,1.4060766687904682) node[anchor=north west] {$\Xi$};
	\draw [color=qqwwzz](12.775319131797966,3.5816191348674904) node[anchor=north west] {$\Omega_e$};
	\draw [color=xfqqff](4.3,1.2) node[anchor=north west] {$\boldsymbol{\varsigma}$};
	\draw [color=xfqqff](2.5,5.9) node[anchor=north west] {$\boldsymbol{\varrho}$};
	\draw (6.184705190446985,2.493847901828979) node[anchor=north west] {$\xi$};
	\draw (1.5,7) node[anchor=north west] {$\eta$};
	\draw (3.55,-1.2) node[anchor=north west] {$1$};
	\draw (-2.5,0.5) node[anchor=north west] {$-1$};
	\draw (-1.9415857857818923,4.413444195426352) node[anchor=north west] {$1$};
	\draw [color=xfqqff](13.591147556576848,0.6542347871314972) node[anchor=north west] {$\boldsymbol{\tau}$};
	\draw [color=xfqqff](15.58,5.74) node[anchor=north west] {$\boldsymbol{\nu}$};
	\draw (12.183443607938775,-1.6) node[anchor=north west] {$x$};
	\draw (9.5,0.9581708669510812) node[anchor=north west] {$y$};
	\draw [color=qqwwzz](2.4,0) node[anchor=north west] {$\Sigma_1$};
	\draw [color=qqwwzz](4,3.5) node[anchor=north west] {$\Sigma_2$};
	\draw [color=qqwwzz](0.5,5) node[anchor=north west] {$\Sigma_3$};
	\draw [color=qqwwzz](-1.1,1.7) node[anchor=north west] {$\Sigma_4$};
	\draw [color=qqwwzz](10.3,2.8) node[anchor=north west] {$\Gamma_1$};
	\draw [color=qqwwzz](14.6,1.4) node[anchor=north west] {$\Gamma_2$};
	\draw [color=qqwwzz](15.4,3.8) node[anchor=north west] {$\Gamma_3$};
	\draw [color=qqwwzz](11.7,6) node[anchor=north west] {$\Gamma_4$};
	\draw [shift={(6.,0.5)},line width=2.pt]  plot[domain=0.8621700546672264:1.7894652726688387,variable=\t]({1.*4.609772228646444*cos(\t r)+0.*4.609772228646444*sin(\t r)},{0.*4.609772228646444*cos(\t r)+1.*4.609772228646444*sin(\t r)});
	\draw [->,line width=2.pt] (8.451054785975783,4.4041427274300835) -- (9.,4.);
	\draw (6.3,6.3) node[anchor=north west] {$\mathbf{x}(\xi,\eta)$};
	\draw (-1,-1.2) node[anchor=north west] {$-1$};
	\draw [line width=1.2pt] (2.,2.)-- (4.,2.);
	\draw [line width=1.2pt] (2.,2.)-- (2.,4.);
	\begin{scriptsize}
	\draw [fill=qqwwzz] (0.,0.) circle (2.5pt);
	\draw [fill=qqwwzz] (4.,0.) circle (2.5pt);
	\draw [fill=qqwwzz] (4.,4.) circle (2.5pt);
	\draw [fill=qqwwzz] (0.,4.) circle (2.5pt);
	\draw [color=black] (0.,-1.)-- ++(-4.5pt,0 pt) -- ++(9.0pt,0 pt) ++(-4.5pt,-4.5pt) -- ++(0 pt,9.0pt);
	\draw [color=black] (4.,-1.)-- ++(-4.5pt,0 pt) -- ++(9.0pt,0 pt) ++(-4.5pt,-4.5pt) -- ++(0 pt,9.0pt);
	\draw [color=black] (-1.,0.)-- ++(-4.5pt,0 pt) -- ++(9.0pt,0 pt) ++(-4.5pt,-4.5pt) -- ++(0 pt,9.0pt);
	\draw [color=black] (-1.,4.)-- ++(-4.5pt,0 pt) -- ++(9.0pt,0 pt) ++(-4.5pt,-4.5pt) -- ++(0 pt,9.0pt);
	\draw [fill=black] (10.,-2.) circle (2.5pt);
	\draw [fill=qqwwzz] (12.,0.) circle (2.5pt);
	\draw [fill=qqwwzz] (16.,2.) circle (2.5pt);
	\draw [fill=qqwwzz] (14.,6.) circle (2.5pt);
	\draw [fill=qqwwzz] (11.,4.) circle (2.5pt);
	\draw [fill=black] (2.,2.) circle (2.5pt);
	\end{scriptsize}
	\end{tikzpicture}
	\caption{Element mapping from the parametric space into the physical space.}
	\label{fig:amp}
\end{figure}
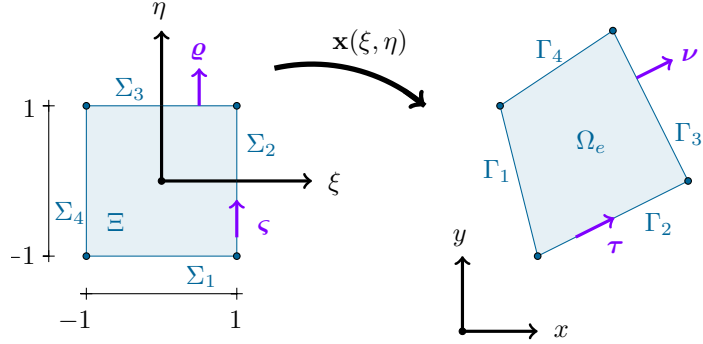
We approximate $u$ according to the isoparametric concept
\begin{gather}
u_e^h = \bm{H}\bar{\vb{u}}_e \, , \qquad u^h = \bigcup^{n}_{e=1} u_e^h  \, .
\label{eq:u}
\end{gather}

However, for $\bm{\zeta}$ we make use of linear N\'{e}d\'{e}lec base functions of the first type for quadrilaterals \cite{Anjam2015, Meunier2010, Nedelec1980, Zaglmayr2006}.
These functions are built around approximations of the curl operator. The corresponding spaces are those of quadrilateral polynomials 
\begin{gather}
p(\xi,\eta) = \Bigg ( \sum_{k=0}^{n} c_k \xi^k \Bigg ) \Bigg ( \sum_{j=0}^{m} d_j \eta^j \Bigg ) \, \in \Q^{n,m} \, .
\label{eq:Q}
\end{gather}
The weak form of the curl in the 2D space is formulated via Greens' formula\footnote{$  \curl (q \, \bm{\zeta}) = \di (\bm{R} \, (q\, \bm{\zeta})) = q \, \curl \bm{\zeta} - \langle \bm{\zeta} , \, \Dc q \rangle  \, , \quad \bm{R} = \begin{bmatrix}
		0 & 1 \\ -1 & 0
		\end{bmatrix} \, . $}
\begin{gather}
\begin{align}
\int_{\Omega}  q  \, \curl\bm{\zeta}  \,\dd X = \oint_{\partial \Omega}  \langle q \, \bm{\zeta}  , \, \bm{\tau} \rangle \,\dd s  
 + \int_{\Omega} \langle \bm{\zeta} , \,  \Dc q \rangle \,\dd X \,  \quad \text{ for all } q \in \C^1 (\Omega, \, \mathbb{R}) \, .
\end{align}
\end{gather}
Therefore, the curl in $\Omega$ is fully determined by its interface and inner rotation field. Consequently, we can decompose the two terms, such that the elements' dofs determine the interpolated field completely. This can be confirmed by setting all dofs to zero, checking for a vanishing field. The corresponding dofs and degrees of the polynomial spaces have been defined by N\'{e}d\'{e}lec \cite{Nedelec1980}.
The element's boundary has been decomposed as ${\partial \Xi =   \Sigma_1\cup\Sigma_2\cup\Sigma_3\cup\Sigma_4 }$. The dofs read
\begin{align}
&4k \text{ edge dofs: } \; \; \quad \qquad f_{ij}(\bm{\vartheta}) = \int_{\Sigma_j}   q_i \, \langle \, \bm{\vartheta} , \, \bm{\varsigma}_j \rangle \,\dd \Sigma \, , &&\bm{\vartheta} \in \Pe^k(\Xi) \,  \quad \text{ for all } q_i \in \mathbb{P}^{k-1}(\Sigma_j) \, , \\[2ex]
&2k(k-1) \text{ cell dofs: } \quad f_{i}(\bm{\vartheta}) = \int_{\Xi} \langle \bm{\vartheta} , \, \vb{q}_i \rangle \,\dd \Xi \, , &&\bm{\vartheta} \in \Pe^k(\Xi) \,  \quad \text{ for all } \vb{q}_i = \begin{bmatrix}
q_1 \\ q_2
\end{bmatrix} \, , \begin{matrix}
q_1 \in \Q^{k-2,k-1}(\Xi) \\ q_2 \in \Q^{k-1,k-2}(\Xi)
\end{matrix} \, , \notag
\end{align}
where $\Pe^k$ and $\Q$ are according to \cref{eq:Pe} and \cref{eq:Q}, and $\mathbb{P}^k$ is the space of polynomials of order $k$.
Since we employ linear N\'{e}d\'{e}lec base functions with $k = 1$, no inner dofs occur.  
The ansatz for the base function reads
\begin{gather}
\bm{\vartheta}_m(\xi, \eta) = \AS{\begin{bmatrix}
		d_0 + d_1 \eta \\
		c_0 + c_1 \xi 
\end{bmatrix}}  \, , \qquad \bm{\vartheta}_m(\xi,\eta) \in \Pe^1(\Xi) \, , \qquad  m = \Big \{1, \,2, \,\dots,\dim(\Pe^1) = 4 \Big \}  \, .
\label{eq:theta}
\end{gather}
Applying the dofs along all edges with the \AS{variable} basis $q_i = 1$
\begin{gather}
f_{ij}(\bm{\vartheta}_m) = \int_{\Sigma_j} q_i \, \langle \bm{\vartheta}_m , \, \bm{\varsigma}_j \rangle \,\dd \Sigma = \delta_{ij} \, ,
\label{eq:dofs}
\end{gather}
we find our base functions
\begin{gather}
\bm{\vartheta}_1 = \dfrac{1}{2} \begin{bmatrix}
1 - \eta \\ 0
\end{bmatrix} \, , \qquad \bm{\vartheta}_2 = \dfrac{1}{2} \begin{bmatrix}
0 \\ 1 + \xi
\end{bmatrix} \, ,  \qquad
\bm{\vartheta}_3 = \dfrac{1}{2} \begin{bmatrix}
- 1 - \eta \\ 0
\end{bmatrix} \, , \qquad \bm{\vartheta}_4 = \dfrac{1}{2} \begin{bmatrix}
0 \\  \xi - 1
\end{bmatrix} \, .
\label{eq:nbase}
\end{gather}
The factor $\nicefrac{1}{2}$ is chosen instead of the resulting $\nicefrac{1}{4}$ as to simplify prescription on the Dirichlet boundary.
The functions are depicted in \cref{fig:basefunctions}.
\begin{figure}
		\centering
		\begin{tikzpicture}[line cap=round,line join=round,>=triangle 45,x=1.0cm,y=1.0cm]
		\node[anchor=south west,inner sep=0] at (0,0) {\includegraphics[width=0.5\linewidth]{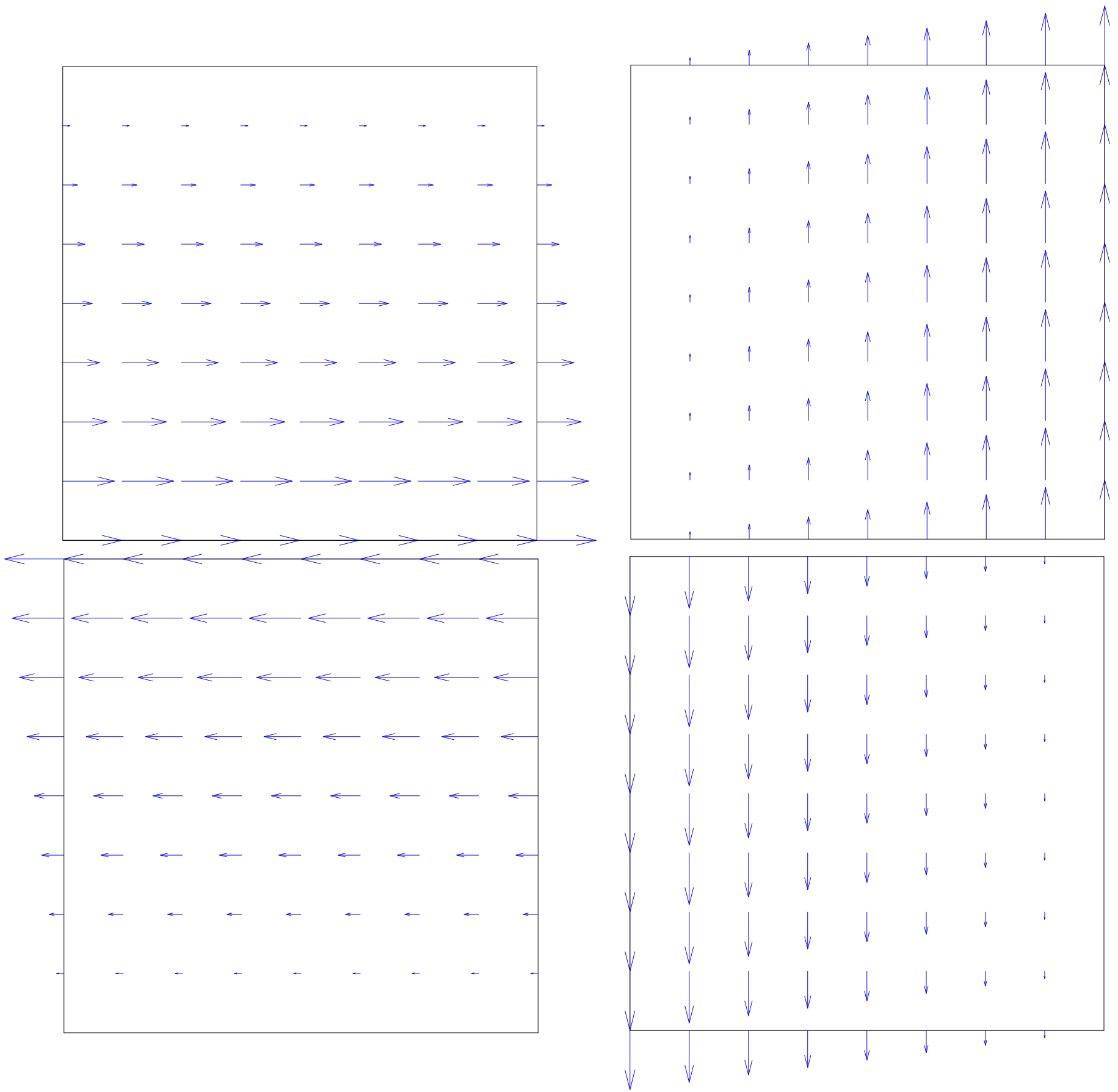}};
		\draw (2,6.05) node[anchor=north west] {$\boldsymbol{\vartheta}_1$};
		\draw (6.5,6.05) node[anchor=north west] {$\boldsymbol{\vartheta}_2$};
		\draw (2,2.75) node[anchor=north west] {$\boldsymbol{\vartheta}_3$};
		\draw (6.5,2.75) node[anchor=north west] {$\boldsymbol{\vartheta}_4$};
		\end{tikzpicture}
		\caption{N\'{e}d\'{e}lec base functions from \cref{eq:nbase} in the parametric space.}
		\label{fig:basefunctions}
\end{figure}

For the mixed formulation involving $m\in\Le(\Omega)$ the corresponding finite element space is given by piece-wise constants, $N_0(\xi,\eta)=1$. To enforce zero mean value, i.e. $m\in\Lez(\Omega)$, a Lagrange multiplier $\lambda\in\mathbb{R}$ has to be used, leading to one additional equation in the final system.

Using higher polynomial orders, we can achieve faster convergence rates and better approximations. The finite element software NGSolve offers the use of hierarchical high order base functions for $\Hone$, $\Hc)$, and $\Le$ spaces \cite{Zaglmayr2006}. We employ NGSolve in our investigation of the mixed formulation with higher order base functions.

\subsection{Covariant Piola transformation}\label{ch:5}
In the previous section we formulated our base functions for the curl in the parametric space. In order to preserve the properties of the base function $\bm{\vartheta}_j$ acting on the curve's tangents $\bm{\varsigma}$ (see \cref{fig:amp}), namely
\begin{gather}
\int_{\Sigma_i} \langle \bm{\vartheta}_j , \, \bm{\varsigma} \rangle \, \dd \Sigma 
= \int_{\Gamma_i} \langle \bm{\theta}_j , \, \bm{\tau} \rangle \, \dd s
= \delta_{ij} \, ,
\end{gather}
where $\bm{\theta}_j$ is the base function in the physical space and $\partial \Omega_e = \Gamma_1 \cup \Gamma_2 \cup \Gamma_3 \cup \Gamma_4$,
the so called covariant Piola transformation is required \cite{Mon03}. The transformation is achieved by considering the push forward of the boundaries' normal vectors
\begin{gather} 
\langle \vb{v} , \, \bm{\nu} \rangle = \det\bm{J}\, \langle \vb{v} , \, \bm{J}^{-T} \bm{\varrho} \rangle \, ,
\label{eq:push}
\end{gather}
where $\bm{J}$ is the Jacobi matrix of the element mappings.
In two dimensions the normal vectors on the element boundary $\bm{\varrho}$ and $\bm{\nu}$ are the $90\degree$ rotation of the tangent vectors given by
\begin{gather}
\bm{\varrho} = \bm{R} \bm{\varsigma} \, , \qquad \bm{\nu} = \bm{R} \bm{\tau}  \, , \qquad \bm{R} = \begin{bmatrix}
0 &  1 \\ -1 & 0
\end{bmatrix} \, .
\label{eq:rot}
\end{gather} 
Using \cref{eq:rot} in \cref{eq:push} results in 
\begin{gather}
 \langle \vb{v} , \, \bm{R} \bm{\tau} \rangle =  \det\bm{J}\, \langle \vb{v} , \, \bm{J}^{-T}  \bm{R} \bm{\varsigma} \rangle  \, ,
\end{gather}
finally yielding the definition of a transformation preserving integration along the tangent
\begin{gather}
\vb{v}_\textrm{0} =  \det\bm{J} \, \bm{R}^T \bm{J}^{-1} \bm{R} \, \vb{v}  \, , \quad \qquad
\vb{v} = \underbrace{\dfrac{1}{\det\bm{J}} \, \bm{R}^T \bm{J}  \bm{R}}_{\displaystyle \bm{J}^{-T}} \vb{v}_\textrm{0} \, .
\label{eq:piola}
\end{gather}
The transformation in \cref{eq:piola} alone cannot guarantee the aligned orientation of base functions on the edges of neighbouring elements \cite{Zaglmayr2006}. In order to achieve conformity we introduce a topological correction function $\psi_j$ based on the global orientation of edges given by node collections as demonstrated in \cref{fig:ori}.
\begin{figure}
	\centering
	\begin{gather}
	\overbrace{\begin{bmatrix}1&2\\2&3\\3&6\\4&1\\4&5\\5&6\\5&2\\3&6\end{bmatrix}}^{\substack{\textrm{Global edge} \\ \textrm{orientation array}}} \quad \begin{matrix}
	\overbrace{\begin{bmatrix}1&2\\2&5\\5&4\\4&1\end{bmatrix}}^{A_1 \textrm{ edge array } \vb{E}_1} \\[10ex]
	\overbrace{\begin{bmatrix}5&2\\2&3\\3&6\\6&5\end{bmatrix}}^{A_2 \textrm{ edge array } \vb{E}_2}
	\end{matrix}
	\qquad
	\begin{matrix}
	\overbrace{\begin{bmatrix}\psi_1=1\\\psi_2=-1\\\psi_3=-1\\\psi_4=1\end{bmatrix}}^{\psi_j(\vb{E}_1)} \\[10ex]
	\overbrace{\begin{bmatrix}\psi_1=1\\\psi_2=1\\\psi_3=1\\\psi_4=-1\end{bmatrix}}^{\psi_j(\vb{E}_2)}
	\end{matrix} \nonumber
	\end{gather}
	\definecolor{qqqqff}{rgb}{0.,0.,1.}
	\definecolor{xfqqff}{rgb}{0.4980392156862745,0.,1.}
	\definecolor{qqwwzz}{rgb}{0.,0.4,0.6}
	\begin{tikzpicture}[scale = 0.95][line cap=round,line join=round,>=triangle 45,x=1.0cm,y=1.0cm]
	\clip(-0.31965533234704546,-5.547856706731499) rectangle (8,10.7674166714165);
	\fill[line width=0.4pt,color=qqwwzz,fill=qqwwzz,fill opacity=0.10000000149011612] (1.,1.) -- (4.,2.) -- (3.,4.) -- (1.,3.) -- cycle;
	\fill[line width=0.4pt,color=qqwwzz,fill=qqwwzz,fill opacity=0.10000000149011612] (3.,4.) -- (6.,4.) -- (6.,1.) -- (4.,2.) -- cycle;
	\fill[line width=0.4pt,color=qqwwzz,fill=qqwwzz,fill opacity=0.10000000149011612] (1.,6.) -- (4.,6.) -- (4.,9.) -- (1.,9.) -- cycle;
	\fill[line width=0.4pt,color=qqwwzz,fill=qqwwzz,fill opacity=0.10000000149011612] (0.9999999999820135,-4.004896197649135) -- (3.9999999999820135,-3.004896197649132) -- (2.9999999999820135,-1.0048961976491353) -- (0.9999999999820135,-2.004896197649133) -- cycle;
	\fill[line width=0.4pt,color=qqwwzz,fill=qqwwzz,fill opacity=0.10000000149011612] (2.9999999999820135,-1.0048961976491353) -- (5.9999999999820135,-1.0048961976491353) -- (5.9999999999820135,-4.004896197649135) -- (3.9999999999820135,-3.004896197649132) -- cycle;
	\draw [line width=0.4pt,color=qqwwzz] (1.,1.)-- (4.,2.);
	\draw [line width=0.4pt,color=qqwwzz] (4.,2.)-- (3.,4.);
	\draw [line width=0.4pt,color=qqwwzz] (3.,4.)-- (1.,3.);
	\draw [line width=0.4pt,color=qqwwzz] (1.,3.)-- (1.,1.);
	\draw [line width=0.4pt,color=qqwwzz] (3.,4.)-- (6.,4.);
	\draw [line width=0.4pt,color=qqwwzz] (6.,4.)-- (6.,1.);
	\draw [line width=0.4pt,color=qqwwzz] (6.,1.)-- (4.,2.);
	\draw [line width=0.4pt,color=qqwwzz] (4.,2.)-- (3.,4.);
	\draw (0.9985576220934834,0.15) node[anchor=north west] {$x$};
	\draw (-0.25,1.5) node[anchor=north west] {$y$};
	\draw [color=xfqqff](1.7054254382427527,3.431275011921382) node[anchor=north west] {$\boldsymbol{J}^{-T}\boldsymbol{\vartheta}_2$};
	\draw [color=qqwwzz](1.4379619402403265,2.208584735338862) node[anchor=north west] {$\Omega_1$};
	\draw [color=qqwwzz](5.010510092129876,2.647989053485705) node[anchor=north west] {$\Omega_2$};
	\draw (1.074975764379891,1.043208065471148) node[anchor=north west] {$1$};
	\draw (3.864237957833764,1.8838076306216303) node[anchor=north west] {$2$};
	\draw (6.1185731552827844,1.1196262077575554) node[anchor=north west] {$3$};
	\draw (0.5591533039466405,3.4694840830645854) node[anchor=north west] {$4$};
	\draw (2.5842340745364387,4.45) node[anchor=north west] {$5$};
	\draw (6.156782226425988,4.157247363642253) node[anchor=north west] {$6$};
	\draw [->,line width=1.2pt,color=xfqqff] (3.,3.8) -- (3.4,2.);
	\draw [->,line width=1.2pt,color=qqqqff] (3.,3.8) -- (3.8,2.2);
	\draw [->,line width=1.2pt,color=qqqqff] (3.8,2.2) -- (3.4,2.);
	\draw [->,line width=1.2pt,color=qqqqff] (3.9618886883568507,2.2809443441784256) -- (3.161888688356851,3.8809443441784253);
	\draw [->,line width=1.2pt,color=xfqqff] (3.9618886883568507,2.2809443441784256) -- (4.2,4.4);
	\draw [->,line width=1.2pt,color=qqqqff] (3.161888688356851,3.8809443441784253) -- (4.2,4.4);
	\draw [color=xfqqff](4.322746811552209,3.7942611877818173) node[anchor=north west] {$\boldsymbol{J}^{-T}\boldsymbol{\vartheta}_3$};
	\draw [color=qqwwzz](1.495275546955132,6.9) node[anchor=north west] {$\Xi$};
	\draw [color=xfqqff](1.8582617228155676,5.95) node[anchor=north west] {$\boldsymbol{\vartheta}_1$};
	\draw [color=xfqqff](4.0,7.15) node[anchor=north west] {$\boldsymbol{\vartheta}_2$};
	\draw [color=xfqqff](2.85,9.5) node[anchor=north west] {$\boldsymbol{\vartheta}_3$};
	\draw [color=xfqqff](0.4,8.3) node[anchor=north west] {$\boldsymbol{\vartheta}_4$};
	\draw (5.05,7.7) node[anchor=north west] {$\xi$};
	\draw (2.25,10.5) node[anchor=north west] {$\eta$};
	\draw [line width=0.4pt,color=qqwwzz] (1.,6.)-- (4.,6.);
	\draw [line width=0.4pt,color=qqwwzz] (4.,6.)-- (4.,9.);
	\draw [line width=0.4pt,color=qqwwzz] (4.,9.)-- (1.,9.);
	\draw [line width=0.4pt,color=qqwwzz] (1.,9.)-- (1.,6.);
	\draw [->,line width=1.2pt,color=xfqqff] (1.5,6.) -- (3.5,6.);
	\draw [->,line width=1.2pt,color=xfqqff] (4.,6.5) -- (4.,8.5);
	\draw [->,line width=1.2pt,color=xfqqff] (3.5,9.) -- (1.5,9.);
	\draw [->,line width=1.2pt,color=xfqqff] (1.,8.5) -- (1.,6.5);
	\draw [->,line width=1.2pt] (2.5,7.5) -- (5.,7.5); 
	\draw [->,line width=1.2pt] (2.5,7.5) -- (2.5,10.);
	\draw [shift={(4.25,5.5)},line width=2.pt]  plot[domain=-0.9272952180016123:0.9272952180016122,variable=\t]({1.*1.25*cos(\t r)+0.*1.25*sin(\t r)},{0.*1.25*cos(\t r)+1.*1.25*sin(\t r)});
	\draw [->,line width=2.pt] (5.1544257262252495,4.637147691813988) -- (5.,4.5);
	\draw (5.698273372707543,5.991282778516033) node[anchor=north west] {$\boldsymbol{J}^{-T}$};
	\draw [line width=0.4pt,color=qqwwzz] (0.9999999999820135,-4.004896197649135)-- (3.9999999999820135,-3.004896197649132);
	\draw [line width=0.4pt,color=qqwwzz] (3.9999999999820135,-3.004896197649132)-- (2.9999999999820135,-1.0048961976491353);
	\draw [line width=0.4pt,color=qqwwzz] (2.9999999999820135,-1.0048961976491353)-- (0.9999999999820135,-2.004896197649133);
	\draw [line width=0.4pt,color=qqwwzz] (0.9999999999820135,-2.004896197649133)-- (0.9999999999820135,-4.004896197649135);
	\draw [line width=0.4pt,color=qqwwzz] (2.9999999999820135,-1.0048961976491353)-- (5.9999999999820135,-1.0048961976491353);
	\draw [line width=0.4pt,color=qqwwzz] (5.9999999999820135,-1.0048961976491353)-- (5.9999999999820135,-4.004896197649135);
	\draw [line width=0.4pt,color=qqwwzz] (5.9999999999820135,-4.004896197649135)-- (3.9999999999820135,-3.004896197649132);
	\draw [line width=0.4pt,color=qqwwzz] (3.9999999999820135,-3.004896197649132)-- (2.9999999999820135,-1.0048961976491353);
	\draw [color=qqwwzz](1.4379619402403265,-2.7968035844208283) node[anchor=north west] {$\Omega_1$};
	\draw [color=qqwwzz](5.010510092129876,-2.3573992662739855) node[anchor=north west] {$\Omega_2$};
	\draw (1.074975764379891,-3.9621802542885427) node[anchor=north west] {$1$};
	\draw (3.864237957833764,-3.12158068913806) node[anchor=north west] {$2$};
	\draw (6.1185731552827844,-3.8857621120021353) node[anchor=north west] {$3$};
	\draw (0.5591533039466405,-1.5359042366951048) node[anchor=north west] {$4$};
	\draw (2.5842340745364387,-0.55538) node[anchor=north west] {$5$};
	\draw (6.156782226425988,-0.8481409561174375) node[anchor=north west] {$6$};
	\draw [->,line width=1.2pt,color=qqqqff] (3.9618886883388638,-2.7239518534707057) -- (3.1618886883388653,-1.1239518534707098);
	\draw [->,line width=1.2pt,color=xfqqff] (3.9618886883388638,-2.7239518534707057) -- (4.29340993667295,-0.5581912293036662);
	\draw [->,line width=1.2pt,color=qqqqff] (3.1618886883388653,-1.1239518534707098) -- (4.29340993667295,-0.558191229303666);
	\draw [->,line width=0.8pt] (0.,0.) -- (1.,0.);
	\draw [->,line width=0.8pt] (0.,0.) -- (0.,1.);
	\draw [shift={(5.906118359711166,-0.019104535571601747)},line width=2.pt]  plot[domain=-0.9272952180016123:0.9272952180016122,variable=\t]({1.*1.25*cos(\t r)+0.*1.25*sin(\t r)},{0.*1.25*cos(\t r)+1.*1.25*sin(\t r)});
	\draw [->,line width=2.pt] (6.8105440859364155,-0.8819568437576141) -- (6.656118359711166,-1.0191045355716017);
	\draw (7.360367967436906,0.39365385603668424) node[anchor=north west] {$\psi_j$};
	\draw [->,line width=1.2pt,color=qqqqff] (2.9999999999820135,-1.2048961976491357) -- (2.6,-1.4);
	\draw [->,line width=1.2pt,color=xfqqff] (3.798041520925957,-2.800979239537021) -- (2.6,-1.4);
	\draw [->,line width=1.2pt,color=qqqqff] (3.798041520925957,-2.800979239537021) -- (2.9999999999820135,-1.2048961976491357);
	\draw [->,line width=0.8pt] (0.,-5.) -- (1.,-5.);
	\draw [->,line width=0.8pt] (0.,-5.) -- (0.,-4.);
	\draw (0.9794530865218816,-4.85) node[anchor=north west] {$x$};
	\draw (-0.25,-3.5) node[anchor=north west] {$y$};
	\draw [color=xfqqff](1.208707513381104,-1.8797858769839386) node[anchor=north west] {$\psi_2\,\boldsymbol{J}^{-T}\boldsymbol{\vartheta}_2$};
	\draw [color=xfqqff](4.20,-1.3448588809790862) node[anchor=north west] {$\psi_3\,\boldsymbol{J}^{-T}\boldsymbol{\vartheta}_3$};
	\begin{scriptsize}
	\draw [fill=black] (0.,0.) circle (2.5pt);
	\draw [fill=qqwwzz] (1.,1.) circle (2.5pt);
	\draw [fill=qqwwzz] (4.,2.) circle (2.5pt);
	\draw [fill=qqwwzz] (3.,4.) circle (2.5pt);
	\draw [fill=qqwwzz] (1.,3.) circle (2.5pt);
	\draw [fill=qqwwzz] (6.,1.) circle (2.5pt);
	\draw [fill=qqwwzz] (6.,4.) circle (2.5pt);
	\draw [fill=qqwwzz] (1.,6.) circle (2.5pt);
	\draw [fill=qqwwzz] (4.,6.) circle (2.5pt);
	\draw [fill=qqwwzz] (4.,9.) circle (2.5pt);
	\draw [fill=qqwwzz] (1.,9.) circle (2.5pt);
	\draw [fill=black] (2.5,7.5) circle (2.5pt);
	\draw [fill=qqwwzz] (0.9999999999820135,-4.004896197649135) circle (2.5pt);
	\draw [fill=qqwwzz] (3.9999999999820135,-3.004896197649132) circle (2.5pt);
	\draw [fill=qqwwzz] (2.9999999999820135,-1.0048961976491353) circle (2.5pt);
	\draw [fill=qqwwzz] (0.9999999999820135,-2.004896197649133) circle (2.5pt);
	\draw [fill=qqwwzz] (5.9999999999820135,-4.004896197649135) circle (2.5pt);
	\draw [fill=qqwwzz] (5.9999999999820135,-1.0048961976491353) circle (2.5pt);
	\draw [fill=black] (0.,-5.) circle (2.5pt);
	\end{scriptsize}
	\end{tikzpicture}
	\caption{Covariant Piola transformation and topological correction function $\psi_j$ mapping of N\'{e}d\'{e}lec base functions from the parametric space into the physical space.}
	\label{fig:ori}
\end{figure}
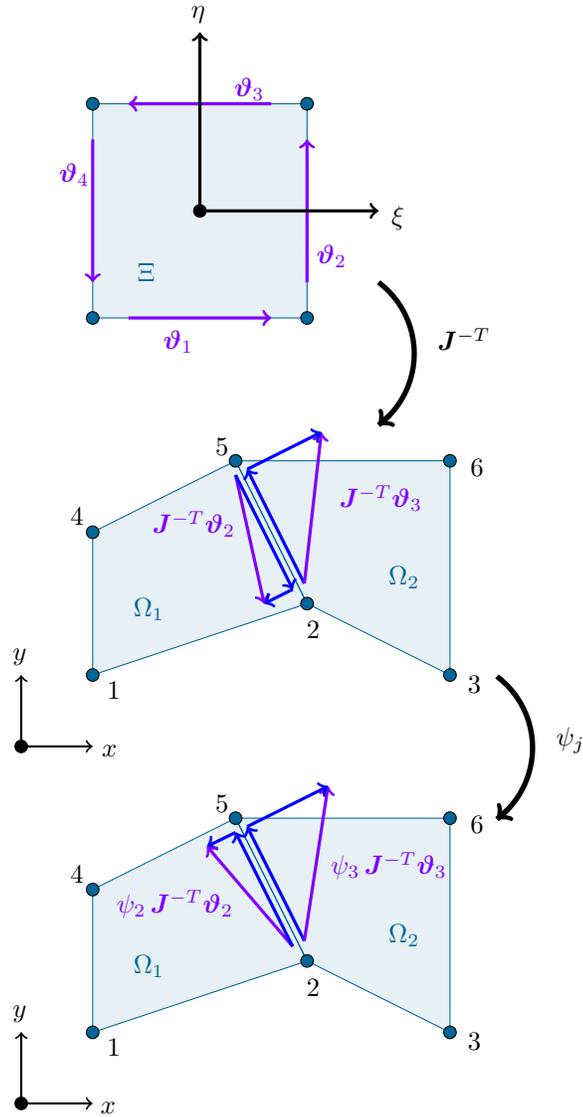
The drawings in \cref{fig:ori} show the different roles of the mapping functions:\begin{enumerate}
	\item The covariant Piola transformation scales the projection onto the edge tangent.
	\item The topological correction function sets a consistent orientation.
\end{enumerate}
Thus, the final form of our edge base functions reads
\begin{gather}
\bm{\theta}_j= \psi_j \bm{J}^{-T} \bm{\vartheta}_j \, , \qquad
\psi_j = \left\{ \begin{matrix}
1  \\ -1  
\end{matrix}   \right . \begin{matrix}
\textrm{orientation is equal} \\ \textrm{else}
\end{matrix} \, .
\label{eq:base}
\end{gather}
Using \cref{eq:base} for the approximation of the microdistortion $\bm{\zeta}$ yields   
\begin{gather}
\bm{\zeta}_e^h = 
\underbrace{\begin{bmatrix}
	\bm{\theta}_1 & \bm{\theta}_2 & \bm{\theta}_3 & \bm{\theta}_4
	\end{bmatrix}}_{\displaystyle \bm{\Theta}}
\underbrace{\begin{bmatrix}
	\zeta_{1} \\ \zeta_{2} \\ \zeta_{3} \\ \zeta_{4}
	\end{bmatrix}}_{\displaystyle \bar{\bm{\zeta}}_{e}} \, , \qquad
\bm{\zeta}^h = \bigcup^{n}_{e=1} \bm{\zeta}_e^h \, .
\label{eq:zeta}
\end{gather} 
For vectors undergoing a covariant Piola transformation, the transformation of the curl operator simplifies to
\begin{align}
	\curl_x \bm{\theta}_j = \dfrac{1}{\det\bm{J}} \psi_j \curl \bm{\vartheta}_j \, .
\end{align}

\subsection{Element stiffness matrices}\label{ch:6}
For ease of presentation we consider only the Lax--Milgram setting. The mixed formulation follows directly with simple adaptations. 

With the approximations in \cref{eq:u} for the displacement field $u$ and in \cref{eq:zeta} for the microdistortion $\bm{\zeta}$ the weak form in \cref{eq:weak} results in
\begin{gather}
\bigcup^{n}_{e=1} (\bm{K}_\textrm{e} + \bm{K}_\textrm{micro} + \bm{K}_\textrm{macro})_e \begin{bmatrix}
\bar{\vb{u}}_e \\
\bar{\bm{\zeta}}_e
\end{bmatrix} = \AS{\bigcup^{n}_{e=1} \begin{bmatrix}
	\bar{\vb{f}}_e \\
	\bar{\bm{\omega}}_e
\end{bmatrix}} \, ,
\end{gather}
where $\bm{K}_\textrm{e}$, $\bm{K}_\textrm{micro}$ and $\bm{K}_\textrm{macro}$ are the element stiffness matrices employing the base function matrices $\bm{H}$ and $\bm{\Theta}$ according to \cref{eq:zeta} and \cref{eq:H}, respectively 
\begin{subequations}
	\begin{align}
	\bm{K}_\textrm{e} &= 2\mue \int_{\Xi} \begin{bmatrix}
	(\nabla \bm{H})^T \nabla\bm{H} &  -(\nabla \bm{H})^T \bm{\Theta} 
	\\ - \bm{\Theta}^T \nabla \bm{H} & \bm{\Theta}^T \bm{\Theta}
	\end{bmatrix} \det\bm{J}\, \dd \Xi \, , \\[2ex]
	\bm{K}_\textrm{micro} &= 2\mumi \int_{\Xi} \begin{bmatrix}
	\bm{O} & \bm{O} \\
	\bm{O} & \bm{\Theta}^T \bm{\Theta}
	\end{bmatrix} \det\bm{J} \,\dd \Xi \, , \\[2ex]
	\bm{K}_\textrm{macro} &= \muma \Lc^2 \int_{\Xi} \begin{bmatrix}
	\bm{O} & \bm{O} \\
	\bm{O} & (\curl \bm{\Theta})^T \curl\bm{\Theta}
	\end{bmatrix} \det\bm{J}\, \dd \Xi \, ,
	\end{align}
\end{subequations}
with $\bm{O} \in \{0 \}^{4\times4}$.
The finite element has 8 degrees of freedom.
\AS{The right-hand side reads 
\begin{align}
	\bar{\vb{f}}_e &= \int_{\Xi} \bm{H}^T f \det \bm{J} \, \dd \Xi \, ,\\
	\bar{\bm{\omega}}_e &= \int_{\Xi} \bm{\Theta}^T \bm{\omega} \det \bm{J} \, \dd \Xi \, .
\end{align}
}
In order to compare our formulation, we also derive a nodal $\Hone$-finite element 
\begin{gather}
\bm{\zeta} = \bigcup^{n}_{e=1}\underbrace{ \begin{bmatrix}
	N_1 \bm{I} & N_2 \bm{I} & N_3 \bm{I} & N_4 \bm{I}
	\end{bmatrix} }_{\displaystyle \bm{N}} \underbrace{\begin{bmatrix}
	\zeta_{1} \\ \zeta_{2} \\ \vdots \\ \zeta_{8} \end{bmatrix} }_{\displaystyle \bar{\bm{\zeta}}_{e}} \, , \quad \bm{I} = \begin{bmatrix}
1 & 0 \\
0 & 1
\end{bmatrix} \, .
\label{eq:nod}
\end{gather}
In contrast to the hybrid element, the approach in \cref{eq:nod} requires 8 dofs per element for the microdistortion. Using \cref{eq:nod} we obtain the following stiffness matrices for the nodal element
\begin{subequations}
	\begin{align}
	\bm{K}_\textrm{e} &= 2\mue \int_{\Omega} \begin{bmatrix}
	(\nabla \bm{H})^T \nabla\bm{H} & -(\nabla \bm{H})^T \bm{N} \\
	- \bm{N}^T \nabla \bm{H} & \bm{N}^T \bm{N}
	\end{bmatrix} \det\bm{J} \,\dd \Omega \, , \\[2ex]
	\bm{K}_\textrm{micro} &= 2\mumi \int_{\Omega} \begin{bmatrix}
	\bm{O} & \bm{O}_\textrm{a}^T \\
	\bm{O}_\textrm{a} & \bm{N}^T \bm{N}
	\end{bmatrix} \det\bm{J} \,\dd \Omega  \, , \\[2ex]
	\bm{K}_\textrm{macro} &= 
	\muma \Lc^2  
	\int_{\Omega} \begin{bmatrix}
	\bm{O} & \bm{O}_\textrm{a}^T \\
	\bm{O}_\textrm{a} & (\curl\bm{N})^T \curl\bm{N}
	\end{bmatrix} \det\bm{J}\, \dd \Omega \, ,
	\end{align}
\end{subequations}
with $\bm{O}_\textrm{a} \in \{0\}^{8 \times 4}$. \AS{Consequently, $\bar{\bm{\omega}}_e$ changes to 
\begin{align}
	\bar{\bm{\omega}}_e = \int_{\Xi} \bm{N}^T \bm{\omega} \det \bm{J} \, \dd \Xi \, .
\end{align}}
In conclusion, we compare the hybrid element having 8 degrees of freedom in total with the nodal element having 12 degrees of freedom. The difference in the overall degrees of freedom results from the vectorial approach to the microdistortion in the hybrid element.

\section{Numerical examples}\label{ch:7}
In following examples we construct analytical solutions by imposing predefined displacement and microdistortion fields and calculating the resulting right-hand side. The predefined fields are the analytical solutions to the resulting right-hand side along with the derived Dirichlet boundary conditions (for a full derivation see \cref{ap:A}). 
\AS{Further, in all subsequent examples the domain and the flux field $\bm{\zeta}$ lie in the $x-y$ plane and the displacement $u$ is parallel to the $z$-axis. Correspondingly, for figures of $u$ we provide a three-dimensional perspective and figures of $\bm{\zeta}$ are aerial views of the $x-y$ plane. The examples have the mechanical interpretation of a membrane antiplane deformation.}

\subsection{Benchmark for an imposed vanishing microdistortion}\label{sec:imposed}
We impose the predefined fields
\begin{gather}
\widetilde{u}(x,y) = 4 - \dfrac{x^2}{8}  - \dfrac{y^2}{8} + x \, y \, , \qquad
\widetilde{\bm{\zeta}}(x,y) = 0 \, .
\label{eq:im}
\end{gather}
In order to constrain the numerical solution to that of our proposed fields in \cref{eq:im}, we set the following Dirichlet boundary conditions
\begin{gather}
u(x,y)\at_{\partial \Omega} = \widetilde{u}(x,y)\at_{\partial \Omega}  \, , \qquad  \langle \bm{\zeta}(x,y) , \, \bm{\tau} \rangle \at_{\partial \Omega} =  \langle \widetilde{\bm{\zeta}}(x,y) , \, \bm{\tau} \rangle \at_{\partial \Omega} \, .
\label{eq:totalDir1}
\end{gather}
In the following example we set for simplicity 
\begin{gather}
\mue = \mumi = \muma = \Lc = 1 \, ,
\end{gather}
and extract the resulting force and moment (the right-hand side)
\begin{gather}
f = 1 \, , \qquad  \bm{\omega} = \begin{bmatrix}
\dfrac{x}{2}  - 2y \\[2ex]
\dfrac{y}{2} - 2x
\end{bmatrix}  \, .
\end{gather}
Our simulations consider the domain $\Omega=[-4,4] \times [-4,4]$ with irregular meshes under h-refinement, as shown in \cref{fig:con}. Both element formulations converge towards the analytical solution, see \cref{fig:gr2}.
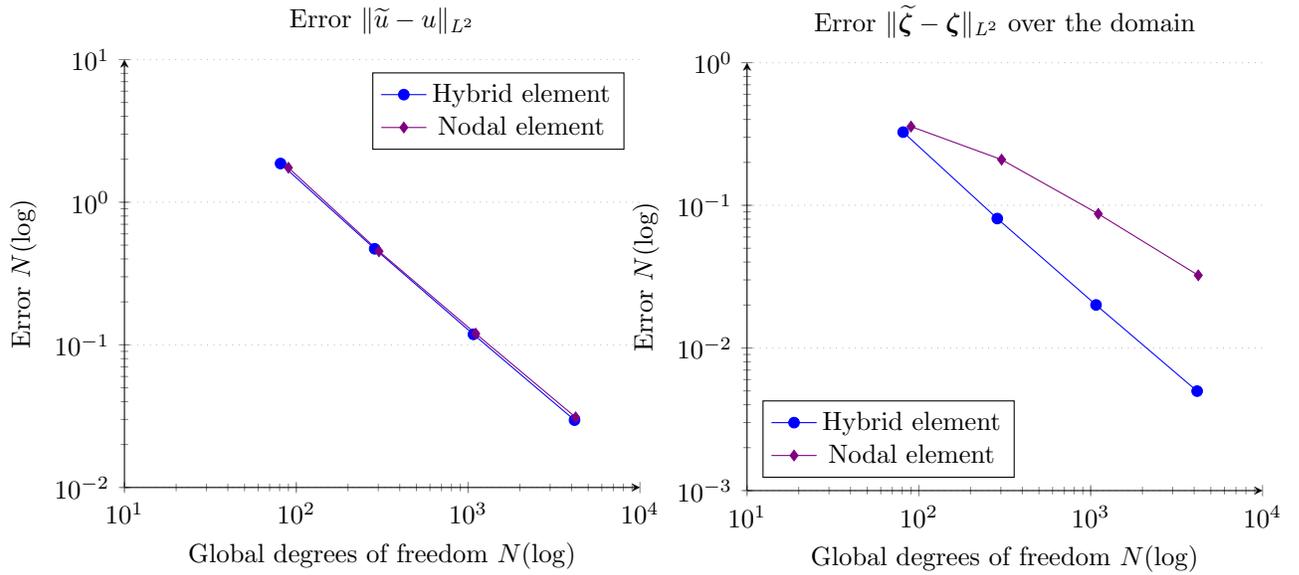
\begin{figure}
	\centering
	\begin{subfigure}{0.48\linewidth}
		\begin{tikzpicture}
		\begin{loglogaxis}[
		/pgf/number format/1000 sep={},
		title={Error $\| \widetilde{u} - u \|_{\Le}$},
		axis lines = left,
		xlabel={Global degrees of freedom $N(\log)$},
		ylabel={Error $N(\log)$},
		xmin=10, xmax=10000,
		ymin=0.01, ymax=10,
		xtick={10,100,1000,10000},
		ytick={0.01,0.1,1,10},
		legend pos=north east,
		ymajorgrids=true,
		grid style=dotted,
		]
		\addplot[
		color=blue,
		mark=*,
		]
		coordinates {
			(81,	1.866948112)
			(286,	0.47133458)
			(1074,	0.118415549)
			(4162,	0.029664259)		
		};
		\addlegendentry{Hybrid element}
		\addplot[
		color=violet,
		mark=diamond*,
		] 
		coordinates {
			(90,	1.746033228)
			(303,	0.45267357)
			(1107,	0.119845842)
			(4227,	0.031026686)
		};
		\addlegendentry{Nodal element}
		\end{loglogaxis}
		\end{tikzpicture}
	\end{subfigure}
	\begin{subfigure}{0.48\linewidth}
		\begin{tikzpicture}
		\begin{loglogaxis}[
		/pgf/number format/1000 sep={},
		title={Error $\| \widetilde{\bm{\zeta}} - \bm{\zeta} \|_{\Le}$ over the domain},
		axis lines = left,
		xlabel={Global degrees of freedom $N(\log)$},
		ylabel={Error $N(\log)$},
		xmin=10, xmax=10000,
		ymin=0.001, ymax=1,
		xtick={10,100,1000,10000},
		ytick={0.001,0.01,0.1,1},
		legend pos=south west,
		ymajorgrids=true,
		grid style=dotted,
		]
		\addplot[
		color=blue,
		mark=*,
		]
		coordinates {
			(81,0.325805523)
			(286,0.080759417)
			(1074,0.020024626)
			(4162,0.004982047)
		};
		\addlegendentry{Hybrid element}
		\addplot[
		color=violet,
		mark=diamond*,
		]
		coordinates {
			(90,0.356646941)
			(303,0.209137612)
			(1107,0.087128584)
			(4227,0.032303368)			
		};
		\addlegendentry{Nodal element}
		
		\end{loglogaxis}
		\end{tikzpicture}
	\end{subfigure}
    \caption{Convergence behaviour of element formulations under mesh refinement.}
    \label{fig:gr2}
\end{figure}
The microdistortion field $\bm{\zeta}$ displayed in \cref{fig:mic1} approaches zero with each refinement, satisfying the imposed field. We notice faster convergence in the hybrid element.
\begin{figure}
	\centering
	\begin{subfigure}{0.3\linewidth}
		\centering
		\includegraphics[width=1\linewidth]{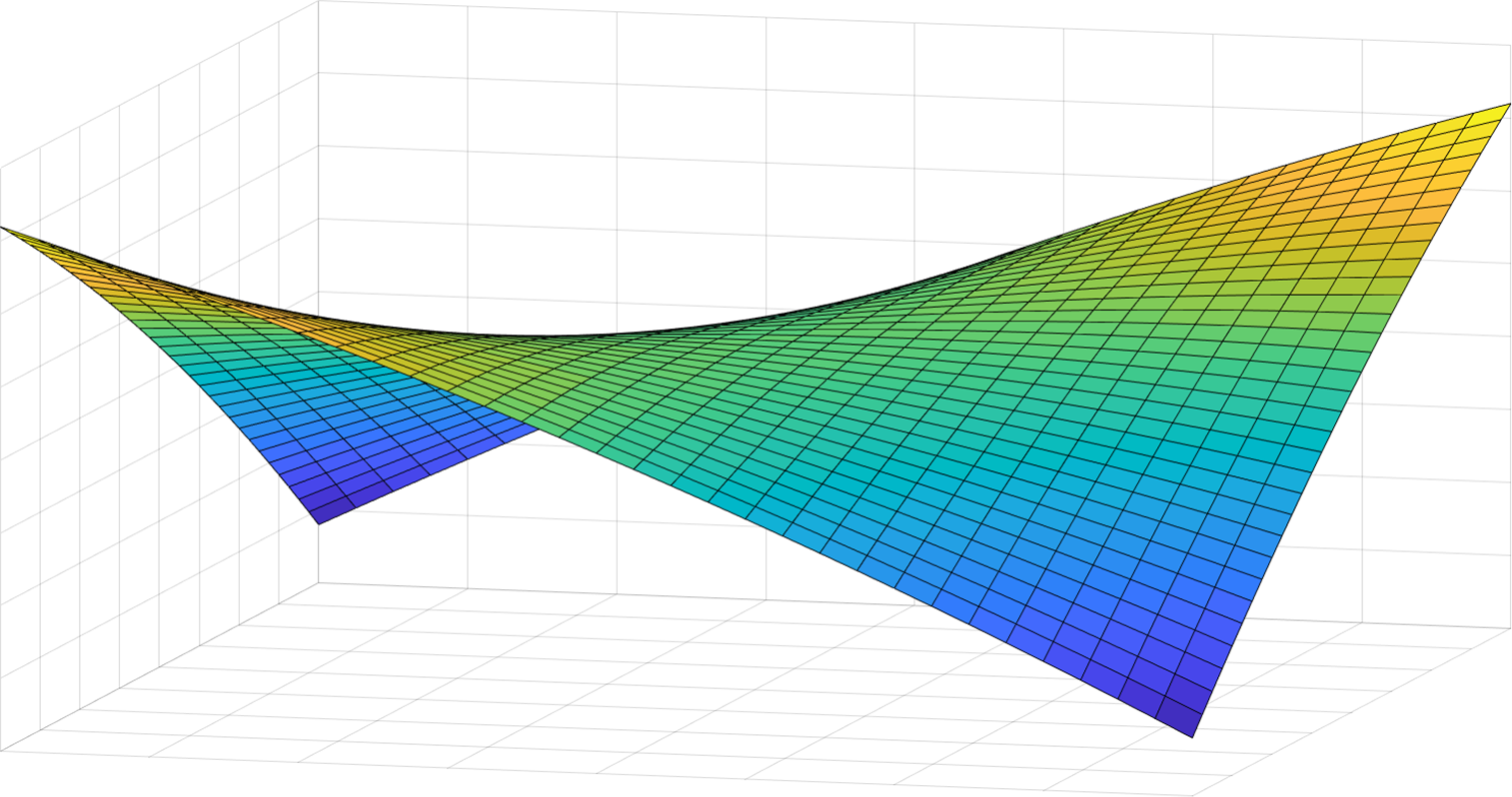}
		\caption{Analytical solution}
	\end{subfigure}
    \begin{subfigure}{0.3\linewidth}
    	\includegraphics[width=1\linewidth]{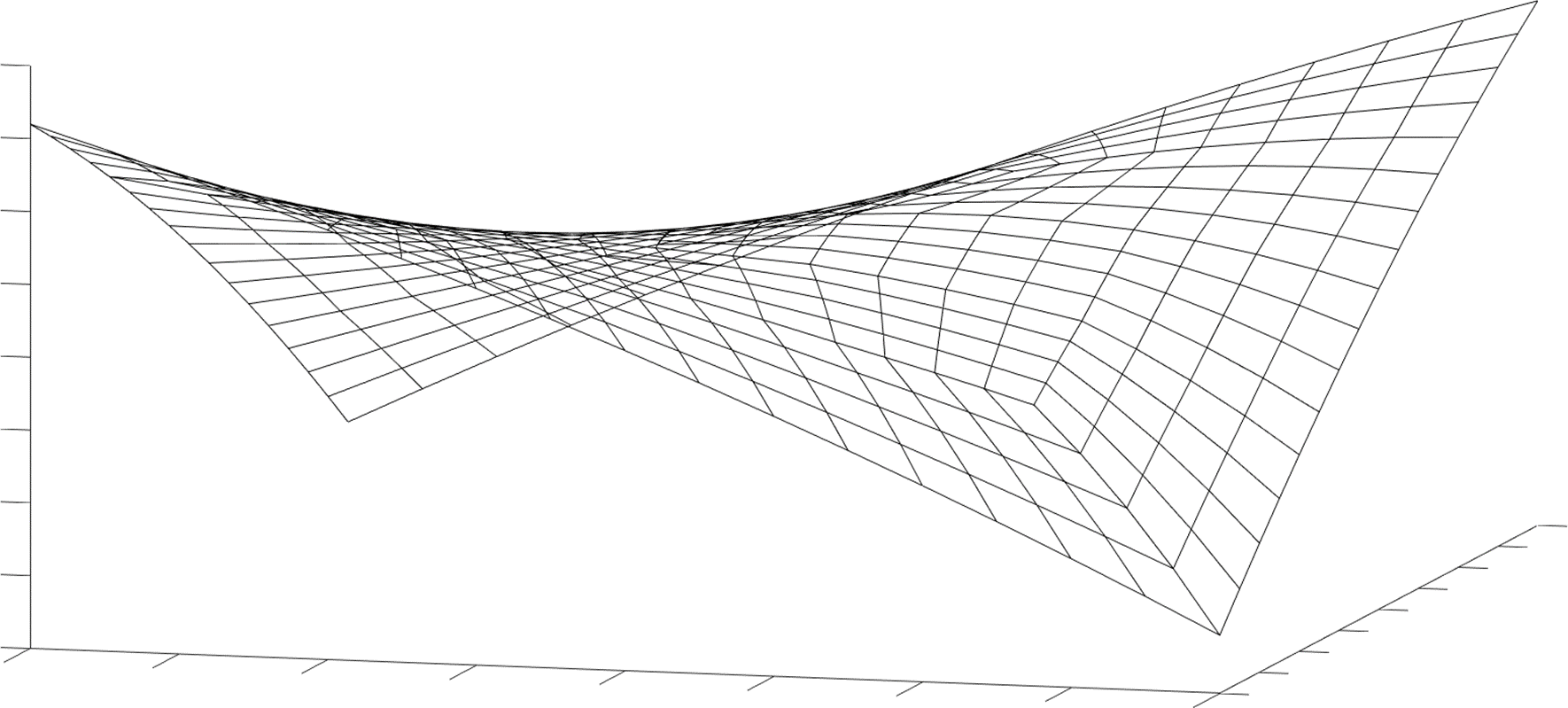}
    	\caption{336 elements}
    \end{subfigure}
	\begin{subfigure}{0.3\linewidth}
		\includegraphics[width=1\linewidth]{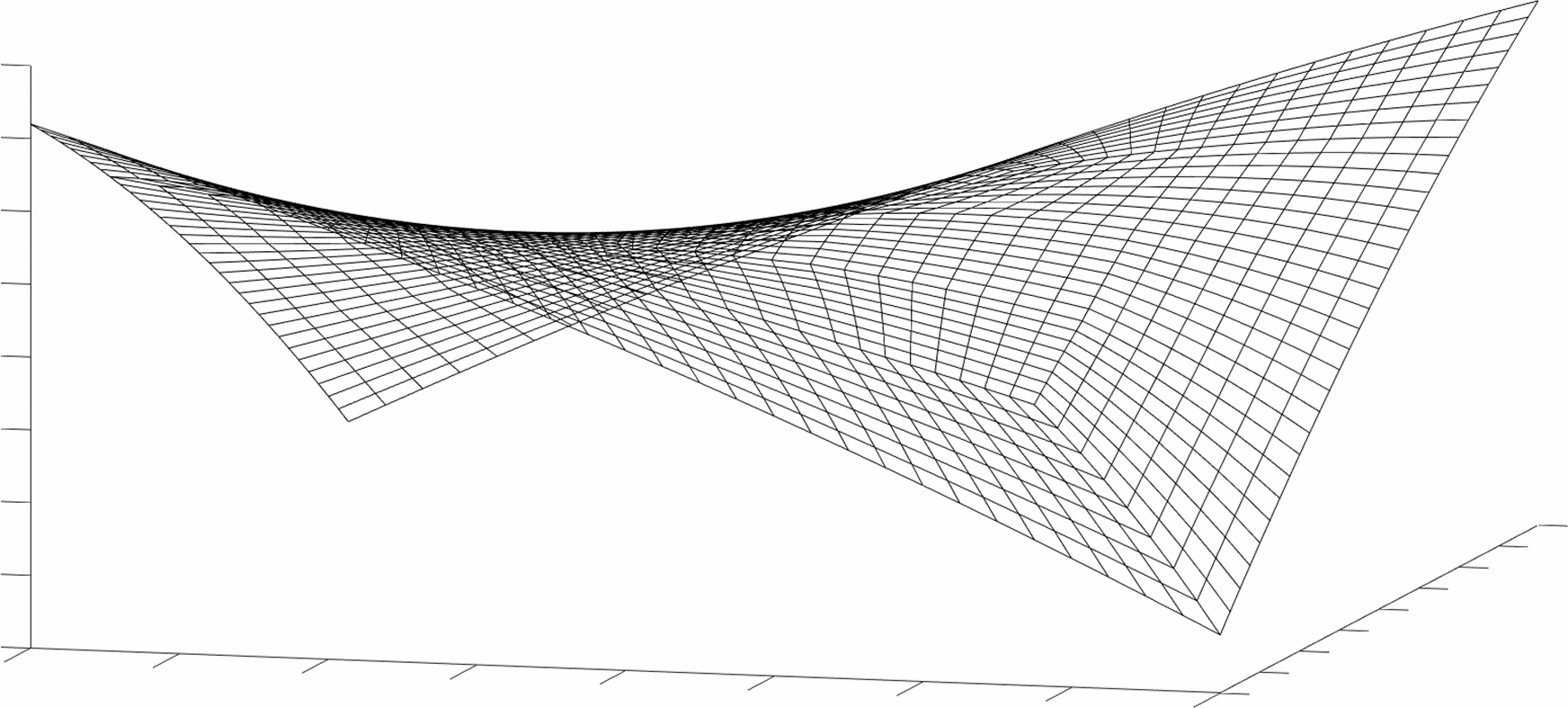}
		\caption{1344 elements}
	\end{subfigure}
	\caption{Displacement $u$ of the analytical and finite element solutions.}
	\label{fig:con}
\end{figure}

\begin{figure}
	\centering
	\begin{subfigure}{0.3\linewidth}
		\includegraphics[width=1\linewidth]{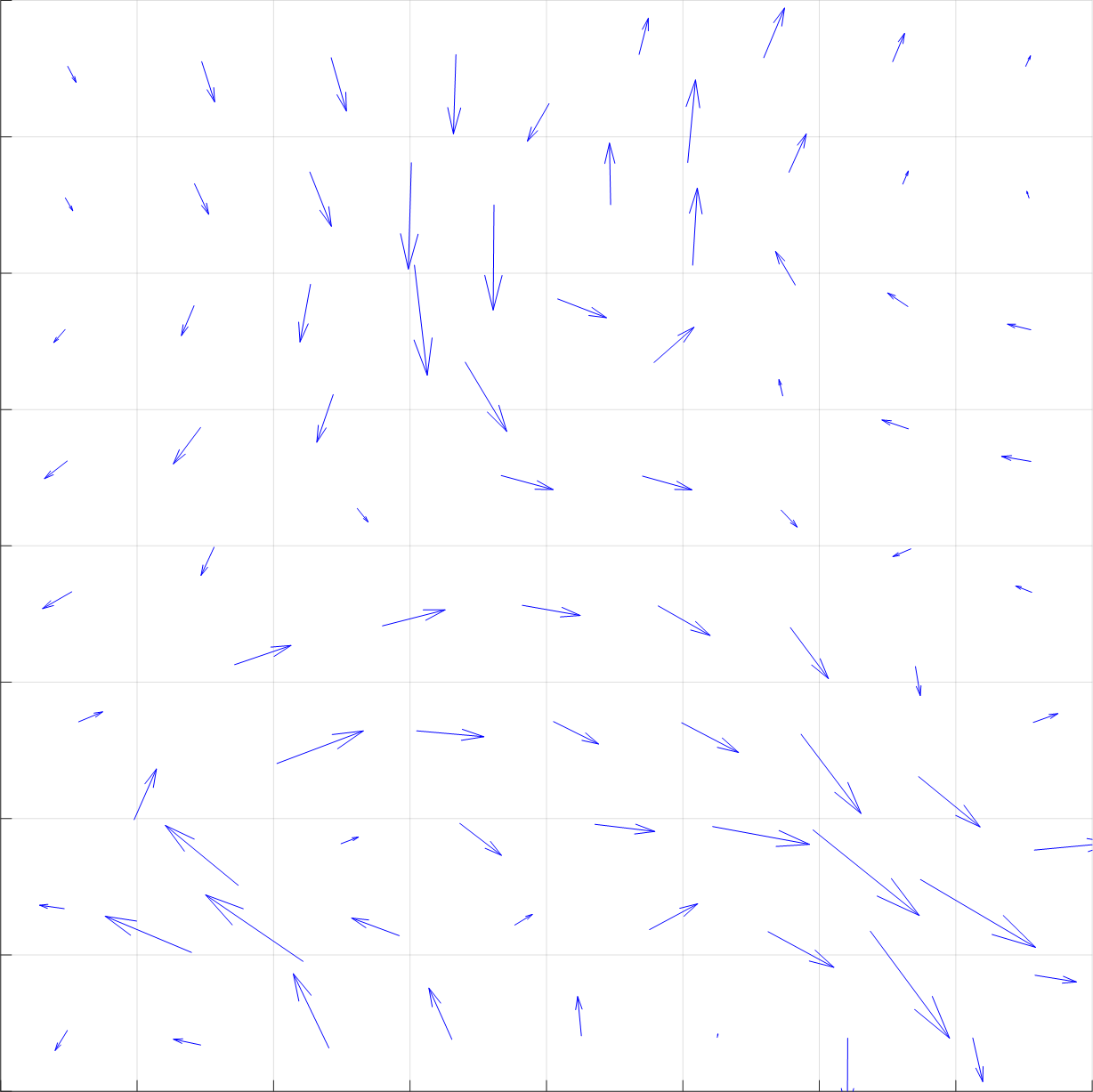}
		\caption{84 hybrid elements}
	\end{subfigure}
	\begin{subfigure}{0.3\linewidth}
		\includegraphics[width=1\linewidth]{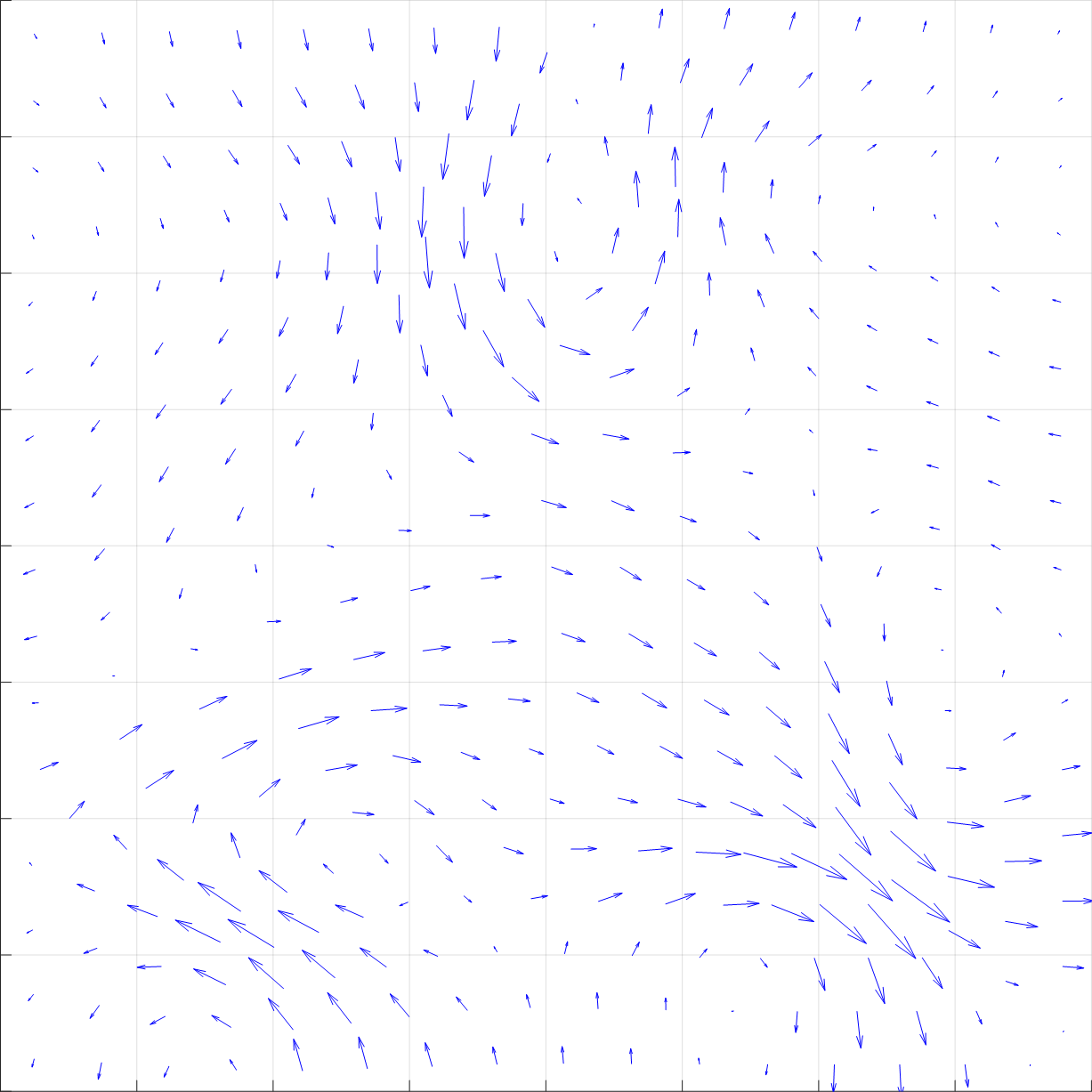}
		\caption{336 hybrid elements}
	\end{subfigure}
	\begin{subfigure}{0.3\linewidth}
		\includegraphics[width=1\linewidth]{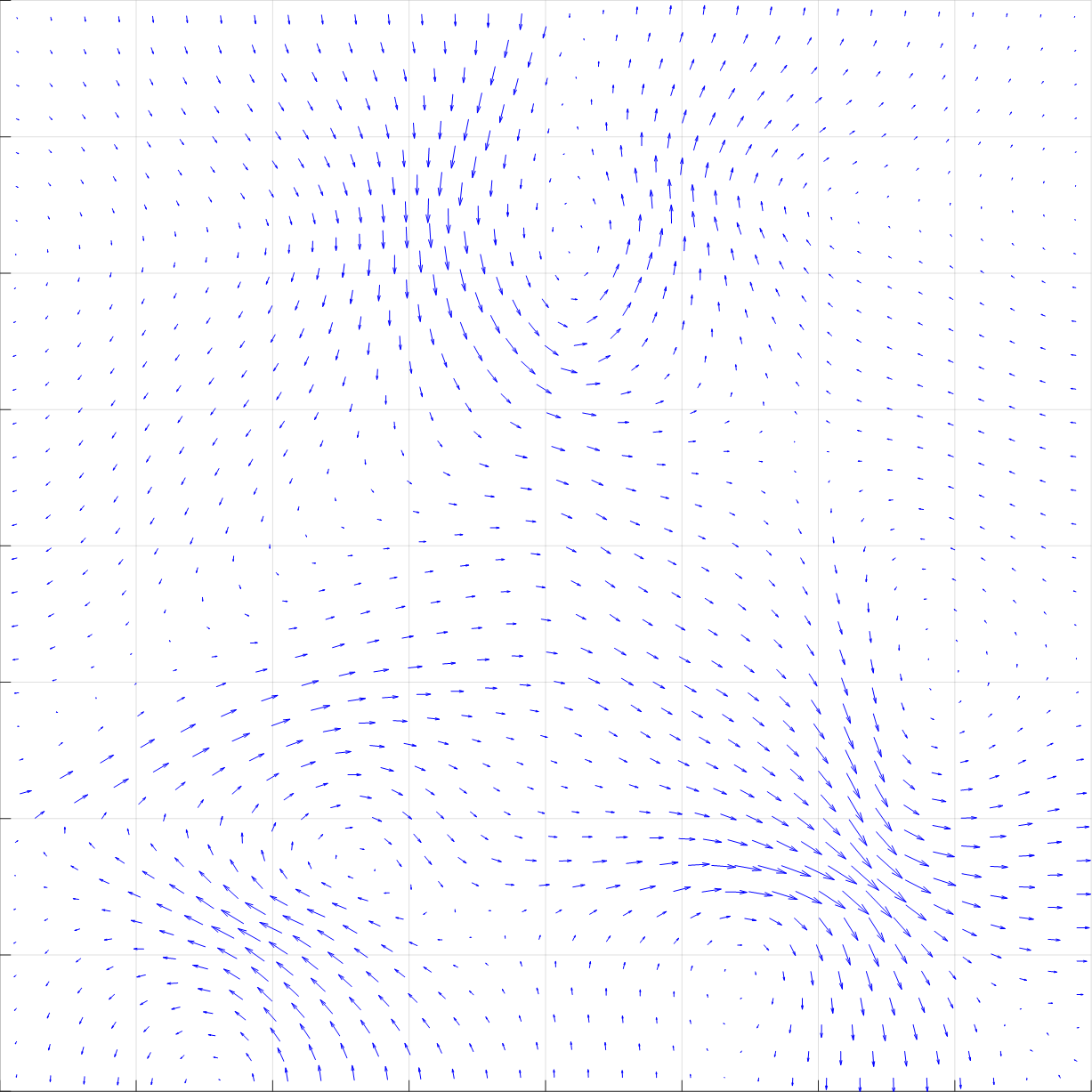}
		\caption{1344 hybrid elements}
	\end{subfigure}
	\begin{subfigure}{0.3\linewidth}
		\includegraphics[width=1\linewidth]{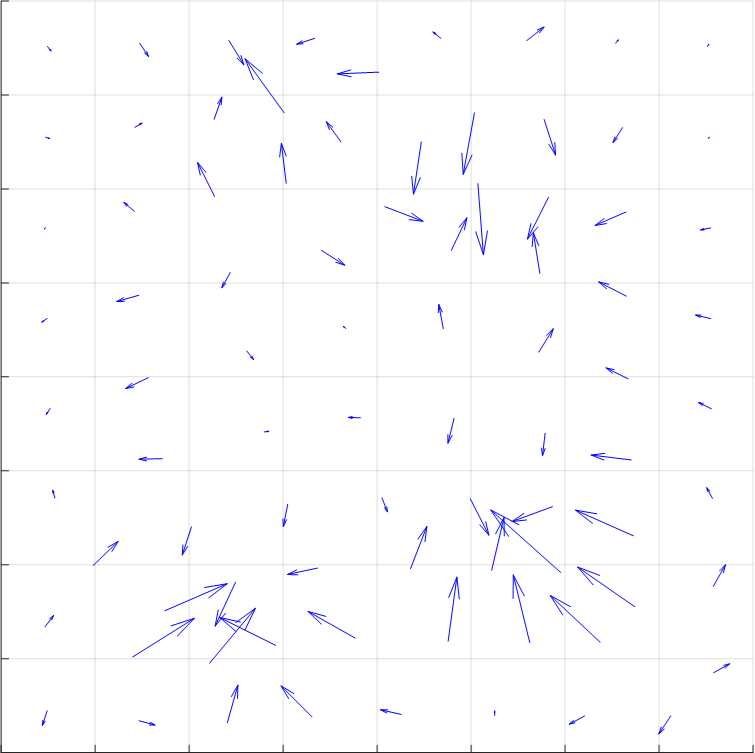}
		\caption{84 nodal elements}
	\end{subfigure}
	\begin{subfigure}{0.3\linewidth}
		\includegraphics[width=1\linewidth]{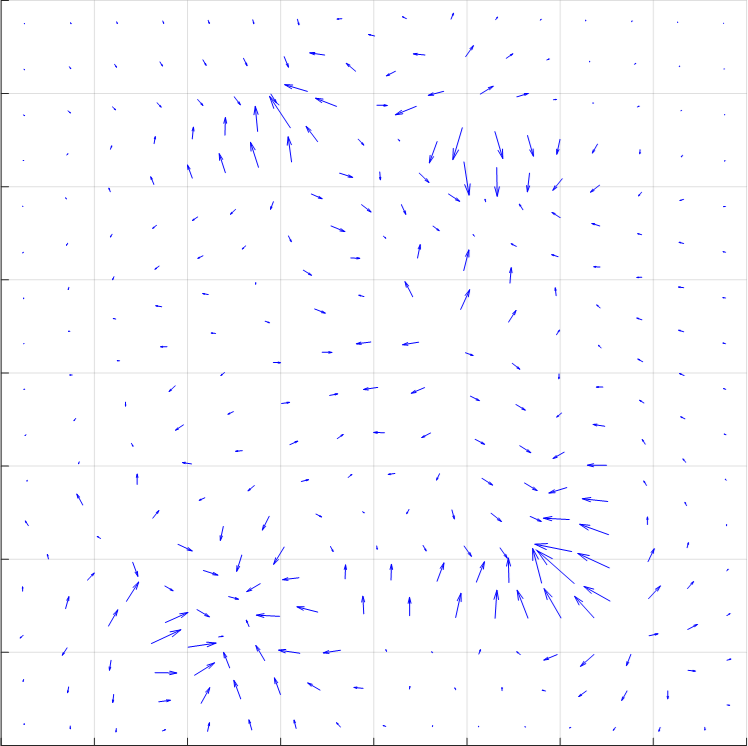}
		\caption{336 nodal elements}
	\end{subfigure}
	\begin{subfigure}{0.3\linewidth}
		\includegraphics[width=1\linewidth]{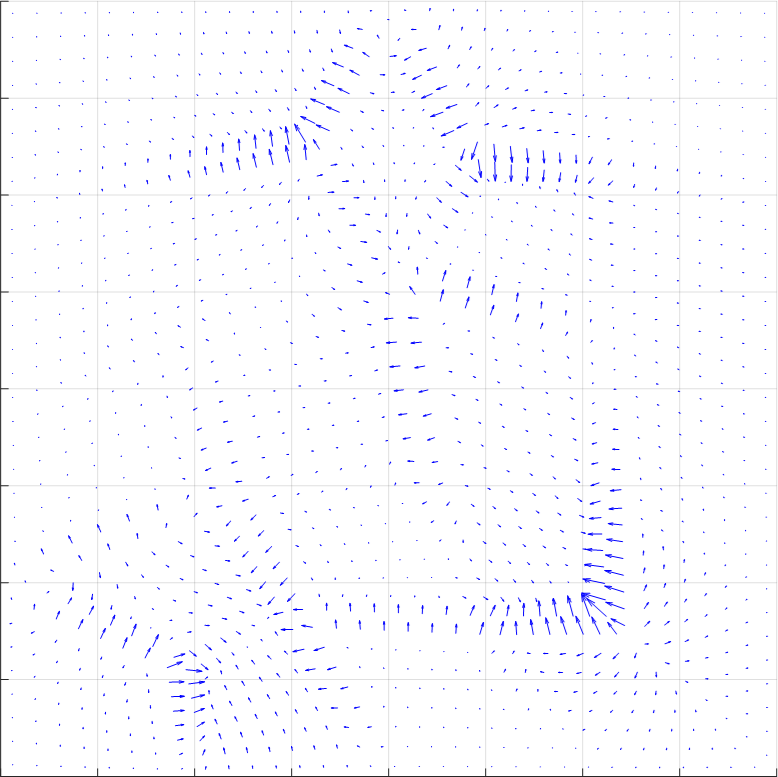}
		\caption{1344 nodal elements}
	\end{subfigure}
	\caption{Decay of the microdistortion $\bm{\zeta}$ according to \cref{eq:im} on irregular meshes undergoing refinement. The intensity of the microdistortion approaches zero with each refinement. This is seen here in a decrease of the flux vectors.}
	\label{fig:mic1}
\end{figure}

\subsection{Benchmark for a non-vanishing imposed microdistortion}
\label{subse:benchmark_non_van_microdist}
In the following step in our investigation we test our finite element formulations for a non-vanishing microdistortion field $\bm{\zeta}$, specifically a rotation field, as to determine the convergence behaviour of the nodal element with respect to the curl stiffness. 
We set $\Omega=[-4,4] \times [-4,4]$, $\mue=\muma=\mumi=\Lc=1$ and the fields 
\begin{gather}
\widetilde{u}(x,y) = xy \left (\dfrac{y^2}{16} - \dfrac{x^2}{16} \right ) - 1 \, , \qquad
\widetilde{\bm{\zeta}}(x,y) = \begin{bmatrix}
-y (\dfrac{x^2}{8}-2)(\dfrac{y^2}{8}-2) \\[2ex]
x (\dfrac{x^2}{8}-2)(\dfrac{y^2}{8}-2)
\end{bmatrix}
\label{eq:rand}
\end{gather}
with the corresponding Dirichlet boundary conditions 
\begin{gather}
u(x,y)\at_{\partial \Omega} = \widetilde{u}(x,y)\at_{\partial \Omega}  \, , \qquad \langle \bm{\zeta}(x,y) , \, \bm{\tau} \rangle \at_{\partial \Omega} =  \langle \widetilde{\bm{\zeta}}(x,y) , \, \bm{\tau} \rangle \at_{\partial \Omega} \, .
\label{eq:totalDir2} 
\end{gather}
The following force and moment are extracted, for details see \cref{ap:A},
\begin{gather}
f = -\dfrac{x \, y}{2} \left (\dfrac{y^2}{8} -\dfrac{x^2}{8}  \right ) \, , \\[2ex]  \bm{\omega} = 
\begin{bmatrix}
- (x^2 y^3)/16 + (25 x^2 y)/16 + (7y^3) /8 - 18y\\[2ex]
(x^3 y^2)/16 - (7 x^3)/8 - (25 x y^2)/16 + 18x
\end{bmatrix}  \, .
\end{gather}
\begin{figure}
	\centering
	\begin{subfigure}{0.3\linewidth}
		\centering
		\includegraphics[width=1\linewidth]{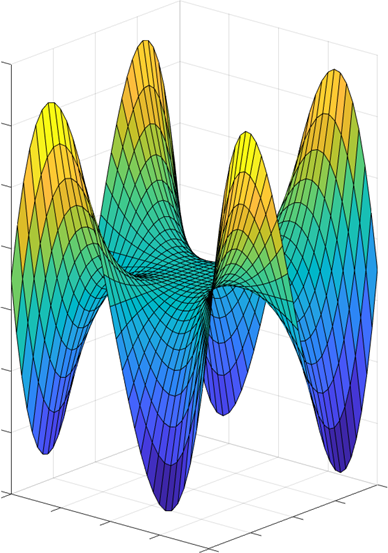}
		\caption{Analytical solution}
	\end{subfigure}
\begin{subfigure}{0.3\linewidth}
	\includegraphics[width=1\linewidth]{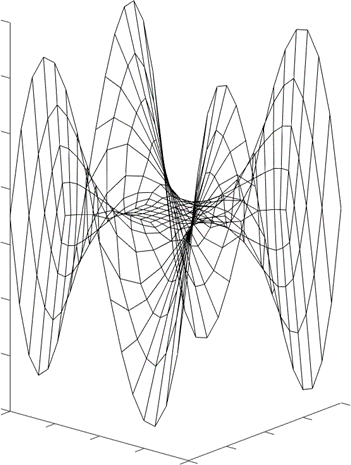}
	\caption{256 elements}
\end{subfigure}
	\begin{subfigure}{0.3\linewidth}
		\includegraphics[width=1\linewidth]{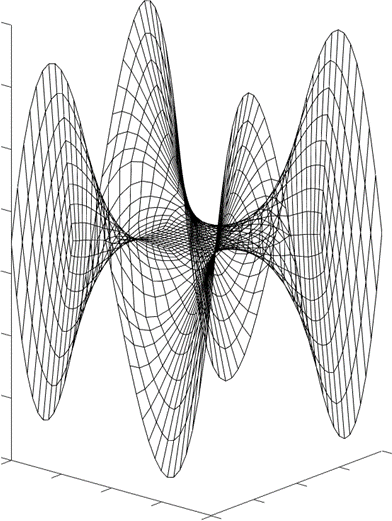}
		\caption{1024 elements}
	\end{subfigure}
	\caption{Displacement $u$ of the analytical and finite element solutions.}
	\label{fig:mesh}
\end{figure}
Consequently, the curl term is neither explicitly nor implicitly omitted.
We compare the displacement $u$ and the error $\|\widetilde{\bm{\zeta}} - \bm{\zeta}\|_{\Le}$ for both element formulations on an irregular mesh undergoing refinement, see \cref{fig:mesh,fig:gr3,fig:rot}.
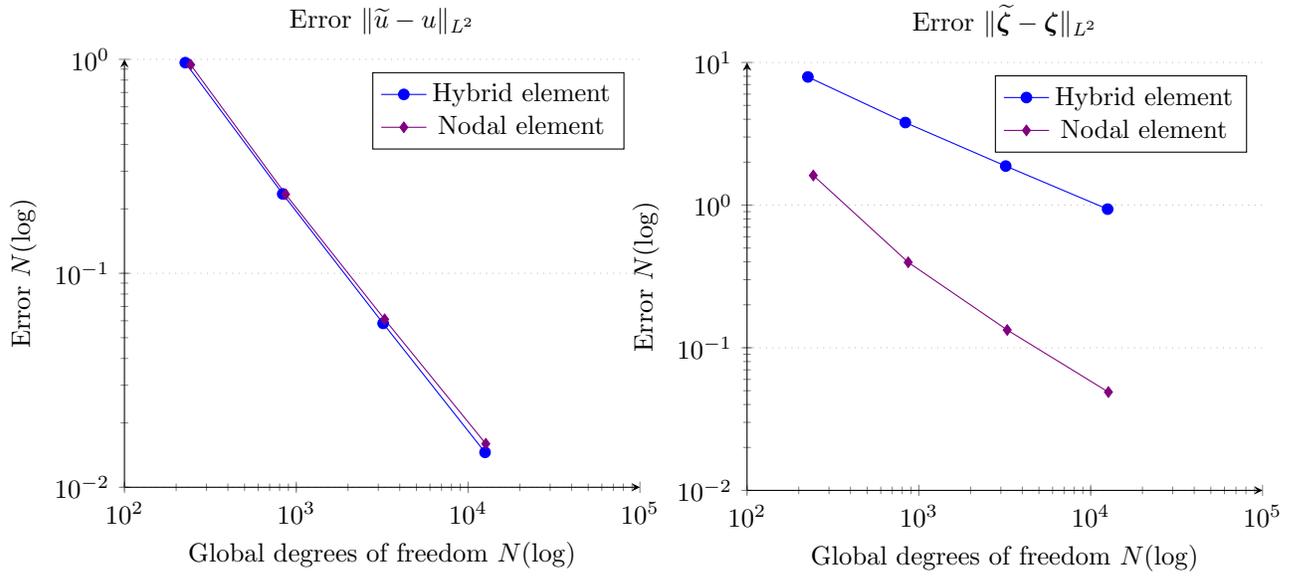
\begin{figure}
	\centering
	\begin{subfigure}{0.48\linewidth}
		\begin{tikzpicture}
		\begin{loglogaxis}[
		/pgf/number format/1000 sep={},
		title={Error $\| \widetilde{u} - u \|_{\Le}$},
		axis lines = left,
		xlabel={Global degrees of freedom $N(\log)$},
		ylabel={Error $N(\log)$},
		xmin=100, xmax=100000,
		ymin=0.01, ymax=1,
		xtick={100,1000,10000,100000},
		ytick={0.01,0.1,1},
		legend pos=north east,
		ymajorgrids=true,
		grid style=dotted,
		]
		\addplot[
		color=blue,
		mark=*,
		]
		coordinates {
			(226,0.964925632)
			(834,0.234951204)
			(3202,0.058336999)
			(12546,0.014559852)			
		};
		\addlegendentry{Hybrid element}
		\addplot[
		color=violet,
		mark=diamond*,
		] 
		coordinates {
			(243,0.943296378)
			(867,0.23395255)
			(3267,0.06094027)
			(12675,0.015970688)
		};
		\addlegendentry{Nodal element}
		\end{loglogaxis}
		\end{tikzpicture}
	\end{subfigure}
	\begin{subfigure}{0.48\linewidth}
		\begin{tikzpicture}
		\begin{loglogaxis}[
		/pgf/number format/1000 sep={},
		title={Error $\| \widetilde{\bm{\zeta}} - \bm{\zeta} \|_{\Le}$},
		axis lines = left,
		xlabel={Global degrees of freedom $N(\log)$},
		ylabel={Error $N(\log)$},
		xmin=100, xmax=100000,
		ymin=0.01, ymax=10,
		xtick={100,1000,10000,100000},
		ytick={0.01,0.1,1,10},
		legend pos=north east,
		ymajorgrids=true,
		grid style=dotted,
		]
		\addplot[
		color=blue,
		mark=*,
		]
		coordinates {
			(226,7.914976472)
			(834,3.791665503)
			(3202,1.878219253)
			(12546,0.937051989)
		};
		\addlegendentry{Hybrid element}
		\addplot[
		color=violet,
		mark=diamond*,
		] 
		coordinates {
			(243,1.613860924)
			(867,0.397180142)
			(3267,0.133204155)
			(12675,0.048883021)
		};
		\addlegendentry{Nodal element}
		\end{loglogaxis}
		\end{tikzpicture}
	\end{subfigure}
    \caption{Convergence behaviour of element formulations under mesh refinement.}
    \label{fig:gr3}
\end{figure}

As shown in \cref{fig:gr3}, both elements converge towards the analytical solution. However, we notice differences in the convergence rates, namely the nodal element converges faster in $\bm{\zeta}$.
\begin{figure}
	\centering
	\begin{subfigure}{0.3\linewidth}
		\centering
		\includegraphics[width=1\linewidth]{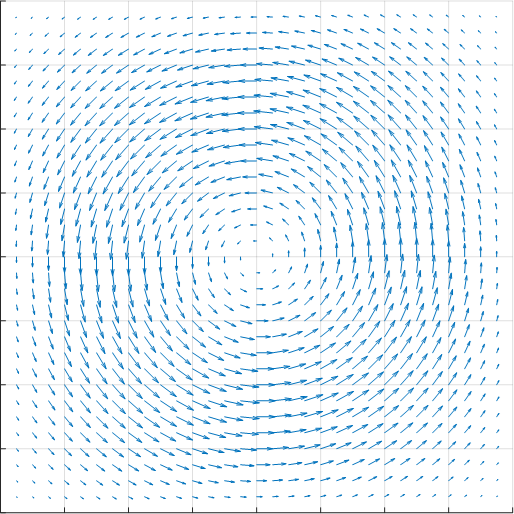}
		\caption{Analytical flux solution}
	\end{subfigure}
	\begin{subfigure}{0.3\linewidth}
		\centering
		\includegraphics[width=1\linewidth]{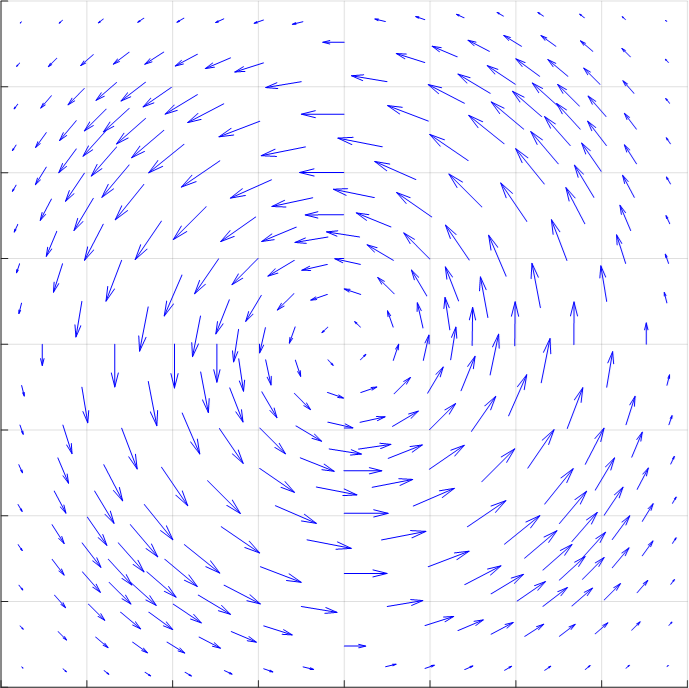}
		\caption{256 elements}
	\end{subfigure}
    \begin{subfigure}{0.3\linewidth}
    	\centering
    	\includegraphics[width=1\linewidth]{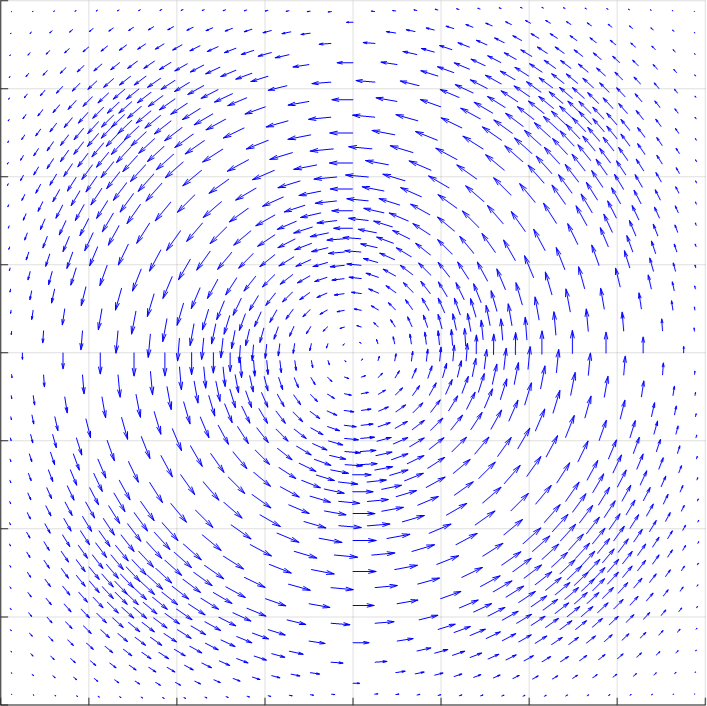}
    	\caption{1024 elements}
    \end{subfigure}
	\caption{Microdistortion $\bm{\zeta}$ of the analytical and finite element solutions on unstructured grids according to \cref{eq:rand}.}
	\label{fig:rot}
\end{figure}

\subsection{Solutions in $\Hc)$}
\label{subsec:solution_hcurl}
As $\Hc)$ is a larger space than $[\Hone]^2$, we have the relation $[\Hone]^2 \subset \Hc)$. Consequently, we can envision solutions belonging to $\Hc)$ and not $[\Hone]^2$. Such solutions fulfill the continuity of tangential components along element edges of $\Hc)$, but not the continuity of the normal component. Elements living in $[\Hone]^2$ require the continuity of both components. 

In the domain $\Omega = [-4,4] \times [-4,4]$ with $\Gamma_D^u=\partial\Omega$ and $\Gamma_D^\zeta=\emptyset$ we set $\mue=\muma=\mumi=\Lc=1$, the boundary conditions and external forces
\begin{align}
	u(-4,y) = u(4,y) = 0 \, , \qquad u(-2,y) = u(2,y) = -2 \, , \qquad u(0,y) = 2 \, , \qquad f = 0 \, , \qquad \vb{\omega} = 0 \, ,
\end{align}
for which the analytical solution reads
 \begin{align}
	\widetilde{u}(x,y) = \left \{ \begin{aligned}
	&-4-x  &\text{for}&  &-4 \leq &x \leq -2 \\
	&2+2x  &\text{for}&  &-2 < &x \leq 0 \\
	&2-2x  &\text{for}&  &0 < &x \leq 2 \\
	&x-4  &\text{for}&  &2 < &x \leq 4  
	\end{aligned}  \right . \, , \quad
	\widetilde{\bm{\zeta}} = \dfrac{\nabla \widetilde{u}}{2} = \left \{ \begin{aligned}
	&\begin{bmatrix}
	-0.5 & 0
	\end{bmatrix}^T & \text{for} && -4 \leq &x \leq -2 \\
	&\begin{bmatrix}
	1 & 0
	\end{bmatrix}^T & \text{for} && -2 < &x \leq 0 \\
	&\begin{bmatrix}
	-1 & 0
	\end{bmatrix}^T & \text{for} && 0 < &x \leq 2 \\
	&\begin{bmatrix}
	0.5 & 0
	\end{bmatrix}^T & \text{for} && 2 < &x \leq 4  
	\end{aligned}  \right . \, ,
	\label{eq:tent} 
\end{align}
where $\widetilde{\bm{\zeta}}$ follows from \cref{eq:strong1,eq:strong2}.
Note that the boundary data of $\widetilde{\bm{\zeta}}$ jumps and is therefore not in $\mathit{H}^{\nicefrac{1}{2}}(\Gamma_D)$. Consequently, the problem cannot be posed with $\bm{\zeta} \in [\Hone(\Omega)]^2$ if we set $\Gamma_D^\zeta=\partial\Omega$, see \cref{rem:discussion_h1_hcurl}. For $\bm{\zeta} \in \Hc,\Omega)$ the problem could be posed as $\langle \widetilde{\bm{\zeta}} , \, \bm{\tau} \rangle \in \Le(\partial\Omega) \subset \mathit{H}^{\nicefrac{-1}{2}}(\partial\Omega)$. 

We test both elements on an irregular mesh undergoing refinement \cref{fig:tent}.
\begin{figure}
	\centering
	\begin{subfigure}{0.3\linewidth}
		\centering
		\includegraphics[width=1\linewidth]{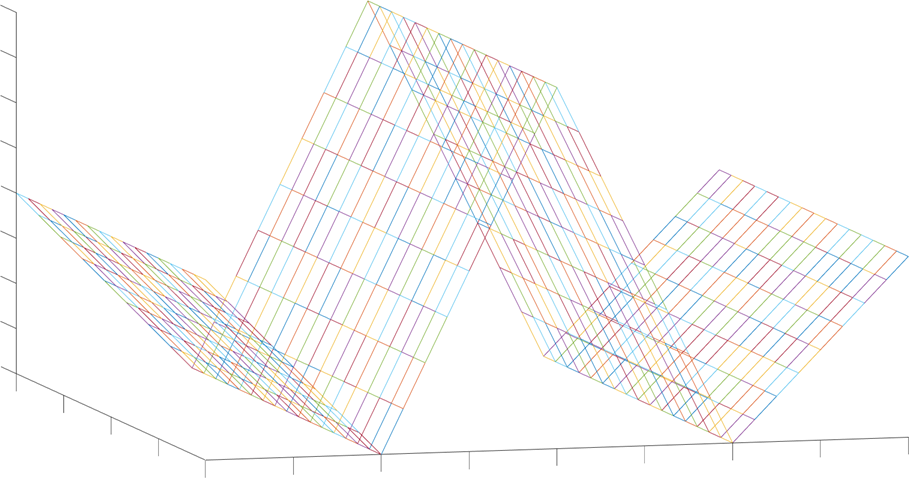}
		\caption{Analytical displacement solution}
	\end{subfigure}
	\begin{subfigure}{0.3\linewidth}
		\centering
		\includegraphics[width=1\linewidth]{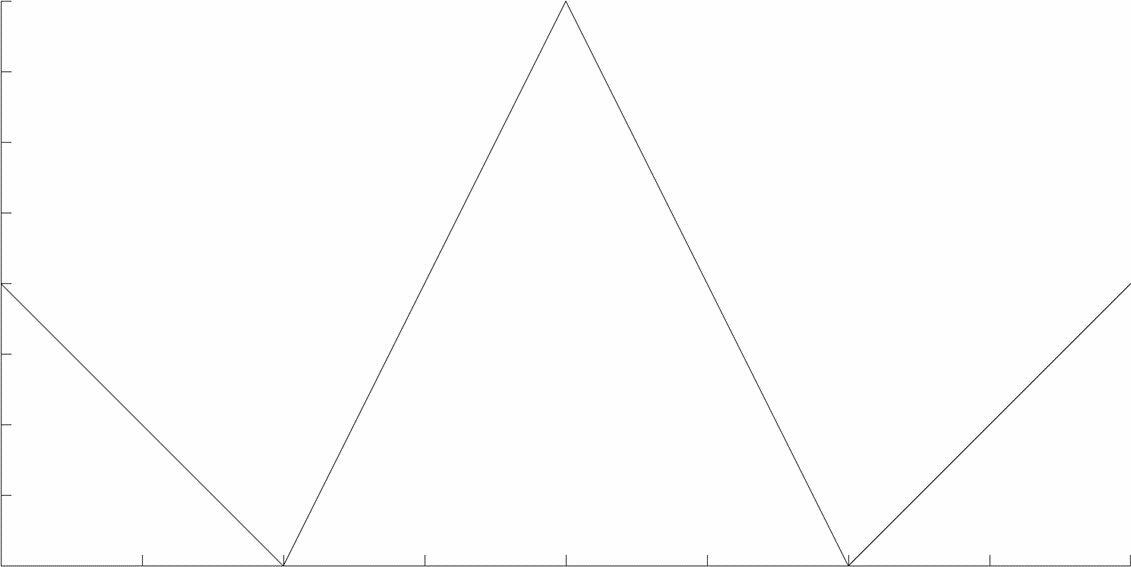}
		\caption{Solution with 46 hybrid elements}
	\end{subfigure}
	\begin{subfigure}{0.3\linewidth}
		\centering
		\includegraphics[width=1\linewidth]{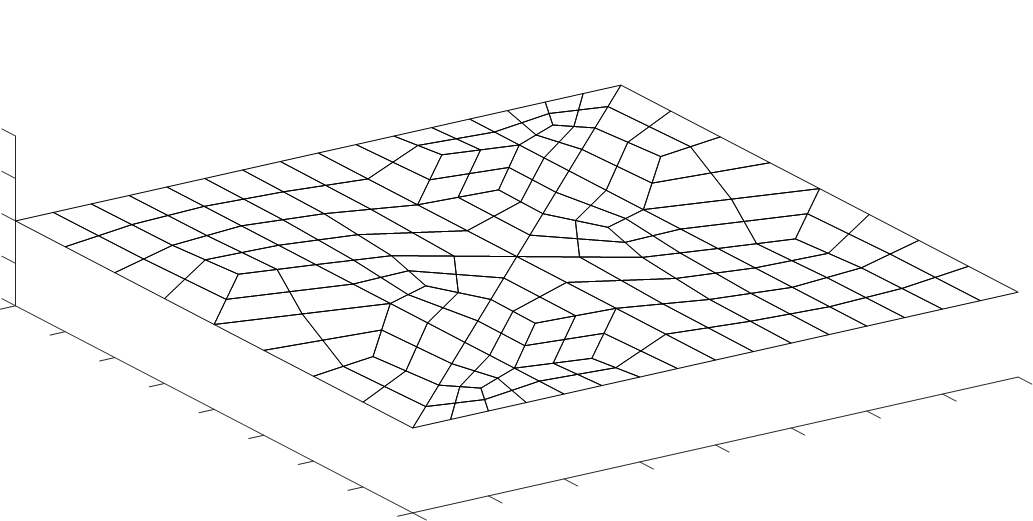}
		\caption{Example mesh with 184 elements}
	\end{subfigure}

\begin{subfigure}{0.3\linewidth}
	\centering
	\includegraphics[width=1\linewidth]{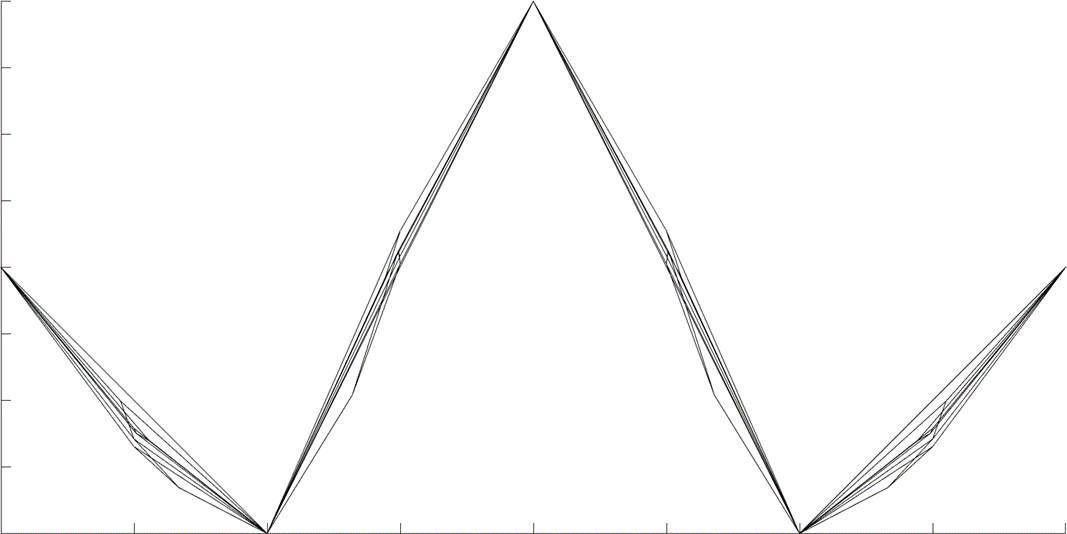}
	\caption{Solution with 46 nodal elements}
\end{subfigure}
\begin{subfigure}{0.3\linewidth}
	\centering
	\includegraphics[width=1\linewidth]{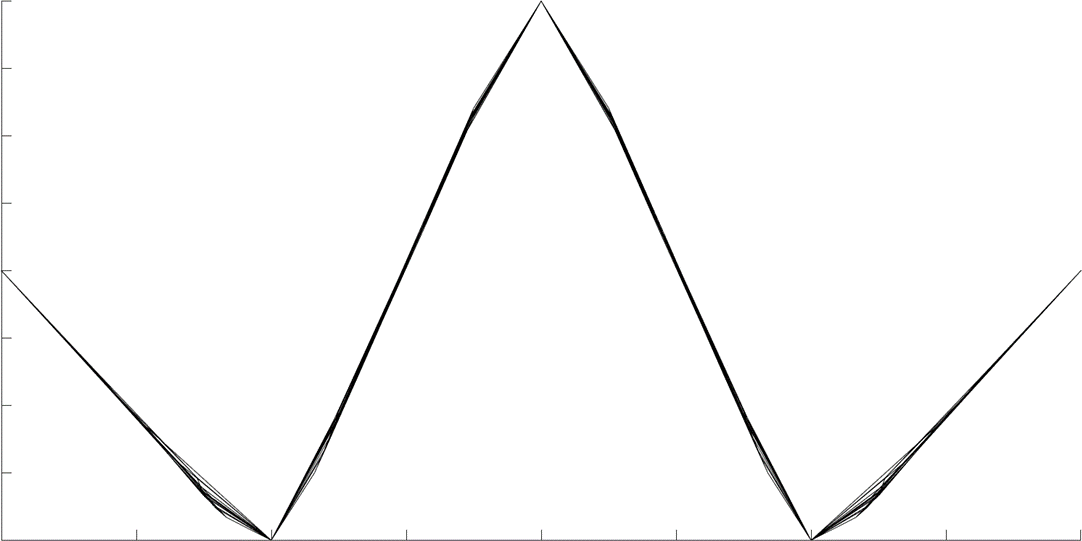}
	\caption{Solution with 184 nodal elements}
\end{subfigure}
\begin{subfigure}{0.3\linewidth}
	\centering
	\includegraphics[width=1\linewidth]{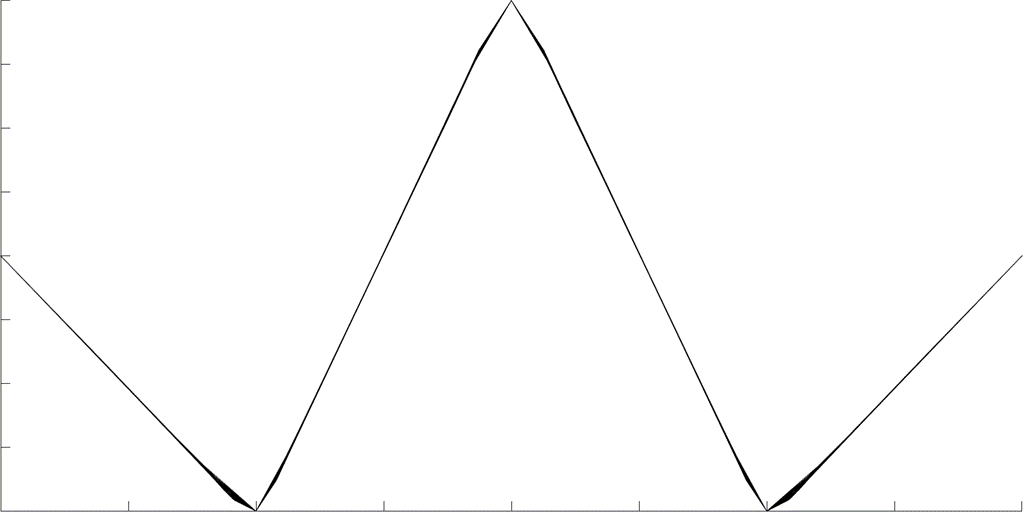}
	\caption{Solution with 736 nodal elements}
\end{subfigure}

	\caption{Analytical solution and finite element front view (\AS{$x-z$ plane}) for solutions of \cref{eq:tent}.}
	\label{fig:tent}
\end{figure}
We note the hybrid element finds the exact solution immediately with a coarse mesh, whereas the nodal element requires a much higher level of refinement in order to deliver a viable approximation. The nodal element localizes the error due to the discontinuity further with each refinement as seen in \cref{fig:tent_flux}. 
\begin{figure}
	\centering
	\begin{subfigure}{0.3\linewidth}
		\centering
		\includegraphics[width=1\linewidth]{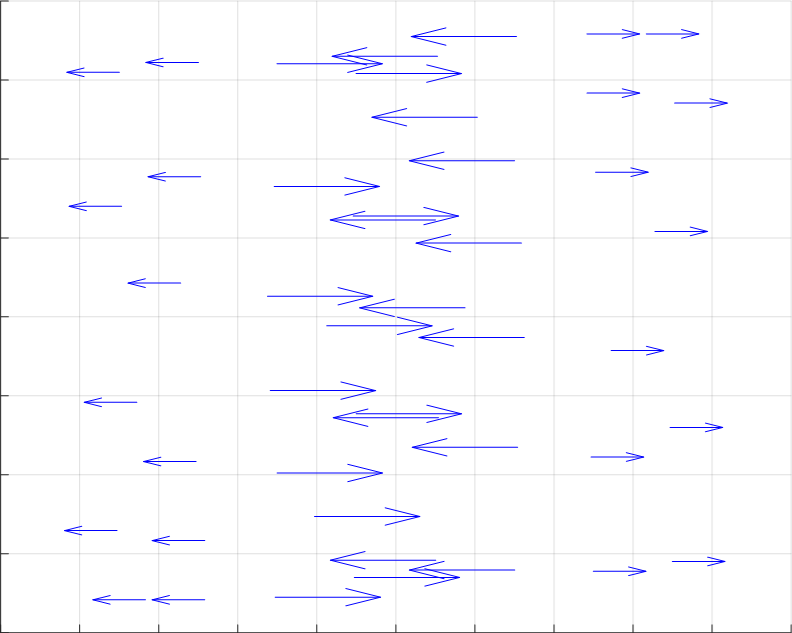}
		\caption{Microdistortion with 46 hybrid elements}
	\end{subfigure}
	\begin{subfigure}{0.3\linewidth}
		\centering
		\includegraphics[width=1\linewidth]{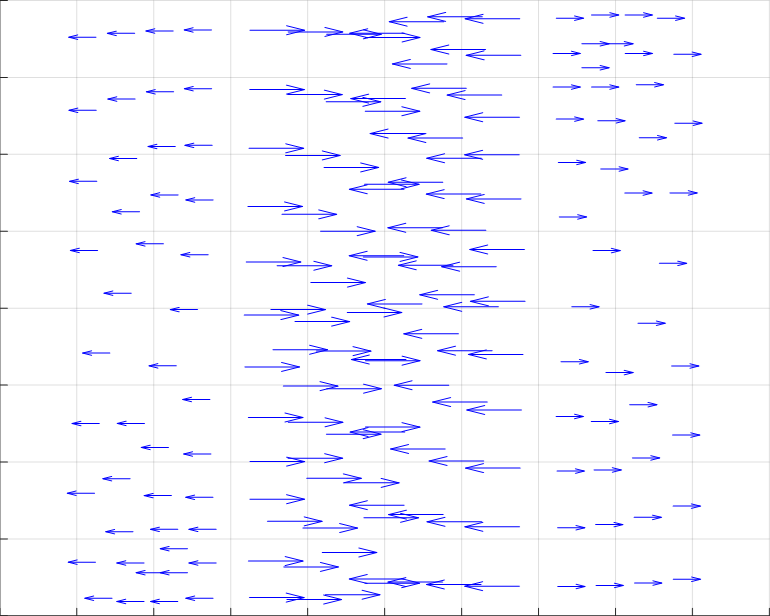}
		\caption{Microdistortion with 184 hybrid elements}
	\end{subfigure}
	\begin{subfigure}{0.3\linewidth}
		\centering
		\includegraphics[width=1\linewidth]{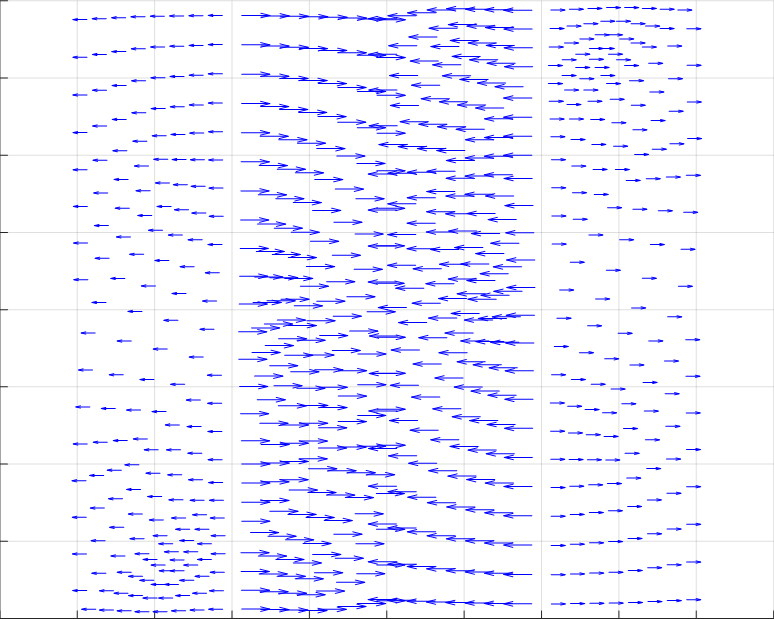}
		\caption{Microdistortion with 736 hybrid elements}
	\end{subfigure}
	
	\begin{subfigure}{0.3\linewidth}
		\centering
		\includegraphics[width=1\linewidth]{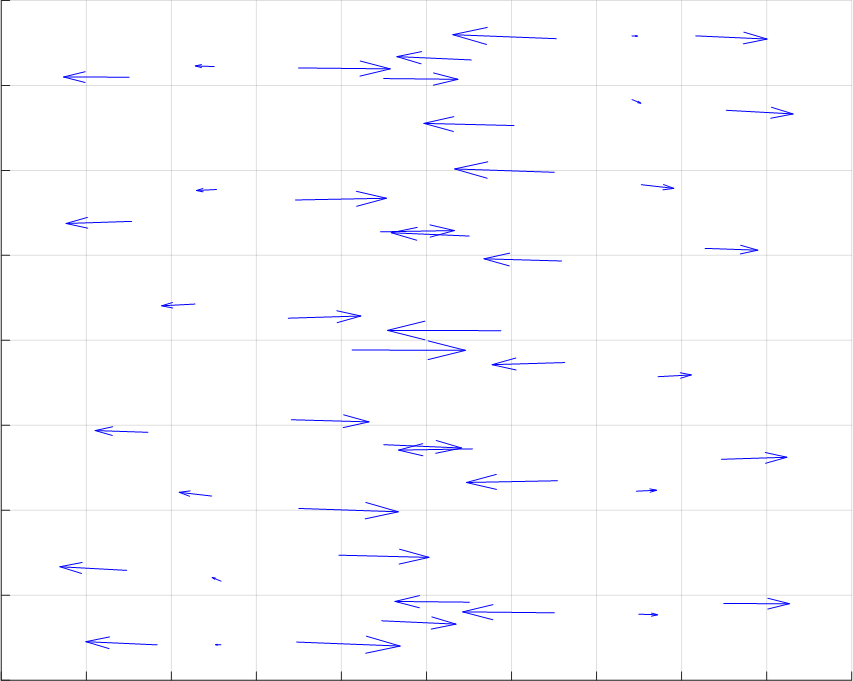}
		\caption{Microdistortion with 46 nodal elements}
	\end{subfigure}
	\begin{subfigure}{0.3\linewidth}
		\centering
		\includegraphics[width=1\linewidth]{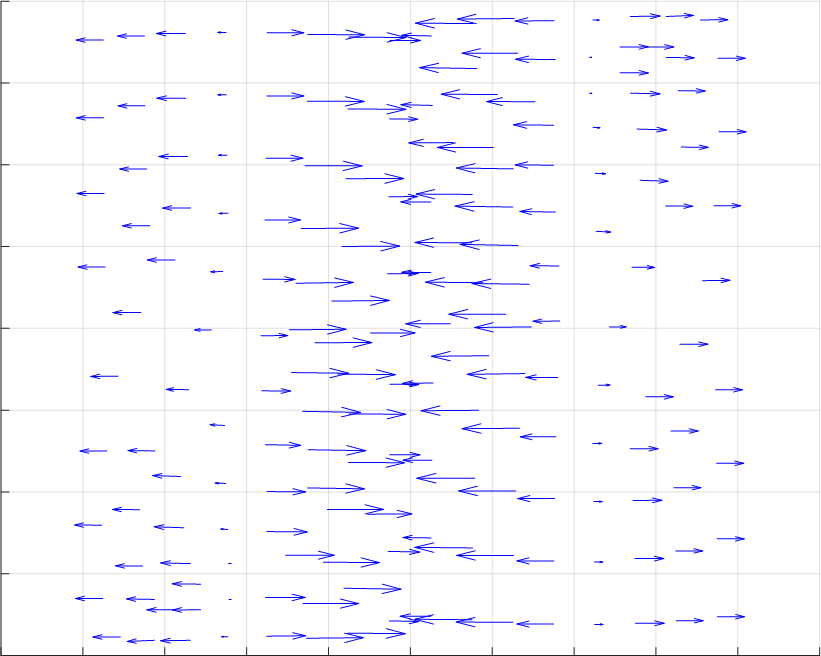}
		\caption{Microdistortion with 184 nodal elements}
	\end{subfigure}
	\begin{subfigure}{0.3\linewidth}
		\centering
		\includegraphics[width=1\linewidth]{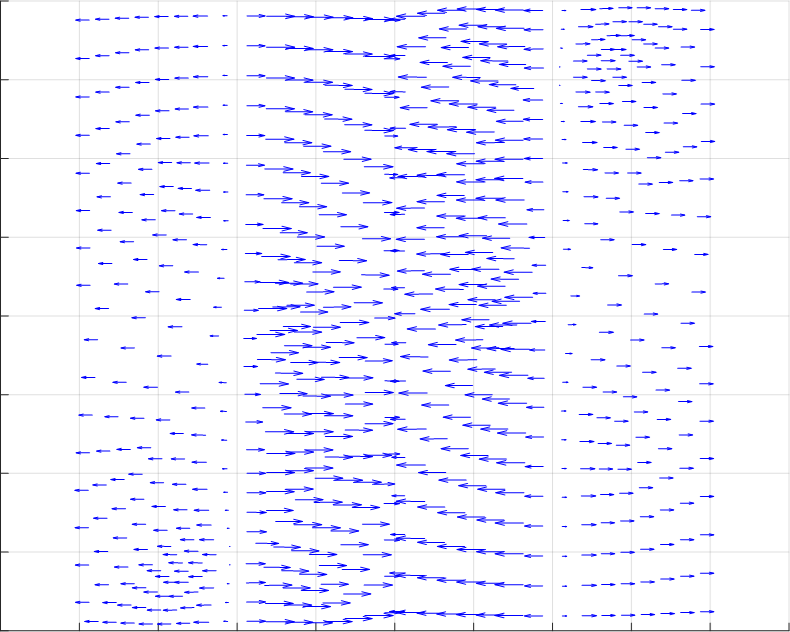}
		\caption{Microdistortion with 736 nodal elements}
	\end{subfigure}
	
	\caption{Finite element solutions of the microdistortion for \cref{eq:tent} for both formulations.}
	\label{fig:tent_flux}
\end{figure}
\begin{figure}
	\centering
	\begin{subfigure}{0.48\linewidth}
		\begin{tikzpicture}
		\begin{loglogaxis}[
		/pgf/number format/1000 sep={},
		title={Error $\| \widetilde{u} - u \|_{\Le}$ over the domain},
		axis lines = left,
		xlabel={Global degrees of freedom $N(\log)$},
		ylabel={Error $N(\log)$},
		xmin=100, xmax=10000,
		ymin=0.1, ymax=1,
		xtick={100,1000,10000},
		ytick={0.1,1},
		legend pos=north east,
		ymajorgrids=true,
		grid style=dotted,
		]
		\addplot[
		color=violet,
		mark=diamond*,
		]
		coordinates {
			(177,0.778234094)
			(627,0.617634834)
			(2355,0.367635487)
			(9123,0.20018801)
		};
		\addlegendentry{Nodal element}
		\end{loglogaxis}
		\end{tikzpicture}
	\end{subfigure}
	\begin{subfigure}{0.48\linewidth}
		\begin{tikzpicture}
		\begin{loglogaxis}[
		/pgf/number format/1000 sep={},
		title={Error $\| \widetilde{\bm{\zeta}} - \bm{\zeta} \|_{\Le}$ over the domain},
		axis lines = left,
		xlabel={Global degrees of freedom $N(\log)$},
		ylabel={Error $N(\log)$},
		xmin=100, xmax=10000,
		ymin=1, ymax=10,
		xtick={100,1000,10000},
		ytick={1,10},
		legend pos=north east,
		ymajorgrids=true,
		grid style=dotted,
		]
		\addplot[
		color=violet,
		mark=diamond*,
		]
		coordinates {
			(177,3.013678686)
			(627,2.274377726)
			(2355,1.628072962)
			(9123,1.160084183)
		};
		\addlegendentry{Nodal element}
		
		\addplot[
		dashed,
		color=black,
		mark=none,
		]
		coordinates {
			(177,0.6*6.854040906660596)
			(627,0.6*4.996007980849785)
			(2355,0.6*3.588742358850372)
			(9123,0.6*2.558029743638137)
		};
		
		\end{loglogaxis}
		\draw (3.5,3) node[anchor=north west] {$\mathcal{O}(h^{\nicefrac{1}{2}})$};
		\end{tikzpicture}
	\end{subfigure}
	\caption{Convergence behaviour of element formulations under mesh refinement.}
	\label{fig:tent_con}
\end{figure}
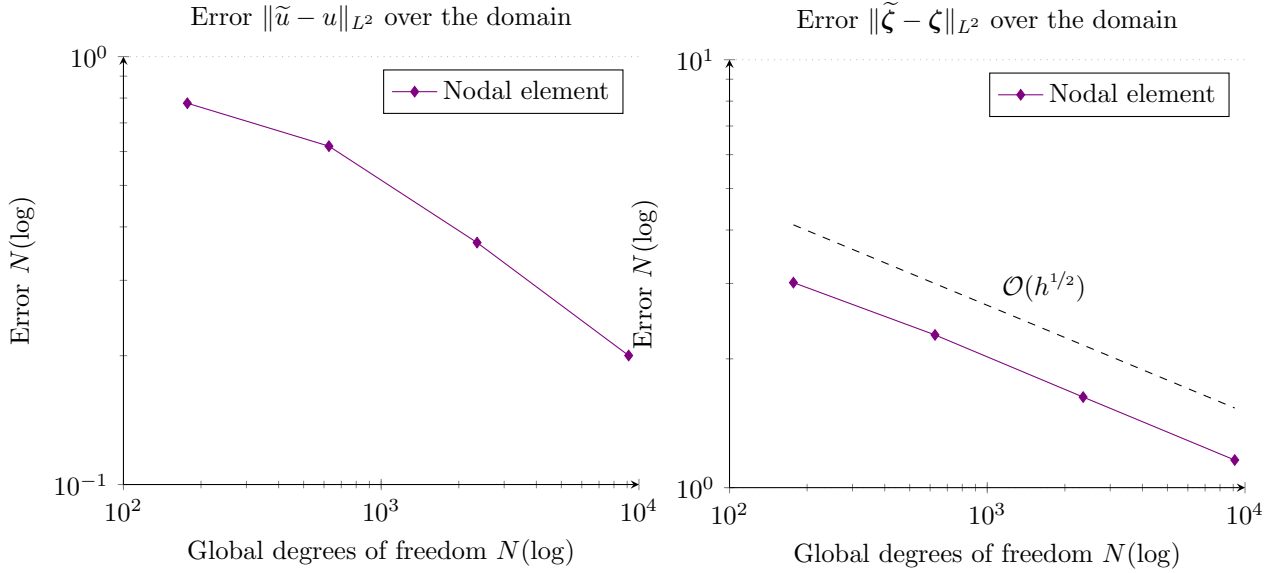
The convergence graph in \cref{fig:tent_con} depicts the slow sub-optimal convergence of the nodal element, compare \cref{eq:conv_rate_primal}. Note, the error in the hybrid element for the same meshes is always at a factor $10^{-15}$ for both $u$ and $\bm{\zeta}$. Due to the higher continuity conditions of the nodal element, it could never find the analytical solution, but would converge further towards it with each refinement. 

We present a second example allowing us to compare the convergence rates for both formulations. Let $\Omega=[0,1] \times [0,1]$, $\mue=\muma=\mumi=\Lc=1$,  and $\Gamma_D^u=\Gamma_D^\zeta=\partial\Omega$. For the given exact solution $\{\widetilde{u},\widetilde{\bm{\zeta}}\}\in \Honez(\Omega)\times\Hcz\Omega)$ 
\begin{align}
	\widetilde{u}(x,y) = \exp(1-x)y(1-y)\left \{ \begin{matrix}
	x & \text{for} & x \leq 0.5 \\
	1-x & \text{for} & x > 0.5  
	\end{matrix}  \right . \, , \quad
	\widetilde{\bm{\zeta}} = \nabla \widetilde{u} \, ,
	\label{eq:hcex}
\end{align}
the corresponding boundary conditions and external forces result in
\begin{align}
u(x,y)\at_{\partial \Omega}=0,\qquad \langle \bm{\zeta} ,\, \bm{\tau} \rangle \at_{\partial \Omega}=0,\qquad f=0,\qquad \bm{\omega}=2\widetilde{\bm{\zeta}} \, .
\end{align}
Here, the boundary conditions are compatible with $\bm{\zeta}\in [\Hone(\Omega)]^2$, but the exact solution is only in $\Hc,\Omega)$, not in $[\Hone(\Omega)]^2$. We use structured quadrilateral meshes (see \cref{fig:hcex}) resolving the interface at $x=0.5$, where the normal component of the exact solution of $\bm{\zeta}$ jumps, with linear, quadratic and cubic polynomials for the nodal elements. We observe that higher polynomial degrees do not increase the convergence rate and only sub-optimal root-convergence is achieved (see \cref{fig:tent2_con}). For linear and quadratic ansatz functions in the primal $\Hc)$ method we observe optimal convergence rates.
\begin{figure}
	\centering
	\begin{tikzpicture}
	\begin{loglogaxis}[
	/pgf/number format/1000 sep={},
	title={Error $\| \widetilde{\bm{\zeta}} - \bm{\zeta} \|_{\Hc)}$ over the domain},
	axis lines = left,
	xlabel={Global degrees of freedom $N(\log)$},
	ylabel={Error $N(\log)$},
	xmin=10, xmax=10000000,
	ymin=0.000001, ymax=5,
	xtick={100,1000,10000,100000,1000000,10000000},
	ytick={0.000001,0.00001,0.0001,0.001,0.01,0.1,1},
	legend pos=south west,
	ymajorgrids=true,
	grid style=dotted,
	]
	
	\addplot[color=violet, mark=diamond*]
	coordinates {
		(27, 1.0481213568936905)
		(75, 0.7767762484993705)
		(243, 0.5102462550613845)
		(867, 0.3317527388592938)
		(3267, 0.2204458165602305)
		(12675, 0.14992199699565636)
		(49923, 0.10368905661996256)
		(198147, 0.0724532449654316)
	};
	\addlegendentry{Nodal $k=1$}

	\addplot[color=red, mark=square]
	coordinates {
		(75, 0.38279125615224696)
		(243, 0.2860257803028133)
		(867, 0.20289234255480684)
		(3267, 0.14353689280570445)
		(12675, 0.10150822818022964)
		(49923, 0.07177935132630152)
		(198147, 0.05075605683141311)
		(789507, 0.03589002134894577)
	};
	\addlegendentry{Nodal $k=2$}

	\addplot[color=teal, mark=x]
	coordinates {
		(147, 0.24647035908987056)
		(507, 0.17299925370584796)
		(1875, 0.122189265180536)
		(7203, 0.0863804735830759)
		(28227, 0.061076598108044285)
		(111747, 0.04318703370431983)
		(444675, 0.030537730404252658)
		(1774083, 0.021593416069601412)
	};
	\addlegendentry{Nodal $k=3$}

	\addplot[color=orange, mark=o]
	coordinates {
		(21, 0.4968017363137966)
		(81, 0.23329511606525355)
		(297, 0.11477727883444597)
		(1113, 0.05715759775160357)
		(4281, 0.028550085995806752)
		(16761, 0.014271462115147935)
		(66297, 0.007135283829055237)
		(263673, 0.003567586028810376)
	};
	\addlegendentry{Hybrid $k=1$}

	\addplot[color=blue, mark=+]
	coordinates {
		(65, 0.024058829214126297)
		(241, 0.00605917357530029)
		(905, 0.0015166668251078207)
		(3481, 0.00037926014064996636)
		(13625, 9.482021896516824e-05)
		(53881, 2.3705359542877195e-05)
		(214265, 5.926358356945469e-06)
		(854521, 1.4815907260563721e-06)
	};
	\addlegendentry{Hybrid $k=2$}
	
	\addplot[dashed,color=black, mark=none]
	coordinates {
		(1000, 0.5*1.0669676460233537)
		(2000000, 0.5*0.15954887690834965)
	};
	
	\addplot[dashed,color=black, mark=none]
	coordinates {
		(100, 0.1)
		(100000, 0.003162277660168379)
	};
	
	\addplot[dashed,color=black, mark=none]
	coordinates {
		(500, 0.0059999999999999992)
		(300000, 9.999999999999997e-06)
	};
	
	\end{loglogaxis}
	\draw (3.5,5.2) node[anchor=north west] {$\mathcal{O}(h^{\nicefrac{1}{2}})$};
	\draw (2.2,3.6) node[anchor=north west] {$\mathcal{O}(h)$};
	\draw (3.5,2.5) node[anchor=north west] {$\mathcal{O}(h^2)$};
	\end{tikzpicture}

	\caption{Convergence rates of the microdistortion on both element formulations across multiple polynomial degrees undergoing mesh refinement.}
	\label{fig:tent2_con}
\end{figure}
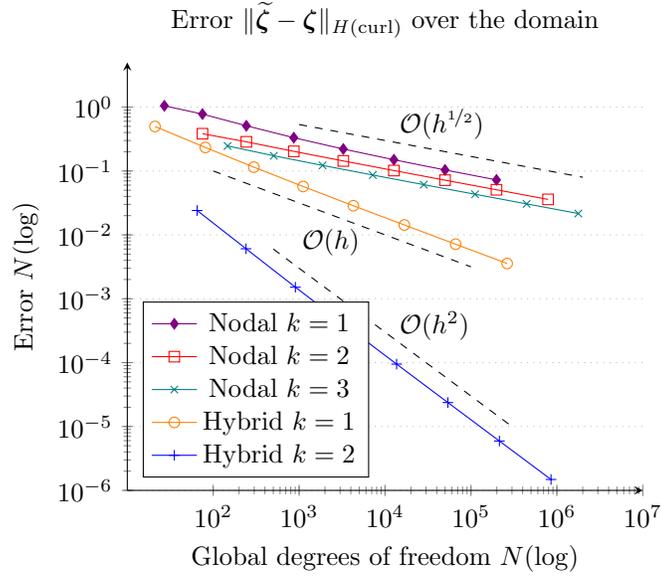

\begin{figure}
	\centering
	\begin{subfigure}{0.3\linewidth}
		\centering
		\includegraphics[width=1\linewidth]{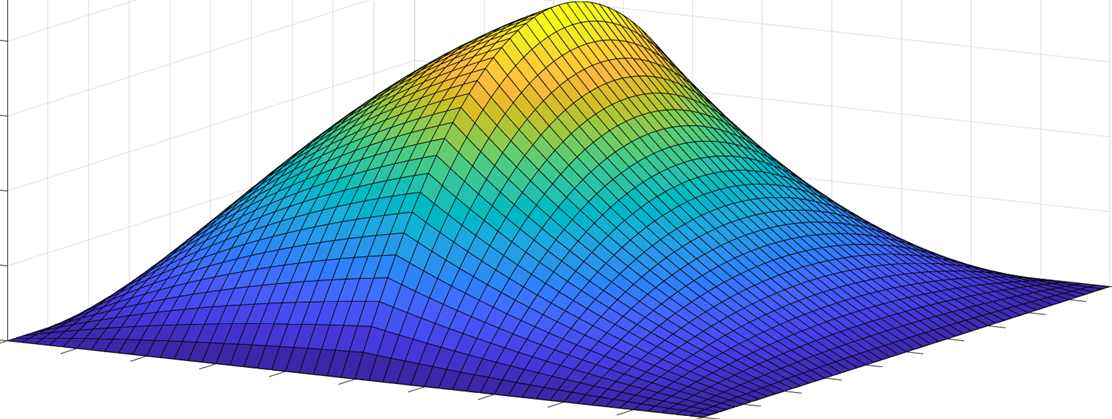}
		\caption{Displacement analytical solution}
	\end{subfigure}
	\begin{subfigure}{0.3\linewidth}
		\centering
		\includegraphics[width=1\linewidth]{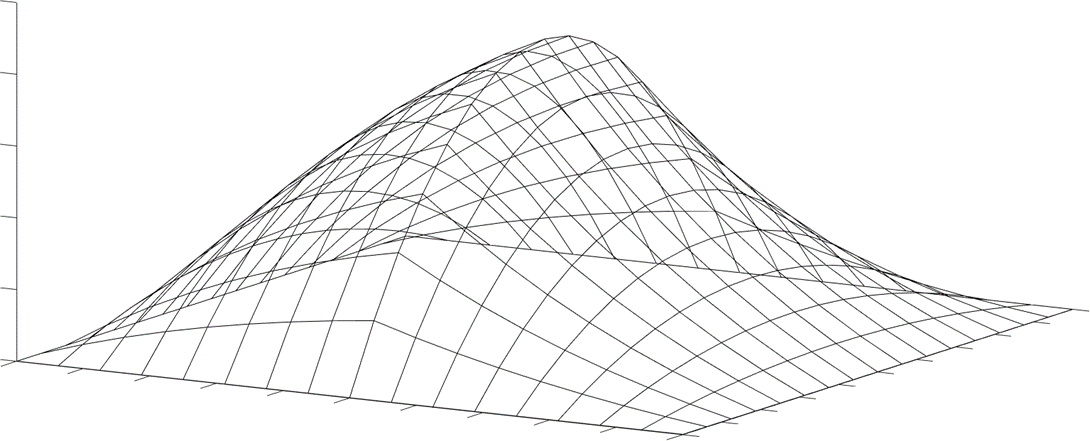}
		\caption{Displacement with 256 hybrid elements}
	\end{subfigure}
	\begin{subfigure}{0.3\linewidth}
		\centering
		\includegraphics[width=1\linewidth]{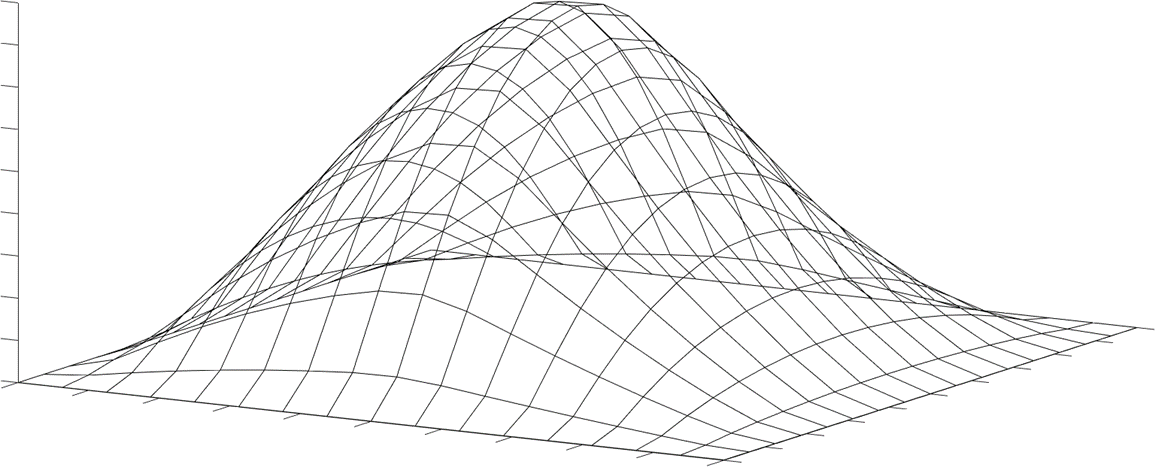}
		\caption{Displacement with 256 nodal elements}
	\end{subfigure}
	
	\begin{subfigure}{0.3\linewidth}
		\centering
		\includegraphics[width=1\linewidth]{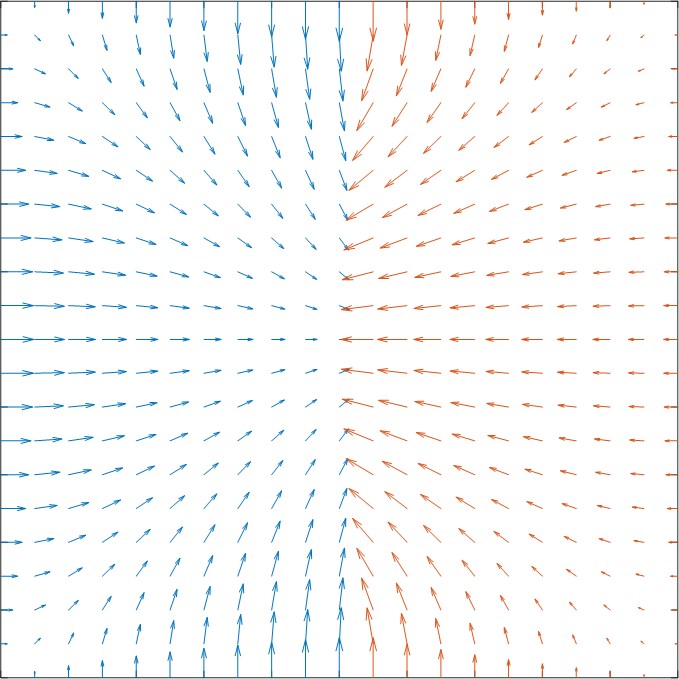}
		\caption{Microdistortion analytical solution}
	\end{subfigure}
	\begin{subfigure}{0.3\linewidth}
		\centering
		\includegraphics[width=1\linewidth]{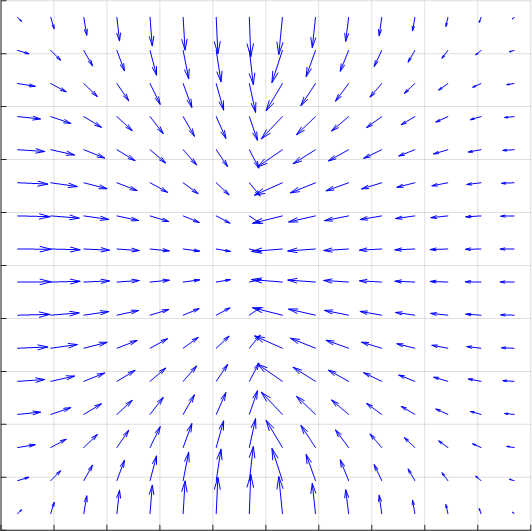}
		\caption{Microdistortion with 256 hybrid elements}
	\end{subfigure}
	\begin{subfigure}{0.3\linewidth}
		\centering
		\includegraphics[width=1\linewidth]{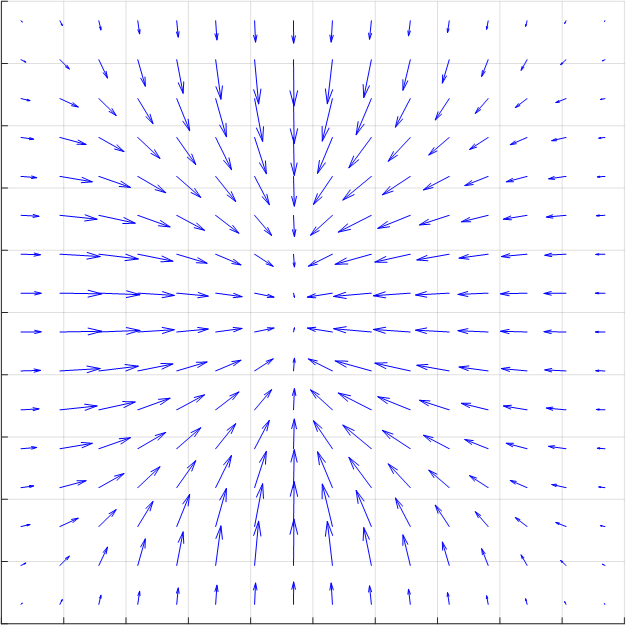}
		\caption{Microdistortion with 256 nodal elements}
	\end{subfigure}
	
	\caption{Analytical and finite element solutions of the displacement and microdistortion fields according to \cref{eq:hcex}.}
	\label{fig:hcex}
\end{figure}

\subsection{Convergence for $\Lc \to 0$}
As mentioned in \cref{ch:2}, the characteristic length $\Lc$ represents an important term in the relaxed micromorphic theory. This scalar governs the relation of the relaxed micromorphic continuum to the standard Cauchy continuum. In the previous examples we have been able to generate stable results for the case $\Lc = 1$. In this example we consider the limit $\Lc \to 0$, which can be interpreted as a highly homogenous material. In the $\Lc = 0$ setting, the relaxed micromorphic continuum retrieves the results of the classical Cauchy continuum, no external moments $\bm{\omega}$ occur and the microdistortion $\bm{\zeta}$ lives in $[\Le(\Omega)]^2$. This results in the emergence of a single Poisson equation for $u$ (see \cref{rem:membrane}), being an analogue of the standard membrane partial differential equation.
We define the domain $\Omega = [-5, \, 5] \times [-5, \, 5]$ with $\mue, \, \mumi = 1 ,\, \Lc = 0$ and the imposed displacement
\begin{align}
\widetilde{u} (x,y) = 2 - \sin(x)^2 + \cos(x)^2 - \sin(y)^2 + \cos(y)^2 \, .
\label{eq:ana}
\end{align}
We use $\widetilde{u}$ to recover the analytical solution for $\widetilde{\bm{\zeta}}$
	\begin{align}
	\widetilde{\bm{\zeta}} = \dfrac{\mue}{\mumi + \mue} \nabla \widetilde{u} =  \begin{bmatrix}
	-2 \cos(x) \sin(x)\\[2ex]
	 -2 \cos(y) \sin(y)
	\end{bmatrix} \, ,
	\label{eq:ana2}
	\end{align}
	and the resulting right-hand side
	\begin{align}
		f = 4 \, (\cos(x)^2 + \cos(y)^2 - \sin(x)^2 - \sin(y)^2) \, .
	\end{align}
	Note, since we require $\bm{\zeta} \in [\Le(\Omega)]^2$, no boundary conditions can be prescribed for $\bm{\zeta}$. The microdistortion field $\bm{\zeta}$ can always be approximated using either $\Hc)$ or $[\Hone]^2$ elements. However, the direct use of discontinuous $[\Le]^2$ elements for $\bm{\zeta}$ requires less computation and can also capture gradient fields. With \cref{thm:zeta} we have for $\bm{\omega}=0$ the regularity result that $\bm{\zeta}$ is in fact a gradient field and thus $\bm{\zeta}\in\Hc,\Omega)$, which confirms to use N{\'e}d{\'e}lec elements without risk of sub-optimal convergence rates, compare \cref{subsec:solution_hcurl}.
	The finite element solution converges towards the analytical solution as expected with optimal rate, see \cref{fig:lc0,fig:mem_con}.
	
	\begin{figure}
		\centering
		\begin{subfigure}{0.3\linewidth}
			\centering
			\includegraphics[width=1\linewidth]{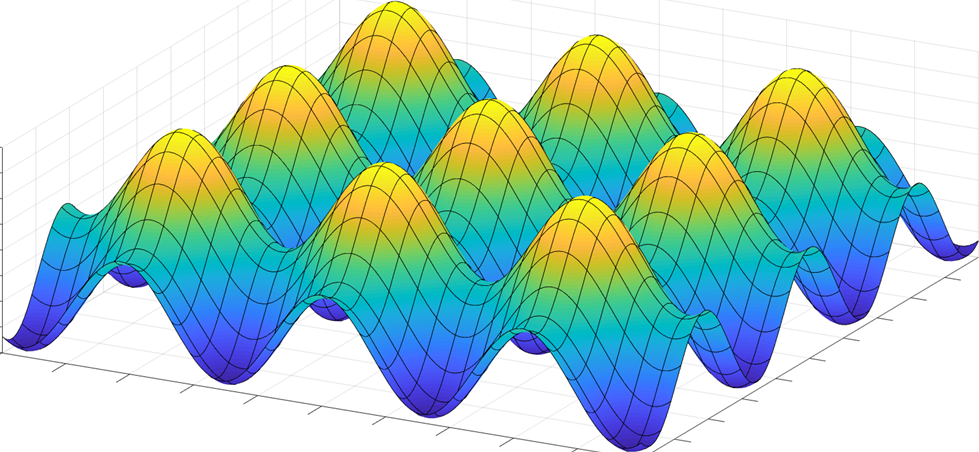}
			\caption{Displacement analytical solution}
		\end{subfigure}
		\begin{subfigure}{0.3\linewidth}
			\centering
			\includegraphics[width=1\linewidth]{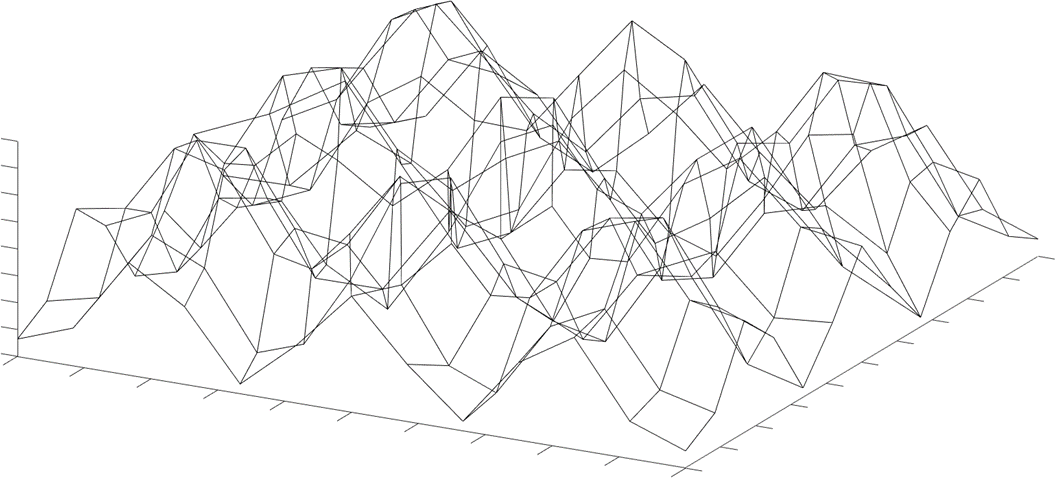}
			\caption{Displacement with 180 hybrid elements}
		\end{subfigure}
		\begin{subfigure}{0.3\linewidth}
			\centering
			\includegraphics[width=1\linewidth]{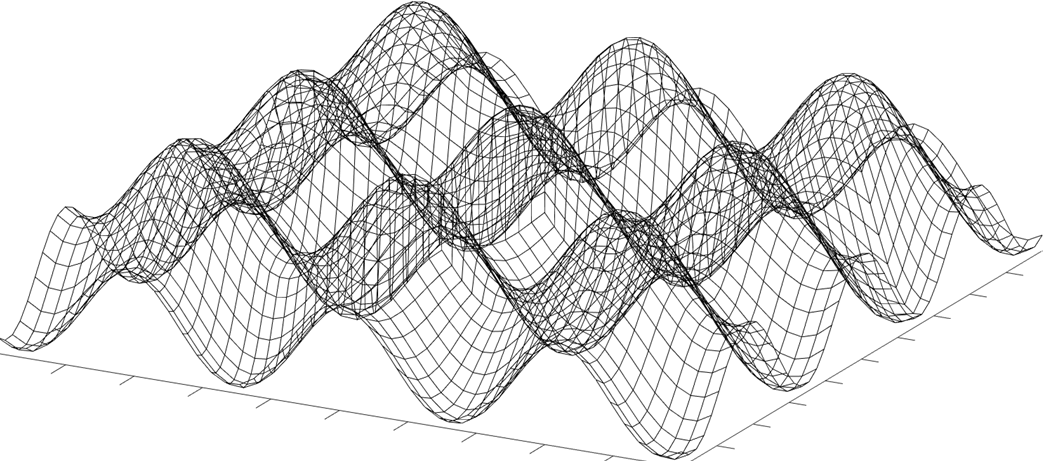}
			\caption{Displacement with 2880 hybrid elements}
		\end{subfigure}
		
		\begin{subfigure}{0.3\linewidth}
			\centering
			\includegraphics[width=1\linewidth]{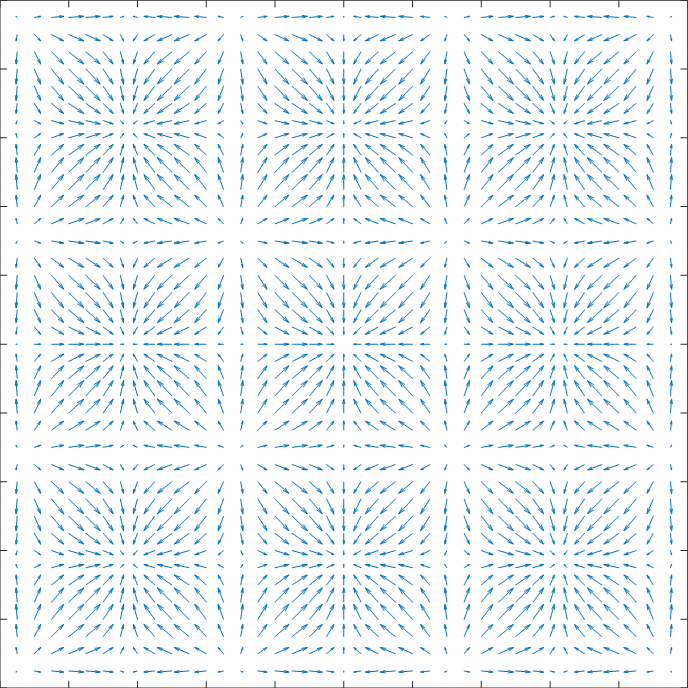}
			\caption{Microdistortion analytical solution}
		\end{subfigure}
		\begin{subfigure}{0.3\linewidth}
			\centering
			\includegraphics[width=1\linewidth]{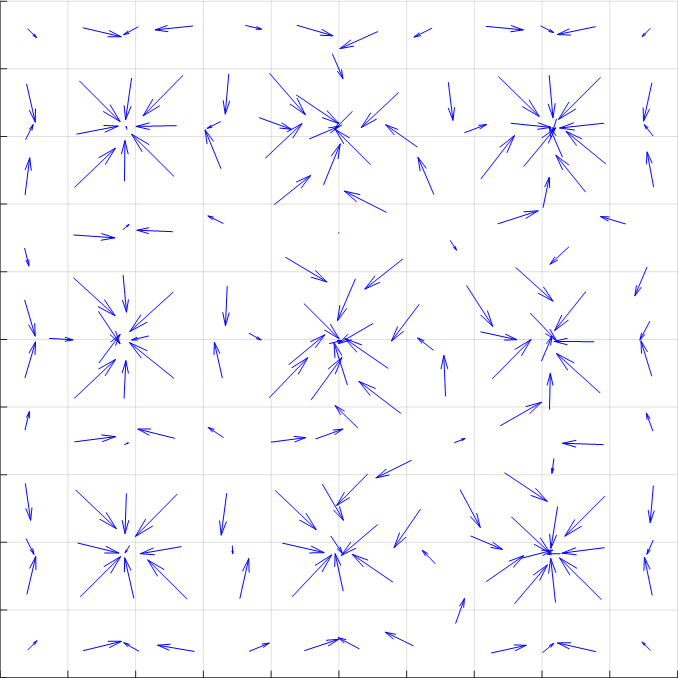}
			\caption{Microdistortion with 180 hybrid elements}
		\end{subfigure}
		\begin{subfigure}{0.3\linewidth}
			\centering
			\includegraphics[width=1\linewidth]{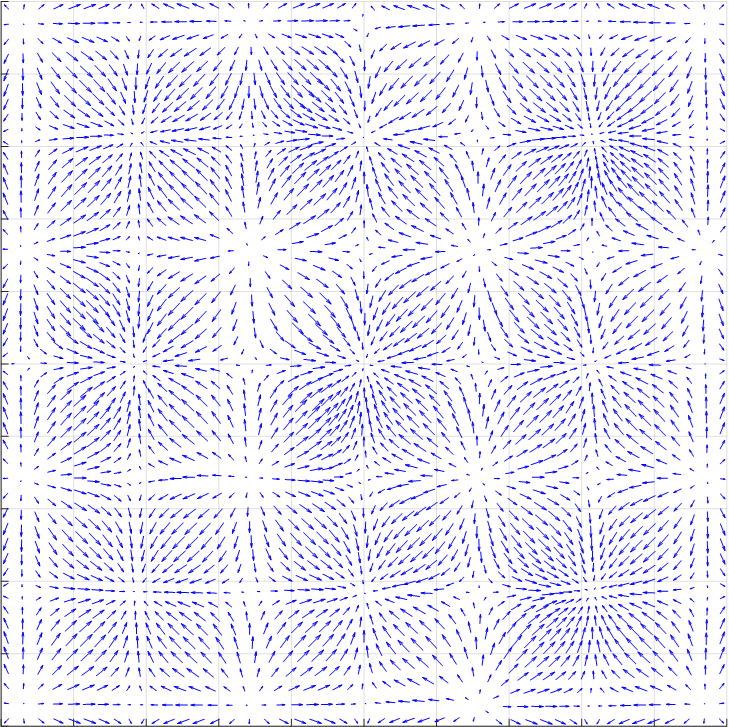}
			\caption{Microdistortion with 2880 hybrid elements}
		\end{subfigure}
		
		\caption{Analytical solutions and finite element solutions on unstructured grids of the displacement and microdistortion fields according to \cref{eq:ana,eq:ana2}.}
		\label{fig:lc0}
	\end{figure}
	
	\begin{figure}
		\centering
		\begin{subfigure}{0.48\linewidth}
			\begin{tikzpicture}
			\begin{loglogaxis}[
			/pgf/number format/1000 sep={},
			title={Error $\| \widetilde{u} - u \|_{\Le}$ over the domain},
			axis lines = left,
			xlabel={Global degrees of freedom $N(\log)$},
			ylabel={Error $N(\log)$},
			xmin=100, xmax=10000,
			ymin=0.1, ymax=10,
			xtick={100,1000,10000},
			ytick={0.1,1,10},
			legend pos=north east,
			ymajorgrids=true,
			grid style=dotted,
			]
			\addplot[
			color=violet,
			mark=diamond*,
			]
			coordinates {
				(161,	5.873579504)
			    (590,	1.890223949)
				(2258,	0.497215006)
				(8834,	0.126110675)
				
			};
			\addlegendentry{Hybrid element}
			\addplot[
			dashed,
			color=black,
			mark=none,
			]
			coordinates {
				(500,1.5*2.5)
				(7000,1.5*0.17857142857142)
			};
			
			\end{loglogaxis}
			\draw (4,3.7) node[anchor=north west] {$\mathcal{O}(h^2)$};
			\end{tikzpicture}
		\end{subfigure}
		\begin{subfigure}{0.48\linewidth}
			\begin{tikzpicture}
			\begin{loglogaxis}[
			/pgf/number format/1000 sep={},
			title={Error $\| \widetilde{\bm{\zeta}} - \bm{\zeta} \|_{\Le}$ over the domain},
			axis lines = left,
			xlabel={Global degrees of freedom $N(\log)$},
			ylabel={Error $N(\log)$},
			xmin=100, xmax=10000,
			ymin=1, ymax=10,
			xtick={100,1000,10000},
			ytick={1,10},
			legend pos=north east,
			ymajorgrids=true,
			grid style=dotted,
			]
			\addplot[
			color=violet,
			mark=diamond*,
			]
			coordinates {
				(161,	6.888790992)
				(590,	4.276703301)
				(2258,	2.179266465)
				(8834,	1.095485665)
				
			};
			\addlegendentry{Hybrid element}
			
			\addplot[
			dashed,
			color=black,
			mark=none,
			]
			coordinates {
				(590,5)
				(7000,1.451)
			};
			
			\end{loglogaxis}
			\draw (4,3.5) node[anchor=north west] {$\mathcal{O}(h)$};
			\end{tikzpicture}
		\end{subfigure}
		\caption{Convergence behaviour of the hybrid element formulation under mesh refinement for the case $\Lc = 0$.}
		\label{fig:mem_con}
	\end{figure}
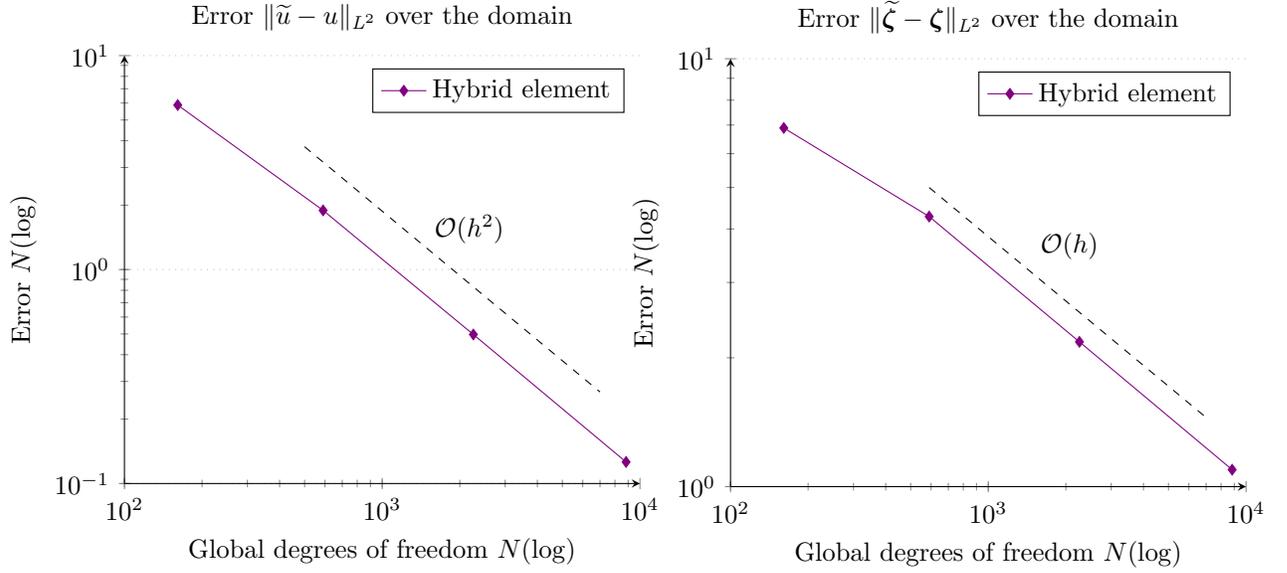

\subsection{Robustness in $\Lc$}
The upper limit of the characteristic length $\Lc$ is defined to be infinity. 
In this example we prove the robustness of our computations for $\Lc \to \infty$.
The analytical solution on $\Omega=[-4,4] \times [-4,4]$ with homogeneous Dirichlet data on $\partial \Omega$ and $\mue=\muma=\mumi=1$ is given by
\begin{subequations}
\label{eq:ex_roust_lc}
\begin{align}
	&\widetilde{u}(x,y) = \cos \left ( \dfrac{\pi \, x}{8} \right ) (y^2 - 16) \exp \left( \dfrac{x+y}{100}\right) \, , \label{eq:ex_roust_lc_a}\\
	&\widetilde{\bm{\zeta}}(x,y) = 2 \begin{bmatrix}
	x (y^2-16) \\
	y(x^2-16) 
	\end{bmatrix} + \dfrac{1}{\Lc^2} \left( \dfrac{x^2}{8} - 2 \right) \left(\dfrac{y^2}{8} -2 \right) \begin{bmatrix}
	-y \\
	x
	\end{bmatrix} \, ,\label{eq:ex_roust_lc_b}
\end{align}
\end{subequations}
from which we can extract the resulting force fields according to \cref{eq:strong1} and \cref{eq:strong2}. We test for convergence using linear elements.

\begin{figure}
	\begin{subfigure}{0.48\linewidth}
		\begin{tikzpicture}
		\begin{loglogaxis}[
		/pgf/number format/1000 sep={},
		title={Primal hybrid method convergence},
		axis lines = left,
		xlabel={Dofs $N(\log)$},
		ylabel={$\|\widetilde{\bm{\zeta}}-\bm{\zeta}\|_{\Hc)}/\|\widetilde{\bm{\zeta}}\|_{\Hc)}$ $N(\log)$},
		xmin=0, xmax=100000,
		ymin=0, ymax=1,
		xtick={0,1,10,100,1000,10000,100000},
		ytick={0.01,0.1,1},
		legend pos=north east,
		ymajorgrids=true,
		grid style=dotted,
		]
		\addplot[
		color=black,
		mark=diamond*,
		]
		coordinates {
			(21,0.5216854447660368)
			(65,0.25421051377935616)
			(225,0.1261947680976178)
			(833,0.06298088577981854)
			(3201,0.03147549422729775)
			(12545,0.015735860700113253)
			(49665,0.007867693930785647)
		};
		\addlegendentry{$\Lc = 10^0$}
		\addplot[
		color=blue,
		mark=o,
		]
		coordinates {
			(21,0.5172795699485528)
			(65,0.2523172272132265)
			(225,0.12529036092846954)
			(833,0.06253627575394381)
			(3201,0.0312545334628986)
			(12545,0.0156255666484233)
			(49665,0.007812570830333642)
		};
		\addlegendentry{$\Lc = 10^2$}
		\addplot[
		color=purple,
		mark=square,
		]
		coordinates {
			(21,0.5172795699038467)
			(65,0.2523173155574729)
			(225,0.12529038826958844)
			(833,0.06253627979219568)
			(3201,0.03125453398671077)
			(12545,0.015625566721171512)
			(49665,0.0078125710156938)
		};
		\addlegendentry{$\Lc = 10^4$}
		\addplot[
		color=magenta,
		mark=+,
		]
		coordinates {
			(21,0.5172795709783622)
			(65,0.25231731613179603)
			(225,0.12529041133745233)
			(833,0.06253697062747193)
			(3201,0.03127123691924457)
			(12545,0.016316343821344368)
			(49665,0.018692260470951332)
		};
		\addlegendentry{$\Lc = 10^6$}
		\addplot[
		color=teal,
		mark=x,
		]
		coordinates {
			(21,0.5172796025169429)
			(65,0.25232104092651275)
			(225,0.12546259698286366)
			(833,0.06745685908391734)
			(3201,0.1418303803906794)
		};
		\addlegendentry{$\Lc = 10^7$}
		\end{loglogaxis}
		\end{tikzpicture}
	\end{subfigure}
	\begin{subfigure}{0.48\linewidth}
		\begin{tikzpicture}
		\begin{loglogaxis}[
		/pgf/number format/1000 sep={},
		title={Mixed hybrid method convergence},
		axis lines = left,
		xlabel={Dofs $N(\log)$},
		ylabel={$\|\widetilde{\bm{\zeta}}-\bm{\zeta}\|_{\Hc)}/\|\widetilde{\bm{\zeta}}\|_{\Hc)}$ $N(\log)$},
		xmin=0, xmax=100000,
		ymin=0, ymax=1,
		xtick={0,1,10,100,1000,10000,100000},
		ytick={0.01,0.1,1},
		legend pos=north east,
		ymajorgrids=true,
		grid style=dotted,
		]
		\addplot[
		color=black,
		mark=diamond*,
		]
		coordinates {
			(26,0.5216854447660368)
			(82,0.25421051377935616)
			(290,0.1261947680976178)
			(1090,0.06298088577981853)
			(4226,0.03147549422729774)
			(16642,0.015735860700113253)
			(66050,0.007867693930785648)
		};
		\addlegendentry{$\Lc = 10^0$}
		\addplot[
		color=blue,
		mark=o,
		]
		coordinates {
			(26,0.5172795699485528)
			(82,0.2523172272132265)
			(290,0.12529036092846954)
			(1090,0.0625362757539438)
			(4226,0.03125453346289859)
			(16642,0.015625566648423295)
			(66050,0.007812570830333633)
		};
		\addlegendentry{$\Lc = 10^2$}
		\addplot[
		color=purple,
		mark=square,
		]
		coordinates {
			(26,0.5172795699039087)
			(82,0.25231731555749004)
			(290,0.12529038826959582)
			(1090,0.06253627979219266)
			(4226,0.031254533986438425)
			(16642,0.015625566713676684)
			(66050,0.007812570838093159)
		};
		\addlegendentry{$\Lc = 10^4$}
		\addplot[
		color=magenta,
		mark=+,
		]
		coordinates {
			(26,0.5172795699039087)
			(82,0.2523173155663307)
			(290,0.12529038828484493)
			(1090,0.06253627981578437)
			(4226,0.03125453402827667)
			(16642,0.015625566815436236)
			(66050,0.00781257103690669)
		};
		\addlegendentry{$\Lc = 10^6$}
		\addplot[
		color=teal,
		mark=x,
		]
		coordinates {
			(26,0.5172795699039088)
			(82,0.25231731556633163)
			(290,0.12529044948170698)
			(1090,0.06253646029642439)
			(4226,0.031254915524514634)
			(16642,0.01562665720001808)
			(66050,0.007816309441558578)
		};
		\addlegendentry{$\Lc = 10^7$}
		\end{loglogaxis}
		\end{tikzpicture}
	\end{subfigure}
	\caption{Convergence behaviour for fixed $\Lc$ on $1\times1$, $2\times2$, $4\times4$, $8\times8$, $16\times16$, $32\times32$, and $64\times64$ structured  quadrilateral grids for the primal and mixed hybrid methods.}
	\label{fig:lcconvergence}
\end{figure}
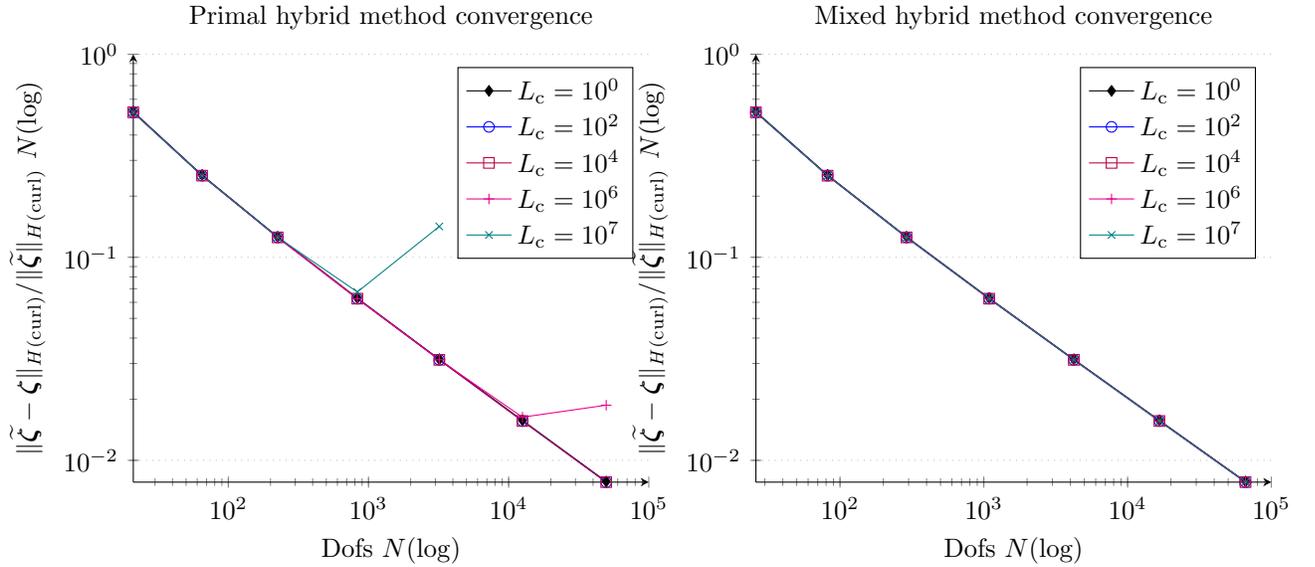

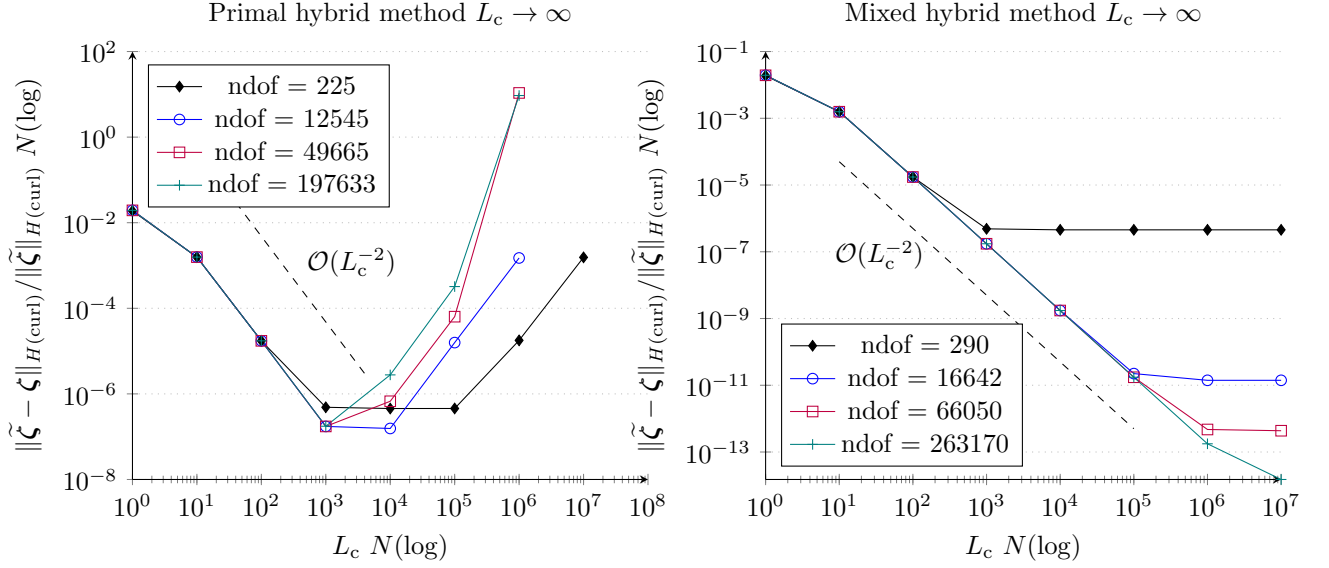
\begin{figure}
	\centering
	\begin{subfigure}{0.48\linewidth}
		\begin{tikzpicture}
		\begin{loglogaxis}[
		/pgf/number format/1000 sep={},
		title={Primal hybrid method $\Lc\to\infty$},
		axis lines = left,
		xlabel={$\Lc$ $N(\log)$},
		ylabel={$\|\widetilde{\bm{\zeta}}-\bm{\zeta}\|_{\Hc)}/\|\widetilde{\bm{\zeta}}\|_{\Hc)}$ $N(\log)$},
		xmin=0, xmax=1e8,
		ymin=0.00000001, ymax=100,
		xtick={0,1,10,100,1000,10000,100000,1000000,10000000,1e8},
		ytick={0.00000001,0.000001,0.0001,0.01,1,100},
		legend pos=north west,
		ymajorgrids=true,
		grid style=dotted,
		]
		\addplot[
		color=black,
		mark=diamond*,
		]
		coordinates {
			(1,0.018824502)
			(1e1,0.001572)
			(1e2,1.737e-5)
			(1e3,4.865e-7)
			(1e4,4.55e-7)
			(1e5,4.56e-7)
			(1e6,1.78e-5)
			(1e7,0.001549)
		};
		\addlegendentry{ndof = 225}
		\addplot[
		color=blue,
		mark=o,
		]
		coordinates {
			(1,0.019429)
			(1e1,0.001572)
			(1e2,1.736e-5)
			(1e3,1.737e-7)
			(1e4,1.556e-7)
			(1e5,1.58e-5)
			(1e6,0.0015)
		};
		\addlegendentry{ndof = 12545}
		\addplot[
		color=purple,
		mark=square,
		]
		coordinates {
			(1,0.019429)
			(1e1,0.001572)
			(1e2,1.736e-5)
			(1e3,1.736e-7)
			(1e4,6.725e-7)
			(1e5,6.37e-5)
			(1e6,10.771)
		};
		\addlegendentry{ndof = 49665}
		\addplot[
		color=teal,
		mark=+,
		]
		coordinates {
			(1,0.019429)
			(1e1,0.001572)
			(1e2,1.736e-5)
			(1e3,1.755e-7)
			(1e4,2.78e-6)
			(1e5,0.000322)
			(1e6,9.371)
		};
		\addlegendentry{ndof = 197633}
		\addplot[
		color=black,
		dashed,
		]
		coordinates {
			(4e1,3e-2)
			(4e2,3e-4)
			(4e3,3e-6)
		};
		\end{loglogaxis}
		\draw (2.2,3.2) node[anchor=north west] {$\mathcal{O}(\Lc^{-2})$};
		\end{tikzpicture}
	\end{subfigure}
	\begin{subfigure}{0.48\linewidth}
		\begin{tikzpicture}
		\begin{loglogaxis}[
		/pgf/number format/1000 sep={},
		title={Mixed hybrid method $\Lc\to\infty$},
		axis lines = left,
		xlabel={$\Lc$ $N(\log)$},
		ylabel={$\|\widetilde{\bm{\zeta}}-\bm{\zeta}\|_{\Hc)}/\|\widetilde{\bm{\zeta}}\|_{\Hc)}$ $N(\log)$},
		xmin=0, xmax=10000000,
		ymin=0, ymax=0.1,
		xtick={0,1,10,100,1000,10000,100000,1000000,10000000},
		ytick={0.0000000000001,0.00000000001,0.000000001,0.0000001,0.00001,0.001,0.1},
		legend pos=south west,
		ymajorgrids=true,
		grid style=dotted,
		]
		\addplot[
		color=black,
		mark=diamond*,
		]
		coordinates {
			(1,0.018824502)
			(1e1,0.001572)
			(1e2,1.737e-5)
			(1e3,4.865e-7)
			(1e4,4.54e-7)
			(1e5,4.54e-7)
			(1e6,4.54e-7)
			(1e7,4.54e-7)
		};
		\addlegendentry{ndof = 290}
		\addplot[
		color=blue,
		mark=o,
		]
		coordinates {
			(1,0.019429)
			(1e1,0.001572)
			(1e2,1.736e-5)
			(1e3,1.737e-7)
			(1e4,1.738e-9)
			(1e5,2.236e-11)
			(1e6,1.407e-11)
			(1e7,1.407e-11)
		};
		\addlegendentry{ndof = 16642}
		\addplot[
		color=purple,
		mark=square,
		]
		coordinates {
			(1,0.019429)
			(1e1,0.001572)
			(1e2,1.736e-5)
			(1e3,1.738e-7)
			(1e4,1.737e-9)
			(1e5,1.737e-11)
			(1e6,4.73e-13)
			(1e7,4.34e-13)
		};
		\addlegendentry{ndof = 66050}
		\addplot[
		color=teal,
		mark=+,
		]
		coordinates {
			(1,0.019429)
			(1e1,0.001572)
			(1e2,1.736e-5)
			(1e3,1.738e-7)
			(1e4,1.738e-9)
			(1e5,1.738e-11)
			(1e6,1.763e-13)
			(1e7,1.503e-14)
		};
		\addlegendentry{ndof = 263170}
		\addplot[
		color=black,
		dashed,
		]
		coordinates {
			(1e1,5e-5)
			(1e2,5e-7)
			(1e3,5e-9)
			(1e4,5e-11)
			(1e5,5e-13)
		};

		\end{loglogaxis}
		\draw (0.8,3.3) node[anchor=north west] {$\mathcal{O}(\Lc^{-2})$};
		\end{tikzpicture}
	\end{subfigure}
	\caption{Convergence behaviour for $\Lc \to \infty$ for fixed $4\times4$, $16\times16$, $32\times32$, $64\times64$ and $128\times128$ grids.}
	\label{fig:lc_inf_test}
\end{figure}
As expected from the theory, we observe uniform convergence up to the point where rounding errors occur in the primal method for very large $\Lc$ terms. The convergences of the mixed formulation remains stable for all values of $\Lc$ as it is not affected by rounding errors, cf. \cref{fig:lcconvergence}. Using lowest order linear nodal elements for $\bm{\zeta}$ leads to non-robust behaviour in $\Lc$ in terms of immense locking. Considering quadratic Lagrange elements overcomes this locking phenomena, however, at the cost of more dofs.

To test the convergence depending on $\Lc$, \cref{eq:mixed_method_conv_Lc}, for the case $\Lc\to\infty$ we use quadratic elements - i.e., quadratic $\Hone$ and N\'{e}d\'{e}lec elements, and linear $\Le$ elements for $m$ in the mixed formulation - in NGSolve and four different structured grids. The same domain as in the previous example is considered and for the limit solution \cref{eq:ex_roust_lc} is used, with $\Lc\to \infty$ in \cref{eq:ex_roust_lc_b}. Again, the primal methods suffers for large values of $\Lc$ from rounding errors, whereas for the mixed method we observe the expected quadratic convergence rate up to the discretization error, compare \cref{eq:conv_lc_h} and \cref{fig:lc_inf_test}.\newline

\subsection{Convergence for $\Lc \to \infty$}
We prove the theoretical result of \cref{thm:limit_gradu_zeta}, with the same domain, boundary conditions, and material constants as in the previous example, by setting the external force and moments  
\begin{align}
f = 0 \, , \quad r = (16-x^2)(16-y^2)(xy-y^2) \, ,\quad \Psi =x^3y^2-xy^2(1-x)-\frac{256}{9}\, ,\quad \bm{\omega} = \nabla r + \Dc(\Psi) \, ,
\end{align}
and testing for convergence $\|\nabla u-\bm{\zeta}\|_{\Hc)}=\mathcal{O}(\Lc^{-2})$ for $\Lc\to\infty$ using NGSolve with linear base functions.

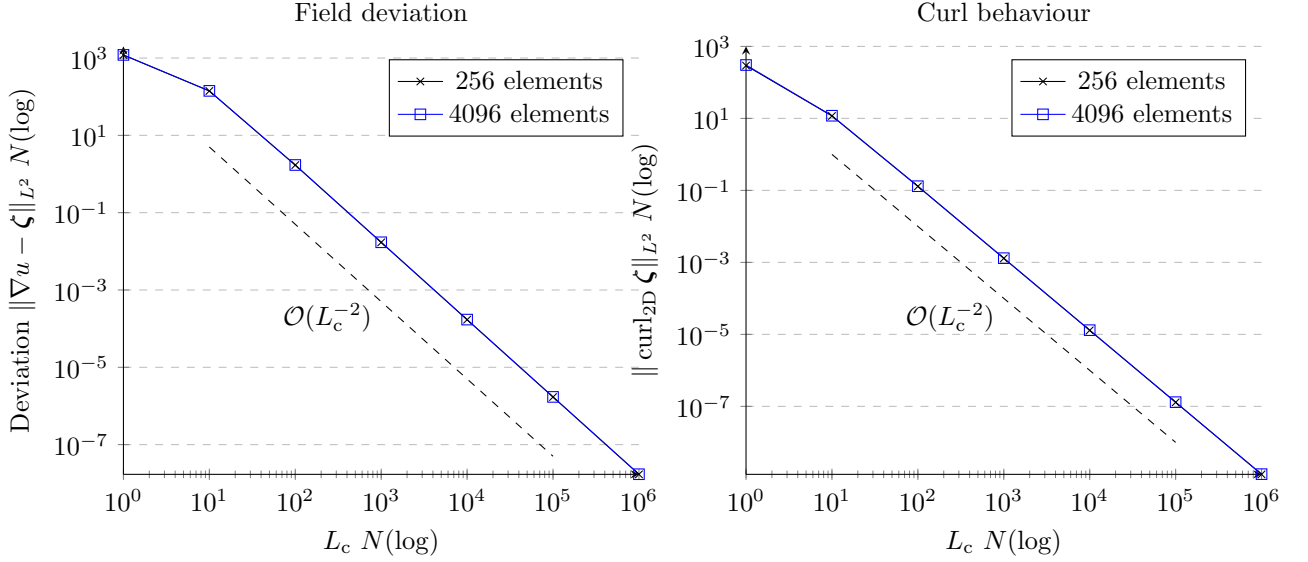
\begin{figure}
	\centering
	\begin{subfigure}{0.48\linewidth}
		\begin{tikzpicture}
		\begin{loglogaxis}[
		/pgf/number format/1000 sep={},
		title={Field deviation},
		axis lines = left,
		xlabel={$\Lc$ $N(\log)$},
		ylabel={Deviation $\| \nabla u - \bm{\zeta} \|_{\Le}$ $N(\log)$},
		xmin=0, xmax=1000000,
		ymin=0, ymax=2000,
		xtick={0,1,10,100,1000,10000,100000,1000000},
		ytick={0.0000001,0.00001,0.001,0.1,10,1000},
		legend pos=north east,
		ymajorgrids=true,
		grid style=dashed,
		]
		\addplot[
		color=black,
		mark=x,
		]
		coordinates {
			(1,1194.7210203536529)
			(10,140.44127647316296)
			(100,1.699082419004753)
			(1000, 0.017027730407461496)
			(10000,0.00017028100422524504)
			(100000,1.702810395631907e-06)
			(1000000,1.7027965508902514e-08)
		};
		\addlegendentry{256 elements}
		\addplot[
		color=blue,
		mark=square,
		]
		coordinates {
			(1,1205.8894192777095)
			(10,141.3888267047627)
			(100, 1.7105569924857034)
			(1000,0.01714274145447953)
			(10000,0.00017143114224072985)
			(100000,1.7143133351772662e-06)
			(1000000,1.714470882962487e-08)
		};
		\addlegendentry{4096 elements}
		\addplot[
		color=black,
		dashed,
		]
		coordinates {
			(10,5)
			(100, 5e-2)
			(1000,5e-4)
			(10000,5e-6)
			(100000,5e-8)
		};

		\end{loglogaxis}
		\draw (2,2.4) node[anchor=north west] {$\mathcal{O}(\Lc^{-2})$};
		\end{tikzpicture}
	\end{subfigure}
	\begin{subfigure}{0.48\linewidth}
		\begin{tikzpicture}
		\begin{loglogaxis}[
		/pgf/number format/1000 sep={},
		title={Curl behaviour},
		axis lines = left,
		xlabel={$\Lc$ $N(\log)$},
		ylabel={$\| \curl \bm{\zeta} \|_{\Le}$ $N(\log)$},
		xmin=0, xmax=1000000,
		ymin=0, ymax=1000,
		xtick={0,1,10,100,1000,10000,100000,1000000},
		ytick={0.00000000001,0.000000001,0.0000001,0.00001,0.001,0.1,10,1000},
		legend pos=north east,
		ymajorgrids=true,
		grid style=dashed,
		]
		\addplot[
		color=black,
		mark=x,
		]
		coordinates {
			(1,292.7315971120642)
			(10,11.69160167992485)
			(100, 0.12909094065427285)
			(1000,0.0012924418982838588)
			(10000,1.2924572649962118e-05)
			(100000,1.292457159147918e-07)
			(1000000,1.2924540080682432e-09)
		};
		\addlegendentry{256 elements}
		\addplot[
		color=blue,
		mark=square,
		]
		coordinates {
			(1,303.86715103001467)
			(10,11.85328550134601)
			(100, 0.13070547546311248)
			(1000,0.0013085868856933859)
			(10000,1.3086022472297733e-05)
			(100000,1.3086022945633506e-07)
			(1000000,1.3086032929161493e-09)
		};
		\addlegendentry{4096 elements}
		
		\addplot[
		color=black,
		dashed,
		]
		coordinates {
			(10,1)
			(100, 1e-2)
			(1000,1e-4)
			(10000,1e-6)
			(100000,1e-8)
		};

		\end{loglogaxis}
		\draw (2,2.4) node[anchor=north west] {$\mathcal{O}(\Lc^{-2})$};
		\end{tikzpicture}
	\end{subfigure}
	\caption{Convergence behaviour of the difference $\nabla u-\bm{\zeta}$ and $\curl\bm{\zeta}$ for $\Lc \to \infty$ with primal hybrid method.}
	\label{fig:lcinf}
\end{figure}
The results are computed using the primal method. By staying within the rounding precision bounds retrieved from our investigation of the robustness in $\Lc$, we are able to find results converging quadratically to the previously derived expectations, see \cref{fig:lcinf}.

\subsection{The consistent coupling condition}
\label{subsec:num_ex_cons}
We conclude our investigation by considering the consistent coupling condition on both the full and relaxed micromorphic continuum models using NGSolve with the primal method. We set the domain $\Omega = [-4,4] \times [-4,4]$ with the material parameters $\mue, \, \mumi , \muma = 1$, the boundary conditions
\begin{align}
	u(x,y) \at_{\partial \Omega} = y^2 -x^2 , \, \qquad \langle \bm{\zeta} , \, \bm{\tau} \rangle \at_{\partial \Omega} = \langle \nabla u , \, \bm{\tau} \rangle \at_{\partial \Omega} = \langle \begin{bmatrix}
	-2x & 2y 
	\end{bmatrix}^T , \, \bm{\tau} \rangle \at_{\partial \Omega} \, ,
	\label{eq:fullvsrel}
\end{align}
and the external forces
\begin{align}
	f = 0 \, , \quad \qquad \bm{\omega} = \begin{bmatrix}
	-y & x
	\end{bmatrix}^T \, ,
\end{align}
and test for convergence in both micromorphic formulations with increasing characteristic lengths $\Lc$. 

\begin{figure}
	\centering
	\begin{subfigure}{0.48\linewidth}
		\begin{tikzpicture}
		\begin{loglogaxis}[
		/pgf/number format/1000 sep={},
		title={Energy in the relaxed micromorphic model},
		axis lines = left,
		xlabel={$\Lc$ $N(\log)$},
		ylabel={Energy $I(u,\bm{\zeta})$ $N(\log)$},
		xmin=1, xmax=1000000,
		ymin=1000, ymax=10000,
		xtick={1,10,100,1000,10000,100000,1000000},
		ytick={1000, 10000},
		legend pos=north east,
		ymajorgrids=true,
		grid style=dashed,
		]
		\addplot[
		color=black,
		mark=x,
		]
		coordinates {
			(1 ,1534.2056043353814)
			(10, 2589.2003333455145)
			(100, 2718.5804252472008 )
			(1000,2720.035133357932 )
			(10000, 2720.0496985876057 )
			(100000,2720.0498442745698 )
			(1000000, 2720.0501444577994)
		};
		\addlegendentry{256 elements}
		\addplot[
		color=blue,
		mark=square,
		]
		coordinates {
			(1 ,1526.8952938735458)
			(10, 2595.743248641167)
			(100,  2726.5183820712004 )
			(1000,  2727.989006761117 )
			(10000, 2728.003731382987 )
			(100000,2728.0038792432097 )
			(1000000, 2728.007944906196)
		};
		\addlegendentry{1024 elements}
		
		\addplot[
		color=violet,
		mark=diamond*,
		]
		coordinates {
			(1 ,1525.0018565734229)
			(10, 2597.384191509069)
			(100,  2728.510742578785 )
			(1000,  2729.9853764556688 )
			(10000, 2730.000141227262 )
			(100000,2730.0002987709795)
			(1000000, 2730.077540112807)
		};
		\addlegendentry{4096 elements}
		
		\end{loglogaxis}
		\end{tikzpicture}
	\end{subfigure}
	\begin{subfigure}{0.48\linewidth}
		\begin{tikzpicture}
		\begin{loglogaxis}[
		/pgf/number format/1000 sep={},
		title={Energy in the full micromorphic model},
		axis lines = left,
		xlabel={$\Lc$ $N(\log)$},
		ylabel={Energy $I(u,\bm{\zeta})$ $N(\log)$},
		xmin=1, xmax=1000000,
		ymin=1000, ymax=1e15,
		xtick={1,10,100,1000,10000,100000,1000000},
		ytick={1e3, 1e7, 1e11, 1e15},
		legend pos=north east,
		ymajorgrids=true,
		grid style=dashed,
		]
		\addplot[
		color=black,
		mark=x,
		]
		coordinates {
			(1 ,1930.0263225276706)
			(10, 29608.370698368657)
			(100, 2756732.387148474 )
			(1000,275467158.6937412 )
			(10000, 27546509768.74214 )
			(100000,2754650770773.381 )
			(1000000, 275465076871236.84)
		};
		\addlegendentry{256 elements}
		\addplot[
		color=blue,
		mark=square,
		]
		coordinates {
			(1 ,2017.2953458784796)
			(10, 40106.943430558196)
			(100,  3807372.788160398 )
			(1000,  380531955.1553537 )
			(10000, 38052990170.984634)
			(100000,3805298811753.7163 )
			(1000000, 380529880970026.25)
		};
		\addlegendentry{1024 elements}
		
		\addplot[
		color=violet,
		mark=diamond*,
		]
		coordinates {
			(1 ,2120.8789480216687)
			(10, 51003.107144379544)
			(100,  4897273.447490505 )
			(1000,  489522298.8198551 )
			(10000, 48952024815.097885 )
			(100000,4895202276442.722)
			(1000000, 489520227439203.8)
		};
		\addlegendentry{4096 elements}
		
		\end{loglogaxis}
		\end{tikzpicture}
	\end{subfigure}
	\caption{Energy convergence in the relaxed and full micromorphic models according to \cref{eq:fullvsrel}.}
	\label{fig:lcinffull}
\end{figure}
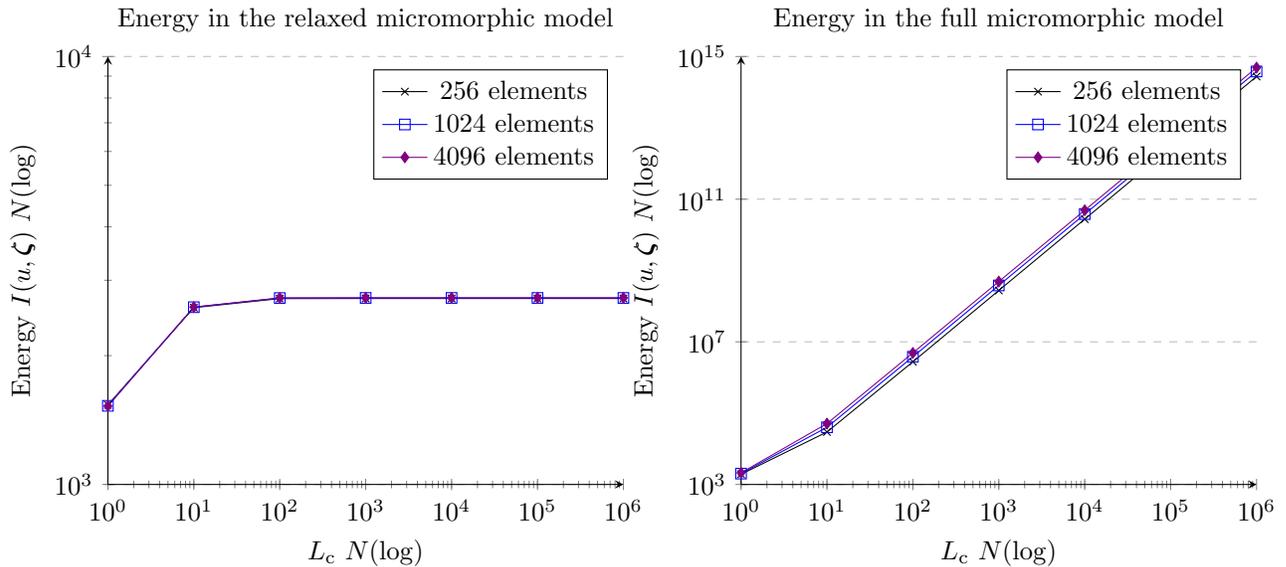
As observed in \cref{fig:lcinffull}, the relaxed micromorphic continuum converges towards a finite energy, whereas the non-trivial boundary conditions on the full micromorphic continuum lead to boundary-layers and consequently, ever-increasing energy for $\Lc \to \infty$. The result is consistent with the problematic mentioned in \cref{rem:full}.

\section{Conclusions and outlook}
The relaxed micromorphic continuum theory introduces the Curl operator in the formulation of the free energy functional. As a result, the solution of the weak form lies in the combined space $\Hone \times \Hc)$. The Lax--Milgram theorem confirms this result by assuring existence and uniqueness for the combined space. Our benchmarks with a completely nodal finite element show its capacity to approximate solutions in the combined space. However, the tests also show its inability to find the exact solution for discontinuous microdistortion fields and the corresponding sub-optimal convergence. A comparison between the linear nodal and hybrid element formulations also reveals the difference in the arising elemental stiffness matrices, namely $\bm{K}_{\text{nodal}} \in \mathbb{R}^{12\times 12}$ and $\bm{K}_{\text{hybrid}} \in \mathbb{R}^{8\times 8}$, resulting in slower computation times for the nodal element. In contrast, the hybrid element yields stable approximations and convergence rates for all tested scenarios, being capable of finding the exact solution also for discontinuous microdistortion fields. The relaxed micromorphic theory aims to capture the mechanical behaviour of metamaterials, highly homogeneous materials and the entire spectrum in between. To that end, the characteristic length $\Lc$ takes the role of a weighting parameter, determining the influence of the energy from the dislocation density (the energy depending on the curl operator). The range of the characteristic length $\Lc$ is an open topic of research into metamaterials. However, from a theoretical point of view, it may vary between zero and infinity. Our tests reveal the arising instability of convergence where increasingly large $\Lc$ parameters are concerned and emergence of locking effects if linear nodal elements are chosen to approximate the microdistortion. For the case of the hybrid element, lost precision can be recovered via the formulation of the corresponding mixed problem. Locking effects in the nodal version of the microdistortion can be alleviated via higher order polynomials at the cost of increased dofs. In addition, also in $\Lc = 0$ setting, where the external moment $\bm{\omega}$ vanishes, we recognize the optimality of using $\Hc)$-elements for the computation of the microdistortion, seeing as it is in fact the natural space for the microdistortion in this setting. Lastly, we recognize the advantage of the relaxed micromorphic continuum with regard to its ability to generate finite energies as $\Lc \to \infty$ for arbitrary boundary conditions. 

These findings build the basis for the extension of the formulation to the fully three-dimensional or a statically condensed two-dimensional version of the \AS{full} relaxed micromorphic continuum.

\section*{Acknowledgements}
\label{sec:acknowledgements}
The support by the Austrian Science Fund (FWF) project W\,1245 is gratefully acknowledged.
P. Neff acknowledges support in the framework of the DFG Priority Programme 2256 Variational Methods for Predicting Complex Phenomena in Engineering Structures and Materials Neff 902/10-1, No: 440935806.

\AS{The authors are very grateful to the referees for their comments.}
	
\bibliographystyle{spmpsci}   

\bibliography{Ref}   

\appendix
\section{Derivation of the strong form} \label{ap:B}
In order to find the strong form of the Euler-Lagrange equations to \cref{eq:weak} we start with the most general setting $\Gamma^u_D\cap \Gamma^u_N=\emptyset$, $\overline{\Gamma^u_D}\cup\overline{\Gamma^u_N}=\partial\Omega$ and $\Gamma^\zeta_D\cap \Gamma^\zeta_N=\emptyset$, $\overline{\Gamma^\zeta_D}\cup\overline{\Gamma^\zeta_N}=\partial\Omega$. The Dirichlet and Neumann boundary parts of $u$ and $\bm{\zeta}$ and assume that $|\Gamma_D^u|>0$ (for Lax--Milgram solvability). We assume smooth fields such that we can integrate by parts. Using the Green identity
\begin{align}
	\int_{\Omega} \di q \, \vb{v} \, \dd X = \oint_{\partial \Omega}   \langle q \, \bm{\nu} ,  \, \vb{v} \rangle \, \dd s - \int_{\Omega} \langle \nabla q , \, \vb{v} \rangle \, \dd X \, , \quad \vb{v} \in \C^1(\Omega,\mathbb{R}^2) \, , \quad q \in \C^1(\Omega,\mathbb{R}) \, , \label{eq:gr1}
\end{align}
where $\bm{\nu}$ is the normal vector on the boundary, and splitting the boundary terms of the first weak form \cref{eq:a}, we find
\begin{align}
	\int_{\Omega}  2 \mue \langle (\nabla u - \bm{\zeta}) , \, \nabla \delta u \rangle - \langle \delta u , \, f \rangle \, \dd X  = &\int_{\Gamma_D^u} \delta u \langle (\nabla u - \bm{\zeta}) , \, \bm{\nu} \rangle \, \dd s  + \int_{\Gamma_N^u} \delta u \langle (\nabla u - \bm{\zeta}) , \, \bm{\nu} \rangle \, \dd s \notag \\ & - \int_{\Omega} \langle \di (\nabla u - \bm{\zeta}) - f, \, \delta u \rangle \, \dd X = 0 \,  \quad \text{ for all } \delta u \in \C^1(\Omega,\mathbb{R}) \, . 
\end{align}
As the Dirichlet data is directly incorporated into the space we have $\delta u=0$ on $\Gamma_D^u$ and thus, for given Dirichlet data $\widetilde{u}$, we obtain the strong form
	\begin{align}
	 -2\mue\di(\nabla u - \bm{\zeta}) &= f & &\text{ in } \Omega \, ,  \notag \\
	 u &= \widetilde{u}&& \text{ on } \Gamma^u_D\, ,  \label{eq:der1}\\
	 \langle \nabla u , \, \bm{\nu} \rangle &= \langle \bm{\zeta} , \, \bm{\nu} \rangle &&  \text{ on } \Gamma^u_N\,. \notag
	\end{align}  
For the second weak form \cref{eq:3} we employ another Green identity
\begin{align}
	\int_{\Omega}  \lambda   \, \curl \vb{q}  \, \dd X = \oint_{\partial \Omega} \langle  \lambda \, \vb{q} , \, \bm{\tau} \rangle \, \dd s + \int_{\Omega} \langle \Dc \lambda, \, \vb{q} \rangle \, \dd X \, , \quad \lambda \in  \C^1(\Omega,\mathbb{R}) \, , \quad \vb{q} \in \C^1(\Omega,\mathbb{R}^2) \, , \label{eq:gr2}
\end{align}
and split the boundary, obtaining for all $\delta \bm{\zeta} \in \C^1(\Omega,\mathbb{R}^2)$ 
\begin{align}
	\int_{\Omega}  2\mue \, & \langle (\nabla u - \bm{\zeta}) , \, (-\delta \bm{\zeta}) \rangle + 2\mumi \,  \langle \bm{\zeta} , \, \delta \bm{\zeta} \rangle + \muma \, \Lc ^2 \, \langle \curl\bm{\zeta} , \, \curl\delta \bm{\zeta} \rangle - \langle \delta \bm{\zeta} , \, \bm{\omega} \rangle \,  \dd X \notag \\& = \int_{\Omega}  2\mue \, \langle (\nabla u - \bm{\zeta}) , \, (-\delta \bm{\zeta}) \rangle  + 2\mumi \,  \langle \bm{\zeta} , \, \delta \bm{\zeta} \rangle \, + \muma \, \Lc^2 \langle  \Dc (\curl \bm{\zeta}) , \, \delta \bm{\zeta} \rangle \,  - \langle \delta \bm{\zeta} , \, \bm{\omega} \rangle \, \dd X  \notag \\&  \quad + \int_{\Gamma_D^\zeta} \muma \, \Lc^2 \, \curl( \bm{\zeta}) \langle \delta \bm{\zeta} , \, \bm{\tau} \rangle \, \dd s +  \int_{\Gamma_N^\zeta} \muma \, \Lc^2 \, \curl( \bm{\zeta}) \langle \delta \bm{\zeta} , \, \bm{\tau} \rangle \, \dd s = 0  \, .
\end{align}
Again, the Dirichlet data is incorporated into the space, such that the following strong formulation arises
	\begin{align}
	 -2\mue (\nabla u - \bm{\zeta}) + 2\mumi \bm{\zeta} + \muma \Lc^2 \, \Dc(  \curl\bm{\zeta} )  &=  \bm{\omega} &&\text{ in } \Omega  \, , \notag \\
	 \curl\bm{\zeta}&=0 && \text{ on }\Gamma_N^\zeta \,, \label{eq:der2} \\
	 \langle \bm{\zeta} , \, \bm{\tau} \rangle &= \langle \nabla \widetilde{\bm{\zeta}} , \, \bm{\tau} \rangle && \text{ on } \Gamma^\zeta_D \,. \notag
	\end{align}  
The complete boundary value problem is given by \cref{eq:der1,eq:der2}.

\section{Constructing analytical solutions} \label{ap:A}
The predefined fields are given by $\widetilde{u}$ and $\widetilde{\bm{\zeta}}$.
	We redefine the variables of the strong form $u^* = u - \widetilde{u}$ and $\bm{\zeta}^* = \bm{\zeta} - \widetilde{\bm{\zeta}}$ and insert them into the partial differential equation
	\begin{subequations}
		\begin{align}
		-2\mue\di(\nabla(u-\widetilde{u}) - (\bm{\zeta}-\widetilde{\bm{\zeta}})) &= 0 \, , \\[2ex]
		-2\mue (\nabla(u-\widetilde{u}) - (\bm{\zeta}-\widetilde{\bm{\zeta}})) + 2\mumi (\bm{\zeta}-\widetilde{\bm{\zeta}}) 
		+ \muma \Lc^2   \, \Dc \curl(\bm{\zeta}-\widetilde{\bm{\zeta}})  &= 0 \, , 
		\end{align}
	\end{subequations}
yielding compositions of additive terms. Therefore, we can rearrange the equations
\begin{subequations}
	\begin{align}
	2\mue\di(\nabla u - \bm{\zeta}) &=  2\mue\di(\nabla \widetilde{u} - \widetilde{\bm{\zeta}}) \, ,
	\\[2ex]
	-2\mue (\nabla u - \bm{\zeta}) + 2\mumi \bm{\zeta} + \muma \Lc^2  \, \Dc (\curl\bm{\zeta}) 
	&= -2\mue (\nabla \widetilde{u} - \widetilde{\bm{\zeta}}) + 2\mumi \widetilde{\bm{\zeta}} + \muma \Lc^2  \, \Dc (\curl\widetilde{\bm{\zeta}})  \, . \notag
	\end{align}
\end{subequations}
It is clear that the solutions of the PDE must be $u = \widetilde{u}$ and $\bm{\zeta} = \widetilde{\bm{\zeta}}$.
Since both $\widetilde{u}$ and $\widetilde{\bm{\zeta}}$ are known a priori, their insertion in the PDE can be calculated. We define the calculated fields
\begin{align}
	f &:= -2\mue\di(\nabla \widetilde{u} - \widetilde{\bm{\zeta}}) \, , \notag \\[2ex]
	\bm{\omega} &:= -2\mue (\nabla \widetilde{u} - \widetilde{\bm{\zeta}}) + 2\mumi \widetilde{\bm{\zeta}} + \muma \Lc^2  \, \Dc (\curl\widetilde{\bm{\zeta}}) \, .
\end{align}
The strong forms with the newly found right-hand sides are multiplied with the corresponding test functions
\begin{subequations}
	\begin{align}
	&\int_{\Omega}  2\mue \langle \di(\nabla u - \bm{\zeta}) , \, \delta u \rangle  \, \dd X = - \int_{\Omega} \langle f , \, \delta u \rangle \, \dd X \, , \\[2ex]
	&\int_{\Omega}  -2\mue \langle (\nabla u - \bm{\zeta}) , \, \delta \bm{\zeta} \rangle + 2\mumi \langle \bm{\zeta} , \, \delta \bm{\zeta} \rangle + \muma \Lc^2 \langle \Dc (\curl\bm{\zeta})  , \,  \delta \bm{\zeta} \rangle \, \dd X = \int_{\Omega} \langle \bm{\omega} , \, \delta \bm{\zeta} \rangle \, \dd X  \, .
	\end{align}
\end{subequations}
Employing Greens' identities \cref{eq:gr1,eq:gr2} we find
\begin{subequations}
	\begin{align}
	&\oint_{\partial \Omega} 2\mue \, \delta u \langle (\nabla u - \bm{\zeta}) , \, \bm{\nu} \rangle \, \dd s - \int_{\Omega} 2 \mue \langle (\nabla u - \bm{\zeta}) , \, \nabla \delta u \rangle \, \dd X  = - \int_{\Omega} \langle f , \, \delta u \rangle \, \dd X \, , \\[2ex]
	&\int_{\Omega} -2\mue  \langle (\nabla u - \bm{\zeta}) , \, \delta \bm{\zeta} \rangle + 2\mumi \, \langle \bm{\zeta}  , \, \delta \bm{\zeta} \rangle + \muma \, \Lc ^2 \, \langle \curl\bm{\zeta} , \,  \curl\delta \bm{\zeta} \rangle \, \dd X 
	- \muma \Lc^2 \,\oint_{\partial \Omega}  \curl\bm{\zeta}  \langle \delta \bm{\zeta} , \, \bm{\tau} \rangle \, \dd s \notag \\ 
	& \qquad \qquad \qquad \qquad \qquad \quad \qquad \qquad \qquad \qquad \qquad \qquad \qquad = \int_{\Omega} \langle \bm{\omega} , \, \delta \bm{\zeta} \rangle \, \dd X  \, .
	\end{align}
\end{subequations}
The latter integrations generate terms for transmissions on the boundary $\partial \Omega$. As the Dirichlet data is directly incorporated into the space and the natural Neumann boundary conditions \cref{eq:neumann,eq:cur} hold, we observe 
\begin{align}
	\int_{\Gamma_N^u} 2\mue \, \delta u \langle (\nabla u - \bm{\zeta}) , \, \bm{\nu} \rangle \, \dd s = 0 \, , \quad \int_{\Gamma_N^\zeta}  \curl\bm{\zeta}  \langle \delta \bm{\zeta} , \, \bm{\tau} \rangle \, \dd s = 0 \, ,
\end{align} 
allowing us to find the original weak formulation with the corresponding force and moment
\begin{align}
	&\int_{\Omega} 2\mue \langle (\nabla u - \bm{\zeta}) , \, \nabla \delta u \rangle \, \dd X = \int_{\Omega} \langle f , \, \delta u \rangle \, \dd X \, , \\[2ex]
	&\int_{\Omega}  - 2\mue  \langle (\nabla u - \bm{\zeta}) , \, \delta \bm{\zeta} \rangle + 2\mumi \, \langle \bm{\zeta} , \, \delta \bm{\zeta} \rangle \nonumber  + \muma \, \Lc ^2 \, \langle \curl\bm{\zeta} , \, \curl\delta \bm{\zeta} \rangle \, \dd X  = \int_{\Omega} \langle \bm{\omega} , \, \delta \bm{\zeta} \rangle \, \dd X  \, .
\end{align}

\end{document}